\pgfplotsset{compat=1.16}
\newcommand\tr{{\operatorname{tr}}}
\newcommand\diag{{\operatorname{diag}}}
\newcommand\E{{\mathbb{E}}}
\newcommand\C{{\mathbb{C}}}
\renewcommand\v[1]{{\boldsymbol{#1}}}
\newcommand{\R}[0]{\mathbb{R}}
\newtheorem{theorem}{Theorem}[section]
\newtheorem{lemma}[theorem]{Lemma}
\newtheorem{corollary}[theorem]{Corollary}
\newtheorem{definition}[theorem]{Definition}
\newtheorem{remark}[theorem]{Remark}
\numberwithin{equation}{section}
\newcommand{\cmark}{\textcolor{green!80!black}{\ding{51}}}%
\newcommand{\xmark}{\textcolor{red}{\ding{55}}}%
\let\origtop\top
\renewcommand\top{{\scriptscriptstyle{\origtop}}} 
\DeclareMathOperator*{\argmin}{\mathop{\mathrm{argmin}}}
\def\diag{\mathrm{diag}}
\def\rank{\mathrm{rank}}
\def\DPP{\mathrm{DPP}}
\def\RDPP{\mathrm{R\textnormal{-}DPP}}
\def\x{{\mathbf x}}
\def\z{{\mathbf z}}
\def\w{{\mathbf w}}
\def\m{{\mathbf m}}
\def\e{{\mathbf e}}
\def\v{{\mathbf v}}
\def\u{{\mathbf u}}
\def\r{{\mathbf r}}
\def\y{{\mathbf y}}
\def\reg{{\lambda}}
\def\Sig{{\mathbf{\Sigma}}}
\def\range{{\mathrm{range}}}
\def\b{{\mathbf b}}
\def\a{{\mathbf a}}
\def\A{{\mathbf A}}
\def\B{{\mathbf B}}
\def\Lb{{\mathbf L}}
\def\H{{\mathbf H}}
\def\I{{\mathbf I}}
\def\Q{{\mathbf Q}}
\def\C{{\mathcal C}}
\def\Bc{{\mathcal B}}
\def\Dc{{\mathcal D}}
\def\Uc{{\mathcal U}}
\def\mPhi{{\mathbf \Phi}}
\def\mPi{{\mathbf \Pi}}
\def\M{{\mathbf M}}
\def\Z{{\mathbf Z}}
\def\D{{\mathbf D}}
\def\SymFHT{{\mathrm{SymFHT}}}
\def\FHT{{\mathrm{FHT}}}
\def\setS{S}
\def\Ec{{\mathcal E}}
\def\V{{\mathbf V}}
\def\P{{\mathbf P}}
\def\E{{\mathds E}}
\def\R{{\mathds R}}
\def\Rb{{\mathbf R}}
\def\U{{\mathbf U}}
\renewcommand{\C}{\mathbf{C}}
\def\tr{\mathrm{tr}}
\def\alg{%
\text{Kaczmarz\raisebox{0.325ex} {\relscale{0.75}++}}\xspace}
\def\algpd{{CD\raisebox{0.325ex} {\relscale{0.75}++}}\xspace}
\def\algpdaccel{{CD\raisebox{0.325ex} {\relscale{0.75}+}}Accel\xspace}
\def\algshort{%
\texttt{K++}\xspace}
\def\algmemo{%
\texttt{K++ w/o Accel}\xspace}
\def\algaccel{%
\texttt{K++ w/o Memo}\xspace}
\title{Randomized Kaczmarz Methods with Beyond-Krylov Convergence\thanks{This work was funded by NSF grant CCF-2338655 (MD and JY), NSF grant DMS-2408912 (DN), and NSF grant DMS-2309685 (ER).}}
\author{
Micha{\l} Derezi\'nski\thanks{University of Michigan (\texttt{derezin@umich.edu})}
\quad
Deanna Needell\thanks{University of California, Los Angeles (\texttt{deanna@math.ucla.edu})}
\quad
Elizaveta Rebrova\thanks{Princeton University (\texttt{elre@princeton.edu})}
\quad
Jiaming Yang\thanks{University of Michigan (\texttt{jiamyang@umich.edu})}}
\begin{document}

\maketitle

\begin{abstract}
Randomized Kaczmarz methods form a family of linear system solvers which converge by repeatedly projecting their iterates onto randomly sampled equations. While effective in some contexts, such as highly over-determined least squares, Kaczmarz methods are traditionally deemed secondary to Krylov subspace methods, since this latter family of solvers can exploit~outliers in the input's singular value distribution to attain fast convergence on ill-conditioned~systems. 

In this paper, we introduce \alg, an accelerated randomized block Kaczmarz algorithm that exploits outlying singular values in the input to attain a fast Krylov-style convergence. Moreover, we show that \alg\ captures large outlying singular values provably faster than popular Krylov methods, for both over- and under-determined systems. We also develop an optimized variant for positive semidefinite systems, called \algpd, demonstrating empirically that it is competitive in arithmetic operations with both CG and GMRES on a collection of benchmark problems. To attain these results, we introduce several novel algorithmic improvements to the Kaczmarz framework, including adaptive momentum acceleration, Tikhonov-regularized projections, and a memoization scheme for reusing information from previously sampled equation~blocks.
\end{abstract}

\newpage
\tableofcontents
\newpage

\section{Introduction}

The Kaczmarz method \cite{Kac37:Angenaeherte-Aufloesung} is an iterative algorithm for solving large linear systems of equations, which has found many applications \cite{natterer2001mathematics,feichtinger1992new,herman1993algebraic} due to its simple and memory-efficient updates that operate on a single equation at a time. Numerous variants of this method have been studied, most notably incorporating randomized equation selection to enable rigorous convergence analysis (Randomized Kaczmarz, \cite{SV09:Randomized-Kaczmarz}), as well as block updates \cite{elfving1980block,needell2014paved} that operate on multiple equations at a time to better balance memory and computations. Notably, Randomized Kaczmarz can be viewed as an instance of Stochastic Gradient Descent (SGD), and this connection has led to weighted sampling schemes for
SGD \cite{needell2014stochastic}.

Kaczmarz(-type) methods have proven effective when the linear system is highly over-determined and the computing environment restricts access to the input data, thus benefiting from their cheap and localized updates. However, outside of these considerations, Krylov subspace methods such as Conjugate Gradient (CG) \cite{hestenes1952methods}, LSQR \cite{paige1982lsqr}, and GMRES \cite{saad1986gmres} typically appear (theoretically) superior to Kaczmarz methods on account of their ability to exploit outliers and clusters in the input's singular value distribution, attaining fast convergence even for some highly ill-conditioned systems (e.g., see Chapter 5 of \cite{nocedal1999numerical}).
In this work, we re-examine this assertion, developing Kaczmarz methods that can similarly exploit outlying singular values in the input to achieve fast convergence, going even beyond what Krylov subspace methods can attain for a natural class of singular value distributions. Crucially, our proposed methods do not require the systems to be very tall or even over-determined to work well.

To achieve this beyond-Krylov convergence, we develop \alg, a randomized block Kaczmarz method that incorporates several novel algorithmic techniques: adaptive acceleration, regularized projections, and block memoization. 
Illustrating our claims, let us focus first on solving a square $n\times n$ linear system $\A\x=\b$ for $\A\in\R^{n\times n}$ and $\b\in\R^n$, but extensions to rectangular under- and over-determined systems are provided in the later sections. Given $1\leq k\leq n$, we show that \alg\ with block size proportional to $k$ solves such a system to within $\epsilon$ error in:
\begin{align}
\text{(\alg, Thm.~\ref{thm:main})}\qquad
\underbrace{\tilde O\big(n^2+nk^2\big)}_{\text{Phase 1}}\ +\ 
\underbrace{\tilde O\big(n^2\bar\kappa_k\log1/\epsilon\big)}_{\text{Phase 2}}\quad\text{operations},\label{eq:intro-kaczmarz}
\end{align}
where $\tilde O$ hides logarithmic factors described in detail alongside the theorem, while $\bar\kappa_k$ is the normalized Demmel condition number of the matrix $\A$ excluding the top-$k$ part of its singular value decomposition (SVD):
\begin{align}\label{eq:kappa_k_def}
\bar\kappa_k:=\bar\kappa(\A-\A_k),\qquad\text{for}\qquad \bar\kappa(\M) := \|\M\|_F\|\M^\dagger\|/\sqrt{\rank(\M)}.
\end{align}
Here, $\A_k=\sum_{i=1}^k\sigma_i\u_i\v_i^\top$ denotes the top-$k$ part of $\A$'s SVD, and $\|\cdot\|_F$ is the matrix Frobenius norm. Demmel condition number $\|\M\|_F\|\M^\dagger\| = \|\M\|_F/\sigma_{\min}^+(\M)$ is often used to describe the convergence rate of Kaczmarz methods, starting from \cite{SV09:Randomized-Kaczmarz}, and when normalized by $\sqrt{\rank(\M)}$, it is always upper-bounded by the classical condition number $\kappa(\M):=\|\M\|\|\M^\dagger\|$.

As suggested by \eqref{eq:intro-kaczmarz}, \alg\ exhibits two phases of convergence: In the first phase, the algorithm implicitly \emph{captures the top-$k$ part of $\A$'s spectrum} via our block memoization scheme that requires only $\tilde O(nk)$ additional memory. Then, in the second phase, it uses this information (along with our adaptive acceleration scheme) to attain a convergence rate that is \emph{independent of the top-$k$ singular values of $\A$}.

To put this result in context, consider a comparable convergence guarantee achievable by a Krylov subspace method (such as LSQR) for solving a dense $n\times n$ ill-conditioned linear system with $k$ large outlying singular values. Such a method is also expected to exhibit two phases of convergence,
where the first phase builds the Krylov subspace that captures the top-$k$ part of $\A$'s SVD, while the second phase leverages it to attain fast convergence, reaching $\epsilon$ error after:
\begin{align}
\text{(Krylov, e.g., \cite{axelsson1986rate})}\qquad \underbrace{O\big(n^2k\big)}_{\text{Phase 1}} \ +\  \underbrace{O\big(n^2\kappa_k\log1/\epsilon\big)}_{\text{Phase 2}}\quad\text{operations}.
\label{eq:intro-krylov}
\end{align}
Here, $\kappa_k\!:=\!\kappa(\A-\A_k)$ is the condition number of $\A$ excluding its top-$k$ singular~values. We note that \eqref{eq:intro-krylov} is obtained in exact arithmetic, and in practice, tends to require re-orthogonalization of the Krylov basis (as we observe empirically in Section~\ref{s:experiments}). 
Also, since our theoretical analysis of \alg\ is limited to dense matrices, we focus on this setting here. Naturally, Krylov methods can attain much lower cost in the sparse setting (as can some variants of Kaczmarz, see below for further considerations).

There are two key differences between the guarantees \eqref{eq:intro-kaczmarz} and \eqref{eq:intro-krylov}. First, the fast convergence of \alg\ relies on $\bar\kappa_k$, which can be substantially smaller than~$\kappa_k$. Second, and more importantly, the first phase of \alg\ takes only $\tilde O(n^2+nk^2)$ time, which can be much more efficient than the $O(n^2k)$ first phase of Krylov. 
This is because building the Krylov subspace for the outlying singular values requires at least $k$ matrix-vector products with the full matrix~$\A$. Lower bounds show that this cost is necessary for any algorithm based solely on matrix-vector products \cite{derezinski2024fine} (including all Krylov methods) when solving systems with $k$ large outlying singular values. On the other hand, in its initial phase, \alg\ leverages direct access to $\A$ by iterating over $\tilde O(n/k)$ blocks of $\tilde O(k)$ rows each, a computational equivalent of only a few matrix-vector products. This benefit of \alg\ is significant when $k$ is sufficiently larger than a logarithmic power of $n$ and sufficiently smaller~than~$n$.

Naturally, the above guarantees do not provide a complete convergence comparison between \alg\ and Krylov subspace methods (e.g., they do not account for \textit{small} outlying singular values). However, they indicate that Kaczmarz methods can work well on ill-conditioned problems and are competitive with Krylov solvers in terms of arithmetic operations even for square linear systems, and not only for highly over-determined ones, as is often suggested. To verify these claims empirically, we develop a practical implementation of \alg, performing an ablation study on ill-conditioned synthetic matrices with large outlying singular values. We also develop another variant of our algorithm which is specifically optimized for positive semidefinite systems (\algpd, Algorithm \ref{alg:bcd}) and enjoys an improved convergence that scales with $\sqrt{\bar\kappa_k}$ instead of $\bar\kappa_k$ (see Theorem \ref{t:cd++}). We test \algpd\ on a collection of benchmark positive definite problems from the machine learning literature \cite{OpenML2013,scikit-learn}, which are known to exhibit large outlying eigenvalues. These experiments confirm our theoretical findings, showing that Kaczmarz methods can be competitive in floating point operations with both CG and GMRES on a range of input matrices.

\paragraph{\textbf{Main contributions.}} 
As part of \alg, we introduce several novel algorithmic techniques to the broader Kaczmarz toolbox, which are crucial both for the convergence analysis and the numerical performance.
\begin{enumerate}
    \item \textit{Adaptive acceleration:} We propose a new way of introducing Nesterov's momentum into Kaczmarz updates, which is both stable with respect to its hyper-parameters and can be adaptively tuned during runtime (Section \ref{s:reformulation}).
    \item \textit{Regularized projections:} We add Tikhonov regularization to the classical Kaczmarz projection steps, showing that it not only makes them better-conditioned, but also reduces the variance coming from randomization, enabling our convergence analysis (Section \ref{s:rate}).
    \item \textit{Block memoization:} Our algorithm saves and reuses small Cholesky factors associated with the sampled equation blocks, thereby speeding up subsequent iterations in the second phase of the convergence, while at the same time maintaining a low memory footprint (Section \ref{s:blocks}).
    \item \textit{Symmetric Hadamard transform:} A key step in our algorithm is to preprocess the linear system with a randomized Hadamard transform. As an auxiliary result, we give a new recursive scheme for applying the Hadamard transform to symmetric matrices which reduces arithmetic operations by half (Section~\ref{s:practical}).
\end{enumerate}

\paragraph{\textbf{Further computational considerations and limitations.}}
While we focus our computational analysis on floating point operations, this metric may not fully capture the true running time, particularly when we wish to exploit sparsity in the input or parallelization in the hardware. Krylov subspace methods benefit from relying primarily on full matrix-vector product operations with $\A$ and can attain much faster performance, particularly for sparse and structured matrices.
Below, we discuss how these considerations may affect the performance of \alg\ (and \algpd).
\begin{enumerate}
    \item \textit{Parallelization:} While Kaczmarz methods often operate on single rows of the matrix at a time, \alg\ is specifically optimized for relying on large blocks of $\tilde O(k)$ rows. Since its dominant operation is typically an $ \tilde O(k) \times n$ matrix-vector product, choosing a large $k$ not only improves the conditioning via $\bar\kappa_k$ but also helps the method take advantage of hardware parallelization.
    \item \textit{Sparsity:} A limitation of our theoretical analysis is that it only applies to solving dense linear systems. This is because randomized Hadamard  preprocessing, which ensures an incoherence property that is needed for our theory, does not preserve the sparsity of the input. In Section~\ref{s:experiments}, we show empirically  that in some cases our methods still perform well without this step, since the input matrix $\A$ may already satisfy the needed incoherence property to begin with. Under these conditions, \alg\ can be implemented to run in time $\tilde O(nk^2+ \mathrm{nnz}(\A)\bar\kappa_k\log1/\epsilon)$, where $\mathrm{nnz}(\A)$ is the number of non-zeros in $\A$. In comparison, the cost of Krylov with orthogonalization to attain the same guarantee is $\tilde O(nT^2 + \mathrm{nnz}(\A) T)$ for $T=O(k+\kappa_k\log1/\epsilon)$. Nevertheless, future work (both theoretical and empirical) could assess the effectiveness of our algorithms on sparse problems.
    \item \textit{Parameter tuning:}
    The theoretically analyzed variant of \alg\ relies on hyper-parameters controlling momentum acceleration and block memoization. In the implemented variants of \alg\ and \algpd, we use adaptive tuning to find these parameters during runtime. However, our algorithms still require correct selection of the block size (which is proportional to~$k$) so that they can take full advantage of our guarantee \eqref{eq:intro-kaczmarz}. This stands in contrast to the Krylov guarantee \eqref{eq:intro-krylov} which holds for all $k$ simultaneously.
\end{enumerate}

\subsection{Overview of the Main Algorithm}\label{s:alg}
In this section, we motivate and derive our \alg\ algorithm, describing how the simultaneous use of fast preprocessing, adaptive acceleration,  regularized projections, and block memoization
enable it to achieve the claimed convergence guarantees. 

Consider solving a consistent linear system with $m$ equations, $\A\x=\b$, where $\A\in\R^{m\times n}$ and $\b\in\R^{m}$. The classical block Kaczmarz method constructs a sequence of iterates $\x_0, \x_1,...$ by repeatedly choosing a subset $S=S(t)\subseteq \{1,...,m\}=:[m]$ of those equations, and then projecting the current iterate $\x_t$ onto the subspace of solutions of those equations, i.e., the under-determined system $\A_{S}\x=\b_S$. This leads to the block Kaczmarz update which can be stated as follows:
\begin{align*}
    \x_{t+1} = \argmin_{\x:\,\A_{S}\x=\b_{S}}\!\!\|\x-\x_t\|^2\ =\ \x_t - \A_S^\dagger(\A_S\x_t-\b_S).
\end{align*}

\paragraph{\textbf{Randomized preprocessing.}} Effective selection of the subset $S$ in each iteration is crucial for obtaining fast convergence of the block Kaczmarz method, and randomization has been suggested as an effective strategy to diversify the selection process. Here, one approach, following the original Randomized Kaczmarz method~\cite{SV09:Randomized-Kaczmarz}, is to use importance sampling that emphasizes equations with large row norms. However, it has proven difficult to characterize the correct importance weights that ensure provably fast convergence of block Kaczmarz. So, we opt for a different strategy: preprocessing the linear system using a Randomized Hadamard Transform \cite{ailon2009fast,tropp2011improved}.
\begin{definition}\label{d:rht}
    An $m\times m$ randomized Hadamard transform 
    (RHT) is a matrix $\Q=\H\D$, where $\H$ is the Hadamard matrix and $\D$ is an $m\times m$ diagonal matrix with random $\pm1/\sqrt m$ entries. Applying $\Q$ to a vector takes $m\log m$ arithmetic operations.
\end{definition}
We note that the Hadamard matrix, similarly to a Discrete Fourier Transform (DFT), is a scaled orthogonal matrix (specifically, $\H^\top\H=m\I_m$) that admits fast matrix-vector multiply  (see Appendix \ref{s:symfht} for a detailed discussion).
In fact, these are the only two properties we use in our analysis, and one could replace Hadamard with a DFT or other fast transforms.

Since $\Q^\top\Q=\I$, transforming the system $\A\x=\b$ into $\Q\A\x=\Q\b$ does not affect the solution of the system nor does it affect the singular value distribution of the input matrix. However, crucially, it ensures that all equations become roughly equally important (this is known as \emph{incoherence}), which allows us to select a representative subset $S$ uniformly at random. Using the fast matrix-vector multiply, this transformation can be done using $mn\log m$ arithmetic operations. However, the resulting matrix $\Q\A$ does not retain the structural properties of $\A$ such as its sparsity pattern. Thus, while important for parts of our theoretical analysis, the RHT may be skipped when the input matrix is sparse and naturally exhibits an incoherence property \cite{needell2014paved}.

\paragraph{\textbf{Regularized projections.}}
Even after preprocessing with the RHT (and especially if this is omitted), a randomly selected sub-matrix $\A_S$ may be poorly conditioned, which adversely affects the performance of block Kaczmarz, especially if one chooses to solve the block system using an iterative method. To guard against this, instead of the true projection step, we consider a \emph{regularized} projection, defined as the following regularized least squares problem:
\begin{align}
    \x_{t+1} &= \argmin_{\x\in\R^n}\Big\{\|\A_S\x - \b_S\|^2 + \reg\|\x-\x_t\|^2\Big\}\nonumber
    \\
    &= \x_t - \A_S^\top(\A_S\A_S^\top+\reg\I)^{-1}
    (\A_S\x_t-\b_S) =: \x_t - \w_t.\label{eq:bk}
\end{align}
Note that by letting $\reg=0$, this formulation recovers standard block Kaczmarz, however a positive $\reg$ ensures that the sub-problem being solved is not too ill-conditioned. This plays a crucial role in the convergence analysis of our method (see Section \ref{s:rate}), and it also improves the stability of solving the sub-problem (see Section \ref{s:blocks}). 

\paragraph{\textbf{Block memoization.}} 
Even with the regularization, the cost of the projection step in each iteration of \alg\ is still a substantial computational bottleneck, as it requires computing or approximately applying the inverse of $\A_S\A_S^\top+\lambda\I$, e.g., via its Cholesky factor, $\Rb=\mathrm{chol}(\A_S\A_S^\top+\lambda\I)$. In Section \ref{s:fast-projection}, we make the projection steps even more efficient by computing an approximation of the Cholesky factor, $\tilde\Rb\approx \Rb$, and combining this with an inner solver.
Finally, to further amortize these costs over the entire convergence run of the algorithm, we store and reuse the Cholesky factors computed in early iterations via what we refer to as \emph{block memoization}. 

To enable block memoization, it is crucial that the algorithm draws its blocks 
 from a small collection $\Bc$ of previously sampled block sets, so that we can reuse a previously computed Cholesky factor $\tilde\Rb[S]$ for a set $S\in\Bc$. However, this comes with a trade-off: we should expect the convergence rate attained by the algorithm to get worse as we restrict the method to a smaller collection of blocks. Fortunately, we show in Section \ref{s:block-memoization} that when using blocks of size $\tilde O(k)$, it suffices to sample a collection $\Bc$ of $\tilde O(\frac mk)$ random blocks, which can then be continually reused while retaining the same convergence guarantees as if we sampled fresh random blocks at every step. This scheme requires only $\tilde O(mk)$ additional memory for storing the Cholesky factors.

\paragraph{\textbf{Adaptive acceleration.}} A key limitation of the Kaczmarz update is that it does not accumulate any information about the trajectory of its convergence, which could be used to accelerate it. This stands in contrast to, for instance, Krylov methods such as CG, as well as momentum-based methods such as accelerated gradient descent (AGD), which use the information from all previous update directions to construct the next step. To address this, we develop a new way of introducing momentum into the block Kaczmarz update through a careful reformulation of Nesterov's momentum that is both theoretically principled and practically effective.

To explain how we accelerate block Kaczmarz using momentum, we first describe the AGD algorithm, following Nesterov \cite{nesterov2013introductory}. In the context of solving a linear system, AGD can be viewed as minimizing the convex quadratic $f(\x) = \frac12\x^\top\A^\top\A\x - \x^\top\A^\top\b$, via the iterative update $\x_{t+1} = \x_t - \w_t + \m_{t+1}$, where $\w_t=\alpha\nabla f(\x_t)$ is the gradient step, and $\m_t$ is the momentum step:
\begin{align}
    \m_{t+1} = \frac{1-\rho}{1+\rho}(\m_t - \w_t),\qquad\rho\in[0,1].\label{eq:momentum}
\end{align}
We note that the above scheme is precisely the AGD scheme (2.2.11) in \cite{nesterov2013introductory}, obtained by substituting their $y_k$ with our $\x_t$, initializing $\m_0=\mathbf{0}$, and setting $\alpha=1/L$, $\rho=\sqrt{\mu/L}$ using their $\mu$ and $L$. Here, parameter $\rho$ 
is also the theoretical convergence rate of AGD: one can show that $\|\x_t-\x^*\|^2 \leq C(1-\rho)^t\|\x_0-\x^*\|^2$ for an appropriate problem-dependent $C>0$ (Theorem 2.2.3 in \cite{nesterov2013introductory}). 

Based on these observations, we replicate the above acceleration procedure for block Kaczmarz, by setting $\w_t$ in the momentum recursion \eqref{eq:momentum} to be the regularized projection step \eqref{eq:bk}, and adjusting $\rho$ to be the target convergence rate of our method. However, this does not fully take into account the stochasticity of the block Kaczmarz update, which operates only on a fraction of the matrix $\A$ at a time. This forces us to curb the momentum step further, by introducing an additional step size $\eta$, which should be proportional to the ratio between the block size and the rank of $\A$:
\begin{align*}
    \x_{t+1} = \x_t - \w_t + \eta\,\m_{t+1},\qquad\eta\in[0,1].
\end{align*}

In Section \ref{s:reformulation}, we show that the above accelerated update is remarkably stable with respect to both $\eta$ and $\rho$. Further, motivated by this analysis, in Section \ref{s:practical} we propose an adaptive scheme that periodically updates $\rho$ at runtime using the current estimate of the convergence rate of the algorithm. We observe that this mechanism exhibits a self-correcting feedback loop that quickly arrives at a near-optimal convergence.

Combining the above ideas, we obtain \alg\ (Algorithm \ref{alg:main}). Building on this, in Section \ref{s:practical} we propose \algpd\ (Algorithm \ref{alg:bcd}), a coordinate descent-type algorithm derived out of \alg, which is optimized for positive definite systems.

\begin{algorithm}[!ht] 
\caption{\alg}
\label{alg:main}
\begin{algorithmic}[1]
\State \textbf{Input: } $\A\in\R^{m\times n}$, $\b\in\R^m$, block size $s$, iterate $\x_0$, parameters $B$, $\reg$, $\rho$, $\eta$;
\State Initialize $\m_0 \leftarrow \mathbf{0}$;
\State Compute $\A \leftarrow \Q\A$ and $\b \leftarrow\Q\b$; \Comment{{\footnotesize Preprocessing with RHT $\Q$.}} 
\State Sample $\Bc\leftarrow\{S_1,S_2,...,S_B\}$ where $S_i\sim {[m]\choose s}$; \Comment{{\footnotesize Prepare $B$ index subsets.}}
  \For{$t = 0, 1, \ldots$}
\State Draw a random $S$ from $\Bc$;  \label{line:blocks}
\State \textbf{if} $\tilde\Rb[S]=\text{null}$ \ \textbf{then} \ $\tilde\Rb[S] \approx \text{chol}(\A_S\A_S^\top+\lambda\I)$;
\Comment{{\footnotesize Save Cholesky factor.}}
\State $\tilde\w_t \approx \A_S^\top(\A_S\A_S^\top+\lambda\I)^{-1}
    (\A_S\x_t-\b_S)$ using $\tilde\Rb[S]$;  \Comment{{\footnotesize Regularized projection.}}\label{line-w}
\State $\m_{t+1} \leftarrow \frac{1-\rho}{1+\rho}(\m_t - \tilde\w_t)$;
\Comment{{\footnotesize Nesterov momentum.}}\label{line:momentum}
\State $\x_{t+1} \leftarrow \x_t - \tilde\w_t + \eta\,\m_{t+1}$;
\Comment{{\footnotesize \alg\ update.}}\label{line:update}
\State Revise convergence rate estimate $\rho$;
\Comment{{\footnotesize Adaptive acceleration.}}
\EndFor \\
\Return $\tilde\x = \x_t$; \Comment{{\footnotesize Solves $\A \x = \b$.}}
\end{algorithmic}
\end{algorithm}

\subsection{Related work}
\label{s:related}
The block Kaczmarz method was first introduced by \cite{elfving1980block}, motivated by applications in image reconstruction \cite{eggermont1981iterative}. Later, a randomized implementation of block Kaczmarz was developed and analyzed by \cite{needell2014paved}, who showed that under some assumptions on the row-norms and spectral norm of the input matrix, after preprocessing it with a randomized Hadamard transform, a sufficiently large random partition of an $n\times n$ linear system into $n/s$ blocks of size $s$ leads to a convergent block Kaczmarz method. However, even if we ignore their assumptions on the input matrix (which are not needed in our work), those convergence guarantees scale with the squared condition number $\kappa^2(\A)$, and thus do not exploit the singular value distribution as shown in \eqref{eq:intro-kaczmarz}.

An alternative block-construction strategy, first proposed by \cite{gower2015randomized}, is to transform the matrix $\A$ via a random \emph{sketching} matrix $\mPi\in \R^{s\times n}$, so that $\mPi\A$ is no longer a subset of equations, but rather a collection of linear combinations of equations. A simple choice is to use a Gaussian matrix or a sparse random sign matrix $\mPi$. These approaches offer a finer control on the quality of sampled blocks, but at the expense of substantially larger cost, since one must compute a new sketch $\mPi\A$ at every iteration of the algorithm. \cite{rebrova2021block} were the first to characterize the convergence rate of Block Kaczmarz with Gaussian sketches, however their convergence also scales with $\kappa^2(\A)$ and does not exploit outlying singular values. In \cite{rebrova2021block}, a sketch memoization idea  was also proposed, in the form of sampling from a set of precomputed sketches, although they require as many as $\tilde O(n^2)$ precomputed sketches to ensure convergence.

Recently, there has been a number of works suggesting that variants of block Kaczmarz can exploit $k$ large outliers in the singular value distribution, by expressing its convergence in terms of $\bar\kappa_k$ or $\kappa_k$. \cite{derezinski2024sharp} were the first to show this, however due to the large cost of Gaussian sketching and lack of acceleration, their method does not have a computational benefit over existing approaches. Then, \cite{derezinski2023solving} showed that a similar convergence guarantee can be obtained by block Kaczmarz with uniformly sampled blocks, after RHT preprocessing. Their algorithm converges in $\tilde O((n^2 + nk^2)\bar\kappa_k^2\log 1/\epsilon)$ operations. Most recently, \cite{derezinski2024fine} obtained $\tilde O((n^2+nk^2)\kappa_k\log1/\epsilon)$ operations by introducing momentum acceleration via a Nesterov-style scheme of \cite{gower2018accelerated} which is closely related to the momentum acceleration we use in \alg. However, in order to provide a theoretical convergence analysis of their algorithm, \cite{derezinski2024fine} have to rely on a sketching-based block-construction strategy, which is much slower and less practical than uniform sampling. We resolve this theoretical limitation, and are able to use uniform block sampling in \alg, by introducing regularized projections which facilitate our acceleration analysis, as discussed in Section \ref{s:nu}.

An alternative variant of block Kaczmarz that exploits large outlying singular values was recently proposed in~\cite{lok2024subspace}. This algorithm first finds a subsystem spanning the leading subspace of the system, and then projects future iterates onto its solution subspace. However, this method requires some external knowledge about the leading subspace to attain a computational advantage over the other approaches. Several other recently developed Kaczmarz methods \cite{alderman2024randomized,epperly2024randomized,patel2023randomized} introduce acceleration and/or adaptivity, but do not provably exploit large outlying singular values.

Our \alg, which requires $\tilde O(nk^2+n^2\bar\kappa_k\log 1/\epsilon)$ operations to converge, improves on all of these recent prior results in two primary ways: 1) It is the only one to achieve the correct condition number dependence, scaling with $\bar\kappa_k$ as opposed to $\bar\kappa_k^2$ or $\kappa_k$; and 2) It is the only block Kaczmarz method to exhibit two distinct phases of convergence, in the sense that the $\tilde O(n^2+nk^2)$ cost of learning the outlying singular values is not incurred for the entire $\tilde O(\bar\kappa_k\log1/\epsilon)$ length of the convergence. This improvement, a result of our block memoization scheme and its analysis, allows \alg\ to always match or improve upon the Krylov convergence guarantee~\eqref{eq:intro-krylov}. 

\subsection{Notation}
We let $[m]:=\{1,...,m\}$, whereas ${[m]\choose s}$ denotes all size $s$ subsets of $[m]$. For a matrix $\A\in\R^{m\times n}$ and subset $S\in{[m]\choose s}$, we use $\A_S\in\R^{s\times n}$ to denote the submatrix of the rows of $\A$ indexed by $S$, and if $m=n$, then $\A_{S,S}\in\R^{s\times s}$ is the principal submatrix indexed by $S$. We use $\|\A\|$, $\|\A\|_F$, and $\A^\dagger$ to denote the spectral/Frobenius norms of matrix $\A$ as well as its Moore-Penrose pseudoinverse, while $\lambda_{\max}(\A)$ and $\lambda_{\min}^+(\A)$ are the largest and smallest positive eigenvalues of an $n\times n$ positive semidefinite (PSD) matrix, denoted $\A\in\mathcal{S}_n^{+}$. For $\A\in\mathcal{S}_n^{+}$ and $\v\in\R^n$, we use $\|\v\|_{\A} = \sqrt{\v^\top\A\v}$, and for symmetric matrices, $\A\preceq\B$ means that $\B-\A\in\mathcal{S}_n^{+}$. For an event $\mathcal{E}$, we use $\neg \mathcal{E}$ to denote its negation. We use $C>0$ to denote an absolute constant, which may change from line to~line. In particular, throughout the paper we assume that $C \geq 64$ whenever requiring that $k \geq C\log (m/\delta)$ given failure probability $\delta$. This assumption is only for the convenience of selecting other constants.

\subsection{Organization}
In Section \ref{s:reformulation} we perform the convergence analysis of our accelerated Kaczmarz update as a function of the randomized block selection, verifying its stability with respect to the momentum parameters $\rho$ and $\eta$. Then, in Section \ref{s:rate} we characterize the convergence rate under uniform block sampling after RHT, in terms of the regularizer $\lambda$. In Section \ref{s:blocks}, we introduce and analyze block memoization, together with a discussion of the overall computational cost of \alg. Finally, Section~\ref{s:practical} describes our specialized \algpd\ algorithm for positive semidefinite linear systems, and Section~\ref{s:experiments} has numerical experiments. We give conclusions in Section \ref{s:conclusions}.

\section{Stable Convergence with Adaptive Acceleration}
\label{s:reformulation}
In this section, we establish how the convergence of \alg\ (Algorithm~\ref{alg:main}) depends on the properties of the randomized block selection scheme. This guarantee does \emph{not} assume that the system was preprocessed with a Randomized Hadamard Transform. Moreover, it applies to all consistent linear systems, regardless of aspect ratio, and most block sampling~schemes. Main theoretical result of this section is the following theorem:

\begin{theorem}[General convergence rate]\label{lem:converge_accelerate}
 Given $\A\in\R^{m\times n}$ and $\b\in\R^m$, let $\x^*$ be the minimum-norm solution to $\A\x=\b$. Also, let $\mathcal{D}$ be a probability distribution over subsets of $[m]$. Given $\lambda\geq 0$, define the random regularized projection:
\begin{align*}
    \P_{\lambda,S} \coloneqq \A_S^\top(\A_S\A_S^\top+\lambda\I)^{\dagger}\A_S,\quad S\sim\mathcal{D},
\end{align*}
and suppose that $\bar\P_\lambda := \E[\P_{\lambda,S}]$ has the same null space as $\A$. Define the following:
\begin{align}
\mu :=  \mu(\A,\Dc,\reg) &= \lambda_{\min}^+\big(\bar\P_\lambda\big), \nonumber\\
\nu := \nu(\A,\Dc,\reg) &= \lambda_{\max}\Big(\E\big[(\bar\P_\lambda^{\dagger/2}\P_{\lambda,S}\bar\P_\lambda^{\dagger/2})^2\big]\Big), \nonumber \\
\text{and}\quad\bar\rho := \bar\rho(\A,\Dc,\lambda) &= \sqrt{\frac{\mu}{\nu}}.
\label{eq:rho-mu-nu}
\end{align}
Let 
$\rho\in[0,c\bar\rho]$ and $\eta\in [\frac c{\nu},\frac1{2\nu}]$ for some $c\in(0,1/2]$, and suppose that a sequence $\x_t$ is updated as in lines~\ref{line:blocks}-\ref{line:update} of Algorithm~\ref{alg:main}, allowing the regularized projection step (line~\ref{line-w}) to be computed inexactly, with $\tilde \w_t$ so that $\|\tilde\w_t-\w_t\|\leq \frac{\rho^2}{8\eta} \|\x_t-\x^*\|$. Then,
\begin{align*}
\E\Big[\left\|\x_{t} - \x^*\right\|^2\Big] \leq 8\big(1 - \rho/2\big)^t \cdot \|\x_0-\x^*\|^2.
\end{align*}
\end{theorem}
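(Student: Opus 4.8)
The plan is to analyze a suitable Lyapunov (potential) function that simultaneously tracks the iterate error $\|\x_t-\x^*\|^2$ and the momentum term, mimicking the standard estimate-sequence / Lyapunov analysis of Nesterov's accelerated gradient descent, but carried out in expectation over the random block $S$ and with the regularized projection $\P_{\lambda,S}$ playing the role of a stochastic gradient step. Concretely, since $\x^*$ is the minimum-norm solution and $\bar\P_\lambda$ has the same null space as $\A$, I would first reduce to the subspace $\range(\A^\top)=\range(\bar\P_\lambda)$: all iterates differences $\x_t-\x^*$, all steps $\w_t=\P_{\lambda,S}(\x_t-\x^*)$ (using consistency $\A_S\x^*=\b_S$ to rewrite $\A_S\x_t-\b_S=\A_S(\x_t-\x^*)$ so that $\w_t=\P_{\lambda,S}(\x_t-\x^*)$ exactly in the exact case), and the momentum vectors $\m_t$ live in that subspace. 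There it is natural to change coordinates by $\bar\P_\lambda^{\dagger/2}$, so that $\E[\bar\P_\lambda^{\dagger/2}\P_{\lambda,S}\bar\P_\lambda^{\dagger/2}]=\vI$ on the relevant subspace; in these coordinates $\mu$ is the smallest eigenvalue of the ``averaged gradient operator,'' which is now the identity up to the $\mu$-rescaling hidden in the metric, and $\nu$ controls the second moment $\E[(\bar\P_\lambda^{\dagger/2}\P_{\lambda,S}\bar\P_\lambda^{\dagger/2})^2]\preceq\nu\vI$. The key structural facts I would exploit are that $\P_{\lambda,S}$ is symmetric with $\mathbf 0\preceq\P_{\lambda,S}\preceq\I$ (so the step is a contraction-type operator), that $\E[\P_{\lambda,S}]=\bar\P_\lambda\succeq\mu\,\vPi$ where $\vPi$ is the projection onto $\range(\A^\top)$, and the variance bound via $\nu$.

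Next I would define a potential of the form
\[
\Phi_t \;=\; \|\x_t-\x^*\|^2 \;+\; \gamma\,\big\|\m_t - \beta(\x_t-\x^*)\big\|^2
\]
for constants $\gamma,\beta>0$ to be tuned in terms of $\rho,\eta,\nu$ (the precise shape, e.g. whether one uses $\m_t$, $\m_t-\w_t$, or a shifted combination, will be dictated by making the cross terms cancel; I expect $\beta\asymp\rho/\eta$ and $\gamma\asymp\eta^2$). Substituting the update $\x_{t+1}=\x_t-\tilde\w_t+\eta\,\m_{t+1}$ together with $\m_{t+1}=\frac{1-\rho}{1+\rho}(\m_t-\tilde\w_t)$, expanding the squared norms, and taking conditional expectation over $S$, I would collect: (i) a ``descent'' contribution $-2\E\langle\x_t-\x^*,\w_t\rangle=-2(\x_t-\x^*)^\top\bar\P_\lambda(\x_t-\x^*)\le -2\mu\|\x_t-\x^*\|^2$ coming from the gradient step; (ii) a ``variance'' contribution $\E\|\w_t\|^2=(\x_t-\x^*)^\top\E[\P_{\lambda,S}^2](\x_t-\x^*)$, which I would bound using $\P_{\lambda,S}^2\preceq\P_{\lambda,S}$ and, more sharply when needed, by $\nu$ after passing to the $\bar\P_\lambda^{\dagger/2}$ metric; and (iii) momentum cross terms $\langle\m_t,\w_t\rangle$, $\langle\x_t-\x^*,\m_t\rangle$ which are handled by Young's inequality and absorbed into $\Phi$. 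The inexactness $\|\tilde\w_t-\w_t\|\le\frac{\rho^2}{8\eta}\|\x_t-\x^*\|$ enters only through error terms proportional to $\frac{\rho^2}{\eta}\|\x_t-\x^*\|^2$ (after using $\eta\le\frac1{2\nu}$ and $\|\w_t\|\lesssim\|\x_t-\x^*\|$), which is lower-order and can be folded into the slack between $\rho$ and $\rho/2$ in the final rate. The goal of all this bookkeeping is the one-step inequality $\E[\Phi_{t+1}\mid\mathcal F_t]\le(1-\rho/2)\Phi_t$; iterating and using $\Phi_0=\|\x_0-\x^*\|^2$ (since $\m_0=\mathbf 0$ and, with the right $\beta$, the momentum term at $t=0$ is controlled by $\|\x_0-\x^*\|^2$ up to the constant $8$) gives $\E\|\x_t-\x^*\|^2\le\Phi_t\le 8(1-\rho/2)^t\|\x_0-\x^*\|^2$.

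The main obstacle I anticipate is pinning down the momentum cross terms so that the potential genuinely contracts at rate $1-\rho/2$ for the \emph{entire} allowed range $\rho\in[0,c\bar\rho]$ and $\eta\in[\frac c\nu,\frac1{2\nu}]$ — i.e. establishing the claimed robustness to the hyper-parameters. The delicate point is that the contraction factor $\frac{1-\rho}{1+\rho}$ in the momentum recursion is tuned to $\bar\rho=\sqrt{\mu/\nu}$, and one must show that choosing $\rho$ anywhere below $c\bar\rho$ (rather than exactly $\bar\rho$) still yields a valid Lyapunov decrease; this requires the coefficient matching to use inequalities of the form $\rho^2\le c^2\mu/\nu$ and $\eta\nu\in[c,\tfrac12]$ with enough slack. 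I would organize this by treating $\rho$ and $\eta$ symbolically, deriving the sufficient conditions on $(\gamma,\beta)$ as a small system of inequalities, and then verifying feasibility under the stated parameter ranges — the algebra is routine once the right potential is fixed, but guessing that potential (the ``$\beta(\x_t-\x^*)$'' shift in the momentum norm, analogous to the estimate-sequence center in Nesterov's proof) is the crux. A secondary technical nuisance is keeping everything confined to $\range(\A^\top)$ so that pseudoinverses behave like inverses and the null-space-matching hypothesis on $\bar\P_\lambda$ is used correctly; this I would dispatch up front with the projection $\vPi$ and the observation that all relevant vectors stay in that subspace throughout the iteration.
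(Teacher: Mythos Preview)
Your approach is genuinely different from the paper's. Rather than analyzing the momentum recursion $(\x_t,\m_t)$ directly with a single Lyapunov, the paper first establishes an \emph{algebraic equivalence} (Lemma~\ref{lem:equivalence}): it shows that the update of Algorithm~\ref{alg:main} is identical to a three-sequence scheme $(\x_t,\y_t,\v_t)$ of the form \eqref{alg:main_analyze}, with $\m_t=\frac{\beta(1-\alpha)}{\gamma-1}(\v_t-\y_t)$ and parameters $\alpha=\frac{\rho}{1+\rho}$, $\beta=1-\rho$, $\gamma=1+\frac{\eta}{\rho}-\eta$. The parameter ranges $\rho\in[0,c\bar\rho]$, $\eta\in[\frac c\nu,\frac1{2\nu}]$ translate exactly into the conditions $\tilde\mu\le\mu$, $\tilde\nu\ge\nu$ for the surrogate constants $\tilde\nu=\frac{1}{\rho+\eta(1-\rho)}$, $\tilde\mu=\rho^2\tilde\nu$, so the robustness to hyper-parameters that you flag as the ``main obstacle'' falls out of two short inequalities rather than from re-tuning a Lyapunov. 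The one-step contraction is then read off from a Gower--Richt\'arik style analysis (Lemma~\ref{lem:converge_accelerate_form2}) of the \emph{mixed-norm} potential $\Delta_t=\|\v_t-\x^*\|_{\bar\P_\lambda^\dagger}^2+\frac{1}{\tilde\mu}\|\y_t-\x^*\|^2$, and finally $\|\x_t-\x^*\|^2$ is recovered from $\Delta_t$ via $\x_t=\alpha\v_t+(1-\alpha)\y_t$.

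The potential you propose, $\Phi_t=\|\x_t-\x^*\|^2+\gamma\|\m_t-\beta(\x_t-\x^*)\|^2$, is entirely in the Euclidean norm, and this is where I see a real gap. The definition of $\nu$ gives $\E[\P_{\lambda,S}\bar\P_\lambda^\dagger\P_{\lambda,S}]\preceq\nu\,\bar\P_\lambda$, which is exactly what is needed to bound the second-moment term $\|\P_{\lambda,S}(\cdot)\|_{\bar\P_\lambda^\dagger}^2$ arising in $\|\v_{t+1}-\x^*\|_{\bar\P_\lambda^\dagger}^2$; in the paper's analysis this is the term ``$II$'' in the decomposition of $(\r_{t+1}^*)^2$. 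If you stay in the Euclidean norm, the quantity you must control is $\E\|\w_t\|^2=(\x_t-\x^*)^\top\E[\P_{\lambda,S}^2](\x_t-\x^*)$, and $\nu$ does not bound $\E[\P_{\lambda,S}^2]$ in any useful way without the $\bar\P_\lambda^\dagger$ conjugation; using only $\P_{\lambda,S}^2\preceq\P_{\lambda,S}$ loses the acceleration. So either your Lyapunov must carry a $\bar\P_\lambda^\dagger$-norm on the momentum/velocity component (at which point you are effectively rediscovering $\Delta_t$), or the argument will not close at rate $1-\rho/2$. I would recommend you either (i) prove the three-sequence equivalence and then work with $\Delta_t$, or (ii) keep your direct route but replace the second term of $\Phi_t$ by something measured in $\|\cdot\|_{\bar\P_\lambda^\dagger}$; the first option also makes the inexactness bound $\|\tilde\w_t-\w_t\|\le\frac{\rho^2}{8\eta}\|\x_t-\x^*\|=\frac{\tilde\mu}{4}\|\x_t-\x^*\|$ line up cleanly with the slack in Lemma~\ref{lem:converge_accelerate_form2}.
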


\begin{remark}[Stability]
Assuming exact projection steps, it is possible to get convergence rate $8(1-\sqrt{\mu/\nu})^t$  by setting $\rho = \bar\rho$ and $\eta = \frac{1/\nu-\bar\rho}{1-\bar\rho}$, replicating the convergence rate achieved by an acceleration scheme of \cite{gower2018accelerated}. 

However, a key feature of our algorithm captured by Theorem~\ref{lem:converge_accelerate} is that our proposed acceleration (momentum) scheme for block Kaczmarz is remarkably stable with respect to both the choice of parameters $\rho$ and $\eta$, as well as the precision of solving the regularized projection step.  Specifically, as one cannot hope to find the parameters at runtime, our result shows that it suffices to use an over-estimate of $\nu$ and an under-estimate of $\bar\rho$, where the estimation accuracy is captured by a factor $c$.

We highlight this stability as a key feature of our scheme, since it motivates our adaptive acceleration tuning for parameter $\rho$, described later in Section~\ref{s:practical}. Further, in Section~\ref{s:rate}, we show that after preprocessing with the RHT, $\nu$ can be bounded by $\tilde O(\frac ms)$, where $s$ is the block size, which suggests a simple problem-agnostic estimate for $\eta$ as well. Finally, we also show similar stability guarantees with respect to the regularization parameter $\lambda$ in Section \ref{s:rate}.
\end{remark}

Our next goal is to prove Theorem~\ref{lem:converge_accelerate}. We obtain this result through a careful reformulation of the \alg\ update, replacing the momentum vector with two auxiliary iterate sequences, which allows us to lean on existing Lyapunov-style convergence analysis for accelerated methods \cite{gower2018accelerated}. 
Instead of maintaining the momentum vector $\m_t$, we initialize $\v_0 = \y_0$ and maintain iterates $\x_t, \y_t, \v_t$ based on the following update rules:
\begin{align}\label{alg:main_analyze}
\begin{cases}
\x_t = \alpha\v_t + (1-\alpha)\y_t \\
\text{Compute } \tilde\w_t\approx\w_t  \\
\y_{t+1} = \x_t-\tilde\w_t \\
\v_{t+1} = \beta\v_t+(1-\beta)\x_t - \gamma\tilde\w_t
\end{cases}
\end{align}
These iterates are essentially in the form considered earlier in \cite{gower2018accelerated, derezinski2024fine}, and we can carry out the analysis of Nesterov's acceleration similarly, with the key difference that we use regularized projections in the Kaczmarz update, whereas prior works use exact projections. This results in the following estimate for the convergence rate in a convenient metric $\Delta_t$ as defined below and under a particular parameter choice.
\begin{lemma}[Based on {\cite[Lem.~2, Thm.~3] {gower2018accelerated}}]\label{lem:converge_accelerate_form2}
In the setting of Theorem~\ref{lem:converge_accelerate}, observe that $\mu := \lambda_{\min}^+(\bar\P_\lambda)$ and $\nu := \lambda_{\max}(\E[(\bar\P_\lambda^{\dagger/2}\P_{\lambda,S}\bar\P_\lambda^{\dagger/2})^2])$ satisfy $1\leq \nu\leq 1/\mu$. Moreover, suppose that $\tilde\mu\leq \mu$ and $\tilde\nu\geq \nu$, and let $\x_t,\y_t, \v_t$ be defined as in \eqref{alg:main_analyze} with  $\beta = 1 - \sqrt{\tilde\mu / \tilde\nu}$, $\gamma = 1 / \sqrt{\tilde\mu\tilde\nu}$ and $\alpha = 1 / (1+\gamma\tilde\nu)$. Also, let
\begin{align*}\Delta_t = \|\v_t-\x^*\|_{\bar\P_{\reg}^\dagger}^2 + \frac{1}{\tilde\mu}\|\y_t-\x^*\|^2.
\end{align*}
If $\|\tilde\w_t-\w_t\| \leq \frac{\tilde{\mu}}4\|\x_t - \x^*\|$, then we have
\begin{align*}
\E\left[\Delta_{t+1} \right] \leq \left(1 - \frac{1}{2}\sqrt{\frac{\tilde\mu}{\tilde\nu}}\right) \cdot \E\left[\Delta_t \right].
\end{align*}
\end{lemma}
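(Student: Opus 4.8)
The plan is to show that Lemma~\ref{lem:converge_accelerate_form2} is essentially a restatement of the accelerated convergence theorem of Gower--Richtárik \cite{gower2018accelerated}, but run with the \emph{regularized} projection operator $\P_{\lambda,S}$ in place of the exact projection and allowing an inexact gradient-step surrogate $\tilde\w_t$. So the first task is dictionary-building: identify the ``stochastic Newton / sketch-and-project'' setup of \cite{gower2018accelerated} with our quantities. Concretely, their stochastic update direction corresponds to $\w_t = \bar\P_\lambda^{\dagger}\cdot(\text{something})$—wait, more precisely one checks that $\w_t = \P_{\lambda,S}\,\bar\P_\lambda^{\dagger}(\cdot)$ is \emph{not} quite a projection, so the cleanest route is to work in the $\bar\P_\lambda$-weighted geometry: let $\mathbf{Z}_t := \bar\P_\lambda^{\dagger/2}\P_{\lambda,S}\bar\P_\lambda^{\dagger/2}$, observe $\E[\mathbf{Z}_t] = \vPi$ (the orthogonal projector onto $\range(\bar\P_\lambda) = \range(\A^\top)$, using the null-space hypothesis), and note $\mu = \lambda_{\min}^+(\bar\P_\lambda)$, $\nu = \lambda_{\max}(\E[\mathbf{Z}_t^2])$. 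The relation $1\le \nu \le 1/\mu$ then follows from $\vPi = \E[\mathbf{Z}_t] \preceq (\E[\mathbf{Z}_t^2])^{1/2}\cdot\text{(Jensen/Cauchy--Schwarz for operators)}$ for the lower bound $\nu\ge 1$, and from $\mathbf{Z}_t \preceq \bar\P_\lambda^{\dagger/2}\P_{\lambda,S}\bar\P_\lambda^{\dagger/2}\preceq \frac1\mu\cdot\bar\P_\lambda^{\dagger/2}\bar\P_\lambda\bar\P_\lambda^{\dagger/2} = \frac1\mu\vPi$ (since $\P_{\lambda,S}\preceq \vPi$ as a sub-projection, hence $\le$ eigenvalue $1\le \frac1\mu$ times identity on the range), giving $\mathbf{Z}_t^2\preceq \frac1\mu \mathbf{Z}_t$ and therefore $\nu\le\frac1\mu$.

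Next I would set up the Lyapunov function $\Delta_t = \|\v_t-\x^*\|_{\bar\P_\lambda^\dagger}^2 + \frac1{\tilde\mu}\|\y_t-\x^*\|^2$ and run the standard three-iterate accelerated analysis on the updates \eqref{alg:main_analyze}. The core algebraic steps are: (i) expand $\|\v_{t+1}-\x^*\|_{\bar\P_\lambda^\dagger}^2$ using $\v_{t+1} = \beta\v_t + (1-\beta)\x_t - \gamma\tilde\w_t$; (ii) expand $\|\y_{t+1}-\x^*\|^2$ using $\y_{t+1} = \x_t - \tilde\w_t$; (iii) take conditional expectation over $S$, replacing $\w_t$ by its mean (here $\E[\w_t]$ is the $\bar\P_\lambda$-image of $\x_t-\x^*$ up to the regularization bias, which is exactly why we need $\bar\P_\lambda$ to have the same null space as $\A$—so that $\x^*$ is a fixed point and $\bar\P_\lambda(\x_t-\x^*)$ points ``toward'' the solution set); (iv) use the parameter choices $\beta = 1-\sqrt{\tilde\mu/\tilde\nu}$, $\gamma = 1/\sqrt{\tilde\mu\tilde\nu}$, $\alpha = 1/(1+\gamma\tilde\nu)$ together with the identity $\x_t = \alpha\v_t + (1-\alpha)\y_t$ to telescope the cross terms into $-\tfrac12\sqrt{\tilde\mu/\tilde\nu}\,\Delta_t$ plus a nonpositive remainder; and (v) control the error terms introduced by $\tilde\w_t\ne\w_t$ via the hypothesis $\|\tilde\w_t-\w_t\|\le \tfrac{\tilde\mu}{4}\|\x_t-\x^*\|$, absorbing them into the contraction. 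The second-moment quantity $\nu$ enters precisely at step (iii) when bounding $\E\|\mathbf{Z}_t(\x_t-\x^*)\|^2$ (equivalently $\E\|\w_t\|^2$ in the right norm) by $\nu$ times the first-moment term; the over/under-estimates $\tilde\mu\le\mu$, $\tilde\nu\ge\nu$ are used monotonically so that the bound degrades gracefully (this is the content of ``stability'' in the remark).

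The main obstacle I anticipate is handling the inexactness of $\tilde\w_t$ cleanly. In the exact case \cite{gower2018accelerated} one uses $\w_t = \P_{\lambda,S}^{(\text{in the right metric})}(\x_t-\x^*)$ and exploits that this is a contraction/projection to get exact cancellations. With $\tilde\w_t = \w_t + \e_t$, every squared-norm expansion in steps (i)--(ii) picks up cross terms like $\langle \e_t,\ \v_t-\x^*\rangle_{\bar\P_\lambda^\dagger}$ and $\langle\e_t,\ \y_t-\x^*\rangle$, plus a $\|\e_t\|^2$ term; these must be bounded by $\|\e_t\|\le\tfrac{\tilde\mu}{4}\|\x_t-\x^*\|$ and then re-expressed in terms of $\Delta_t$ (using $\|\x_t-\x^*\|^2 \le (\text{const})\cdot\Delta_t$, which follows from $\x_t$ being a convex combination of $\v_t,\y_t$ and the norm equivalence $\|\cdot\|_{\bar\P_\lambda^\dagger}^2 \ge \mu\|\cdot\|^2$ on $\range(\bar\P_\lambda)$). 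The constant $\tfrac14$ and the factor $\tfrac12$ in the final rate are presumably chosen exactly so this absorption goes through; getting the bookkeeping of these constants right—especially reconciling $\tfrac{\tilde\mu}{4}$ here with the $\tfrac{\rho^2}{8\eta}$ appearing in Theorem~\ref{lem:converge_accelerate}—is the delicate part. A secondary subtlety is verifying that $\bar\P_\lambda^\dagger$ is well-defined on all iterates, i.e.\ that $\v_t,\y_t,\x_t$ (after subtracting $\x^*$) stay in $\range(\bar\P_\lambda) = \range(\A^\top)$; this is an easy induction from $\v_0=\y_0$, $\x_0\in\range(\A^\top)$ (minimum-norm initialization, or one projects), and the fact that $\tilde\w_t$ should also lie in that range—which holds for $\w_t$ exactly and can be assumed WLOG for $\tilde\w_t$.
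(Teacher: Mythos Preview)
Your proposal is correct and takes essentially the same route as the paper: the three-sequence Lyapunov analysis of \cite{gower2018accelerated} with the regularized projection $\P_{\lambda,S}$ in place of an exact projection, plus an inexact-step layer (the paper credits \cite{derezinski2024fine} for this). The one execution difference worth flagging is how the error $\e_t=\tilde\w_t-\w_t$ is absorbed: rather than bounding $\|\x_t-\x^*\|^2$ by a constant times $\Delta_t$ as you suggest, the paper introduces a Young-inequality slack parameter $\phi$ (splitting $\|\v_{t+1}-\x^*\|_{\bar\P_\lambda^\dagger}^2\le(1+\phi)\|\v_{t+1}^{\mathrm{exact}}-\x^*\|_{\bar\P_\lambda^\dagger}^2+(1+\tfrac1\phi)\gamma^2\|\e_t\|_{\bar\P_\lambda^\dagger}^2$, and similarly for $\y_{t+1}$), chooses $\phi$ so that $(1+\phi)\beta=1-\tfrac12\sqrt{\tilde\mu/\tilde\nu}$, and then verifies that the aggregate coefficient of $\|\x_t-\x^*\|^2$ is nonpositive once $\|\e_t\|\le\tfrac{\tilde\mu}{4}\|\x_t-\x^*\|$, so that term simply drops rather than being recycled into $\Delta_t$.
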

The proof of Lemma~\ref{lem:converge_accelerate_form2} is generally similar to the proofs of \cite[Theorem 3]{gower2018accelerated} and \cite[Lemma 23]{derezinski2024fine} and it is deferred to Appendix~\ref{s:inexact}. 
Next, we prove the equivalence of the \alg\ update and \eqref{alg:main_analyze}.
\begin{lemma}[Equivalence of two algorithm formulations]\label{lem:equivalence}
In the notations of Theorem~\ref{lem:converge_accelerate}, let $\rho \in [0, c \bar\rho]$ and $\eta \in [\frac c\nu,\frac1{2\nu}]$  for some $c\in(0,1/2]$. Setting the parameters $\alpha, \beta, \gamma$ as in the statement of Lemma~\ref{lem:converge_accelerate_form2} above with
\begin{align*}
\tilde\nu  = \frac{1}{\rho + \eta(1 -\rho)}\quad\text{and}\quad \tilde\mu
=\rho^2\tilde\nu,
\end{align*}
we will get that (a) $\tilde\mu\leq \mu$ and $\tilde\nu\geq \nu$; and (b) the iterates $\x_t$ obtained from the updates \eqref{alg:main_analyze}, initialized with $\v_0=\y_0=\x_0$ and letting $\m_0 = \mathbf{0}$, satisfy 
\begin{align*}
\m_{t+1} = \frac{1-\rho}{1+\rho}\big(\m_t - \tilde\w_t\big)
\quad\text{and}\quad \x_{t+1} = \x_t - \tilde\w_t +\eta\m_{t+1}.
\end{align*}
That is, the iteration \eqref{alg:main_analyze} will satisfy the assumptions of Lemma~\ref{lem:converge_accelerate_form2} and it will be equivalent to the  lines~\ref{line:blocks}-\ref{line:update} of Algorithm~\ref{alg:main}.
\end{lemma}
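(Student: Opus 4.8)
The plan is to prove the two claims (a) and (b) essentially separately, since (a) is an algebraic inequality about the parameter substitution and (b) is an induction establishing that the reformulated iteration \eqref{alg:main_analyze} produces the same $\x_t$ sequence as the momentum form in lines~\ref{line:blocks}--\ref{line:update}. I would open with the definitions $\tilde\nu = 1/(\rho + \eta(1-\rho))$ and $\tilde\mu = \rho^2\tilde\nu$, and then unwind what $\alpha,\beta,\gamma$ from Lemma~\ref{lem:converge_accelerate_form2} become under this substitution. Since $\sqrt{\tilde\mu/\tilde\nu} = \rho$ and $\sqrt{\tilde\mu\tilde\nu} = \rho\tilde\nu$, we get $\beta = 1-\rho$, $\gamma = 1/(\rho\tilde\nu)$, and $\alpha = 1/(1+\gamma\tilde\nu) = 1/(1+1/\rho) = \rho/(1+\rho)$. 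Having these closed forms is the workhorse for everything that follows; I'd record them as a short preliminary computation.

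For part (a): the bound $\tilde\nu \geq \nu$ should follow from $\nu \leq 1/\mu \leq 1/\rho^2 \cdot (\text{something})$ — more precisely, since $\rho \leq c\bar\rho = c\sqrt{\mu/\nu} \leq \sqrt{\mu/\nu}$ we have $\rho^2\nu \leq \mu \leq 1$ hence $\rho \leq 1$, and I need $\rho + \eta(1-\rho) \leq 1/\nu$. Using $\eta \leq 1/(2\nu)$ and $\rho^2 \leq \mu/\nu \leq 1/\nu$ (since $\mu \leq 1$), one bounds $\rho + \eta(1-\rho) \leq \rho + \eta \leq \frac1{\sqrt\nu} + \frac1{2\nu}$; this is not obviously $\leq 1/\nu$, so I expect the actual argument needs $c$ to be used — with $\rho \leq c\bar\rho$ and $c \leq 1/2$ the term $\rho$ is controlled more tightly, and combined with $\eta \leq 1/(2\nu)$ we should land at $\rho + \eta(1-\rho) \leq 1/\nu$, giving $\tilde\nu \geq \nu$. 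For $\tilde\mu \leq \mu$: $\tilde\mu = \rho^2\tilde\nu$ and since $\rho \leq c\bar\rho$, $\rho^2 \leq c^2\mu/\nu$; combined with $\tilde\nu \leq 1/(c/\nu \cdot \text{stuff})$ — here I use $\eta \geq c/\nu$ so that $\rho + \eta(1-\rho) \geq \eta(1-\rho) \geq \tfrac{c}{\nu}(1-\rho) \geq \tfrac{c}{2\nu}$ (as $\rho \leq 1/2$), hence $\tilde\nu \leq 2\nu/c$; then $\tilde\mu \leq c^2(\mu/\nu)(2\nu/c) = 2c\mu \leq \mu$ since $c \leq 1/2$. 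The sign/direction bookkeeping here is the fiddly part, but it is elementary.

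For part (b): I would substitute the closed forms into \eqref{alg:main_analyze} to get $\x_t = \tfrac{\rho}{1+\rho}\v_t + \tfrac1{1+\rho}\y_t$, $\y_{t+1} = \x_t - \tilde\w_t$, and $\v_{t+1} = (1-\rho)\v_t + \rho\x_t - \tfrac1{\rho\tilde\nu}\tilde\w_t$. Then define $\m_{t+1} := \tfrac{\x_{t+1} - \y_{t+1}}{\eta}$ (or whatever relation makes the bookkeeping cleanest — one wants $\x_{t+1} = \y_{t+1} + \eta\m_{t+1}$, matching $\x_{t+1} = \x_t - \tilde\w_t + \eta\m_{t+1}$ since $\y_{t+1} = \x_t - \tilde\w_t$). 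The induction hypothesis is that this $\m_t$ satisfies the momentum recursion; the base case $\m_0 = \mathbf 0$ holds because $\v_0 = \y_0 = \x_0$. For the inductive step, I'd express $\x_{t+1} - \y_{t+1}$ using the update for $\v_{t+1}$ and $\y_{t+1}$ and the relation $\x_{t+1} = \tfrac{\rho}{1+\rho}\v_{t+1} + \tfrac1{1+\rho}\y_{t+1}$, then simplify to show $\x_{t+1} - \y_{t+1} = \tfrac{\eta(1-\rho)}{1+\rho}(\m_t - \tilde\w_t)$, which combined with the definition gives $\m_{t+1} = \tfrac{1-\rho}{1+\rho}(\m_t - \tilde\w_t)$. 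Checking the coefficients match requires using the identity $\eta = \tfrac1{\tilde\nu} - \rho(\tfrac1{\tilde\nu} - 1) = \tfrac{1-\rho\tilde\nu + \rho\tilde\nu \cdot(\text{...})}{\ldots}$ — concretely $\tilde\nu(\rho + \eta(1-\rho)) = 1$, i.e. $\eta(1-\rho) = \tfrac1{\tilde\nu} - \rho$, which is exactly the term $\gamma\rho\tilde\nu^{-1}$-type quantity appearing when I expand $\v_{t+1} - \v_t$. The main obstacle I anticipate is keeping the algebra organized: there are several equivalent ways to write the momentum coefficient, and I would pin down the single identity $\tilde\nu^{-1} = \rho + \eta(1-\rho)$ early and use it repeatedly so the inductive computation closes cleanly rather than sprawling. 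The final sentence — that \eqref{alg:main_analyze} satisfies the hypotheses of Lemma~\ref{lem:converge_accelerate_form2} — then follows by combining (a) with the precision assumption $\|\tilde\w_t - \w_t\| \leq \tfrac{\rho^2}{8\eta}\|\x_t - \x^*\|$, noting $\tfrac{\rho^2}{8\eta} = \tfrac{\tilde\mu}{8\eta\tilde\nu} \leq \tfrac{\tilde\mu}{4}$ since $\eta \geq c/\nu \geq$ (a bound forcing $2\eta\tilde\nu \geq 1$); I'd verify that last inequality as a one-liner at the end.
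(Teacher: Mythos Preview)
Your plan is essentially the same route the paper takes: compute the closed forms $\beta=1-\rho$, $\gamma=1+\eta(1-\rho)/\rho$, $\alpha=\rho/(1+\rho)$, then for (b) track $\v_t-\y_t$ (or equivalently $\x_t-\y_t=\alpha(\v_t-\y_t)$) through one update to recover the momentum recursion. Your definition $\m_t:=(\x_t-\y_t)/\eta$ is just a rescaling of the paper's $\m_t:=\tfrac{\beta(1-\alpha)}{\gamma-1}(\v_t-\y_t)$, and the identity $\tfrac{\alpha(\gamma-1)}{\beta(1-\alpha)}=\eta$ (which you'd verify via $\tilde\nu^{-1}=\rho+\eta(1-\rho)$) makes the two agree. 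The inductive computation closes exactly as you describe.

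The one place your plan is genuinely incomplete is the $\tilde\nu\geq\nu$ step in (a), which you flag but do not resolve. The missing observation is that $\nu\leq 1/\mu$ (stated in Lemma~\ref{lem:converge_accelerate_form2}) gives $\bar\rho=\sqrt{\mu/\nu}\leq 1/\nu$, hence $\rho\leq c\bar\rho\leq c/\nu\leq\eta$. Once you have $\rho\leq\eta$, the bound is immediate: $\rho+\eta(1-\rho)\leq 2\eta\leq 1/\nu$. Your attempted bound $\rho\leq 1/\sqrt\nu$ is too weak and no amount of playing with $c\leq 1/2$ will rescue it without this sharper inequality. The same observation also streamlines $\tilde\mu\leq\mu$: since $\rho+\eta(1-\rho)\geq\eta$ (as $\rho(1-\eta)\geq 0$), you get $\tilde\mu=\rho^2/(\rho+\eta(1-\rho))\leq\rho^2/\eta\leq c^2\bar\rho^2\cdot(\nu/c)=c\mu\leq\mu$, avoiding your detour through $\tilde\nu\leq 2\nu/c$. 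Your closing check that $\rho^2/(8\eta)\leq\tilde\mu/4$ is correct and follows from $2\eta\tilde\nu\geq 1$, which is again just $\rho+\eta(1-\rho)\leq 2\eta$.
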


The proof of this lemma is a direct verification, in particular, checking that $\m_t := \frac{\beta(1-\alpha)}{\gamma - 1}(\v_t-\y_t)$ satisfies the above recursion. It is deferred to Appendix~\ref{s:appendix_proof_equivalence}.

\begin{proof}[Proof of Theorem~\ref{lem:converge_accelerate}]
By Lemma~\ref{lem:equivalence}, the update from Algorithm~\ref{alg:main} is equivalent to the process according to the updates \eqref{alg:main_analyze}, with $\alpha = 1 / (1+\sqrt{\tilde\nu/\tilde{\mu}}) = \frac{\rho}{1+\rho}$. From Lemma~\ref{lem:converge_accelerate_form2}, we have the following two convergence results in terms of iterates $\y_t$ and $\v_t$ respectively:
\begin{align}\label{eq:converge_xt_vt}
\begin{cases}
\E\|\y_t-\x^*\|^2\le \tilde\mu\E[\Delta_t]\leq (1-\rho/2)^t\tilde\mu\Delta_0, \\
\E\|\v_t-\x^*\|^2\leq \E[\Delta_t]\leq (1-\rho/2)^t\Delta_0,
\end{cases}
\end{align}
where in the second inequality we use  that $\bar\P_{\reg}^\dagger \succeq \I$, thus $\|\v_t - \x^*\| \leq \|\v_t - \x^*\|_{\bar\P_{\reg}^\dagger}$. 

Our goal is to reformulate the convergence result in terms of the sequence $\x_t$. Since $\x_t = \alpha \v_t + (1-\alpha)\y_t$, we have the following:
\begin{align*}
\E\|\x_t-\x^*\|^2 
= & ~ \E\|\alpha(\v_t-\x^*) + (1-\alpha)(\y_t-\x^*)\|^2 \\
\leq & ~ 2\alpha^2\E\|\v_t-\x^*\|^2 + 2(1-\alpha)^2\E\|\y_t-\x^*\|^2 \\
\leq & ~ 2(1-\rho/2)^t\Big(\alpha^2\Delta_0 + (1-\alpha)^2\tilde\mu\Delta_0\Big)\\
\leq & ~ 2(1-\rho/2)^t(\alpha^2/\tilde\mu + 1)\Big(\tilde\mu\|\y_0-\x^*\|_{\bar\P_{\reg}^\dagger}^2+\|\y_0-\x^*\|^2\Big) \\
\leq & ~ 4(1+1/\tilde\nu)(1-\rho/2)^t\|\y_0-\x^*\|^2,
\end{align*}
where the third step follows from \eqref{eq:converge_xt_vt}, the fourth step follows since $\v_0 = \y_0$, the last step follows since $\|\bar\P_{\reg}^\dagger\| \leq 1 / \tilde\mu$ and $\alpha = \rho / (1+\rho) < \rho = \sqrt{\tilde\mu / \tilde\nu}$. Finally, since $\tilde\nu \ge \nu \geq 1$ (also by Lemma~\ref{lem:equivalence}) and $\x_0 = \alpha \v_0 + (1-\alpha)\y_0 = \y_0$, we conclude that
\begin{align*}
\E\|\x_t-\x^*\|^2 \leq 4(1+1/\tilde\nu)(1-\rho/2)^t\|\y_0-\x^*\|^2 \leq 8 \left(1 - \rho/2\right)^t \cdot \|\x_0 - \x^*\|^2.
\end{align*}
\end{proof}

\section{Sharp Convergence Rate via Regularized Projections}\label{s:rate}

In the previous section, we showed that, with an appropriate choice of the parameters $\eta$ and $\rho$, the convergence rate of Algorithm \ref{alg:main} depends on the theoretical quantity $\bar\rho = \sqrt{\mu/\nu}$, where $\mu$ and $\nu$ are notions of expectation and variance for the random regularized projection $\P_{\reg,S}$. In this section, we show that after preprocessing with the randomized Hadamard transform, it is possible to give a sharp characterization of both of these quantities in terms of the regularization amount $\reg$ and the spectrum of the input matrix $\A$. 
First, in Section \ref{s:mu}, we lower bound the expectation of the regularized projection (with respect to positive semidefinite ordering), which allows us to lower bound the parameter $\mu$.  Then we bound the variance of this projection and thus the term $\nu$ in Section \ref{s:nu}. The use of regularization is crucial for bounding $\nu$, and also for the analysis of block memoization in Section~\ref{s:blocks}.

\subsection{Expectation of the Regularized Projection}
\label{s:mu}
First, we lower bound the parameter $\mu =\lambda_{\min}^+(\E[\P_{\reg,S}])$
for some $\reg \geq 0$. 
Even better, we give a more general result, lower-bounding the entire expectation of the matrix in positive semidefinite ordering, which will be necessary later for the analysis of the variance term $\nu$ in Section~\ref{s:nu}.  

The following result shows that after applying the randomized Hadamard transform, the expectation of the regularized projection matrix $\P_{\reg,S}$ based on a  random sample of the rows of $\A$ is lower-bounded by an analogously defined regularized projection of the full matrix $\A$, with appropriately adjusted regularizer (denoted as $\bar\lambda$). This result can be viewed as a natural extension of some recent prior works \cite{derezinski2023solving,derezinski2024fine}, which showed such guarantees for classical random projections (i.e., not regularized). While introducing regularization naturally shrinks the random matrix $\P_{\reg,S}$, and thus it must also decrease the lower bound, we show that there is a level of regularization below which the overall expectation bound does not get significantly affected. Thus, we can reap the benefits of regularization (e.g., when bounding $\nu$ later on) without sacrificing any of the effectiveness of the projection.

\begin{theorem}\label{l:mu-reg}
Suppose $\A\in\R^{m \times n}$ 
is transformed by RHT, i.e., $\bar\A = \Q\A$. Let $\sigma_1\geq\sigma_2\geq ...$ be $\A$'s singular values. Given $\delta\in(0,1)$ and $C\log(m/\delta) \leq k < \rank(\A)$, let $\bar{\lambda} = \frac{1}{k}\sum_{i>k} \sigma_i^2$. Let $\setS\sim \Uc(m,s)$ be a uniformly random subset of $[m]$ with size $s \geq Ck \log (m\bar\kappa_k)$. Then, for any $0 \leq \reg \leq \frac{k}{m}\bar{\lambda}$, with probability $1-\delta$ the transformed matrix $\bar\A$ satisfies:
\begin{align*}
\E_{S\sim\Uc(m,s)}\Big[\bar\A_{\setS}^\top\big(\bar\A_{\setS}\bar\A_{\setS}^\top + \reg\I\big)^{\dagger}\bar\A_{\setS}\Big] ~\succeq~ \frac12\A^\top(\A \A^\top + \bar{\lambda}\I)^{-1}\A.
\end{align*}
\end{theorem}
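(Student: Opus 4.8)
The plan is to prove the matrix inequality by establishing a stronger fact at the level of individual random blocks, then averaging. The key idea is that after the RHT, every size-$s$ subset of rows of $\bar\A$ is "incoherent," so that for a uniformly random $S$, the block $\bar\A_S$ behaves like a $\tfrac sm$-fraction of the full matrix, both in spectral and Frobenius terms. Concretely, I would first invoke a matrix Chernoff / subspace embedding bound for row subsampling after RHT (as in \cite{tropp2011improved}, and in the form used by \cite{derezinski2023solving,derezinski2024fine}): with probability $1-\delta$ over the diagonal signs $\D$, simultaneously for all (or for a random) $S$ of size $s \geq Ck\log(m\bar\kappa_k)$, one has a two-sided bound of the shape
\begin{align*}
\tfrac{s}{2m}\,\A^\top(\A\A^\top)\text{-type quantities} \;\preceq\; \bar\A_S^\top\bar\A_S \;\preceq\; \tfrac{2s}{m}\,(\cdots)
\end{align*}
when restricted to the top-$k$ singular directions, together with an operator-norm tail bound $\|\bar\A_S\|^2 \lesssim \tfrac sm\sigma_{k}^2 + \bar\lambda$ controlling the contribution of the tail singular values (this is where $\bar\lambda=\tfrac1k\sum_{i>k}\sigma_i^2$ and the threshold $k\geq C\log(m/\delta)$ enter). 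The regularizer choice $\reg\leq\tfrac km\bar\lambda$ is calibrated so that $\reg\I$ is dominated by $\tfrac sm\bar\lambda\I$ inside the block, i.e. adding $\reg\I$ to $\bar\A_S\bar\A_S^\top$ only perturbs it by a constant factor relative to the effective tail-regularization $\tfrac sm\bar\lambda$.

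Given such an event, I would proceed as follows. Split $\A = \A_k + \A_{\perp}$ along the SVD, where $\A_k$ carries the top $k$ singular values and $\A_\perp$ the rest, so $\|\A_\perp\|^2=\sigma_{k+1}^2 \leq \bar\lambda$ and $\|\A_\perp\|_F^2 = k\bar\lambda$. The target $\A^\top(\A\A^\top+\bar\lambda\I)^{-1}\A$ is, in the eigenbasis, $\mathrm{diag}\big(\sigma_i^2/(\sigma_i^2+\bar\lambda)\big)$: it is $\approx 1$ on the top-$k$ block and $\leq\tfrac12$ on the tail. For the top-$k$ block, I would use the subspace-embedding lower bound on $\bar\A_S^\top\bar\A_S$ together with the upper bound $\bar\A_S\bar\A_S^\top+\reg\I \preceq O(\tfrac sm)(\sigma_k^2+\bar\lambda)\I$ on the relevant subspace, so that $\bar\A_S^\top(\bar\A_S\bar\A_S^\top+\reg\I)^\dagger\bar\A_S$ restricted there is $\succeq$ a constant, beating $\tfrac12\cdot$(something $\leq 1$). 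For the tail, I would exploit that $\P_{\reg,S}\succeq 0$ always and, more carefully, use a matrix-convexity / Jensen-type argument: the map $\M\mapsto \M^\top(\M\M^\top+\reg\I)^{-1}\M = \I - \reg(\M^\top\M+\reg\I)^{-1}$ (on the column space) is operator-concave in $\M^\top\M$, so $\E_S[\P_{\reg,S}] \succeq \I - \reg(\E_S[\bar\A_S^\top\bar\A_S]+\reg\I)^{-1}$-style bound; combined with $\E_S[\bar\A_S^\top\bar\A_S]=\tfrac sm\A^\top\A$ (exact, since uniform sampling is unbiased) and the calibration of $\reg$, this yields the full lower bound $\tfrac12\A^\top(\A\A^\top+\bar\lambda\I)^{-1}\A$ after absorbing constants.

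Rather than literally re-deriving everything, I expect the cleanest route is to cite the classical-projection version from \cite{derezinski2023solving} or \cite{derezinski2024fine} as a black box, which gives $\E_S[\bar\A_S^\dagger\bar\A_S]\succeq c\,\A^\top(\A\A^\top+\bar\lambda\I)^{-1}\A$, and then show that replacing the exact pseudo-inverse $\bar\A_S^\dagger\bar\A_S$ by the regularized version $\bar\A_S^\top(\bar\A_S\bar\A_S^\top+\reg\I)^\dagger\bar\A_S$ costs at most a further constant factor when $\reg\leq\tfrac km\bar\lambda$. For this last comparison I would use the identity $\bar\A_S^\top(\bar\A_S\bar\A_S^\top+\reg\I)^\dagger\bar\A_S = \bar\A_S^\top\bar\A_S(\bar\A_S^\top\bar\A_S+\reg\I)^\dagger$ and the elementary fact $\tfrac{x}{x+\reg}\geq \tfrac{x}{x+t}\cdot\tfrac{t}{t+\reg}\geq \tfrac12\cdot\tfrac{x}{x+t}$ once $\reg\leq t$, applied spectrally with $t$ matching the effective regularization scale; pushing the resulting scalar inequality through the expectation requires monotonicity, which holds since $\P_{\reg,S}$ is monotone nonincreasing in $\reg$ in the Loewner order.

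The main obstacle I anticipate is controlling the \emph{tail} interaction: after subsampling, $\bar\A_S$ mixes the top-$k$ and tail directions, and the regularized inverse $(\bar\A_S\bar\A_S^\top+\reg\I)^\dagger$ does not split cleanly along the full matrix's SVD. Making the argument that the tail singular values of $\bar\A_S$ are uniformly bounded by $O(\tfrac sm)\sigma_{k+1}^2 + O(\tfrac sm)\bar\lambda = O(\tfrac sm\bar\lambda)$ — which is what lets $\reg$ and the tail "blend into" an effective regularization comparable to $\tfrac sm\bar\lambda$ on the whole space — is the delicate step, and it is precisely where the incoherence from RHT, the sample size $s\gtrsim k\log(m\bar\kappa_k)$, and the definition of $\bar\lambda$ are all used simultaneously; I would handle it via a matrix Chernoff bound applied to $\bar\A_S\bar\A_S^\top$ after rescaling the rows by their (now roughly equal) norms, being careful that the $\log(m\bar\kappa_k)$ factor gives enough concentration to make the failure probability $\delta$ work uniformly.
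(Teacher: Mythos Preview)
Your Jensen/concavity step is in the wrong direction. The map $\X\mapsto \X(\X+\reg\I)^{-1}=\I-\reg(\X+\reg\I)^{-1}$ is operator \emph{concave}, so Jensen yields $\E_S[\P_{\reg,S}]\preceq \I-\reg(\E_S[\bar\A_S^\top\bar\A_S]+\reg\I)^{-1}$, an upper bound. What you need is a \emph{lower} bound on $\E_S[\P_{\reg,S}]$, equivalently an \emph{upper} bound on $\E_S[(\bar\A_S^\top\bar\A_S+\reg\I)^{-1}]$; convexity of the inverse gives you nothing here. Likewise, your ``black-box from $\reg=0$'' route does not close: monotonicity in $\reg$ says $\P_{\reg,S}\preceq \P_{0,S}$, so a lower bound on $\E[\P_{0,S}]$ cannot be transferred to $\E[\P_{\reg,S}]$. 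Your scalar inequality $\tfrac{x}{x+\reg}\geq\tfrac12\tfrac{x}{x+t}$ only helps if $t\geq\reg$, i.e.\ it lets you pass to a \emph{more} regularized projection, not to the unregularized one; and to get $\P_{\reg,S}\succeq c\,\P_{0,S}$ pointwise you would need $\sigma_{\min}^+(\bar\A_S)^2\gtrsim\reg$, which fails since $\bar\A_S$ can have arbitrarily small positive singular values.

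The paper's route is genuinely different and sidesteps both issues. It writes $\P_{\reg,S}\succeq \I-\tfrac{k\bar\lambda}{m}\big(\bar\A_S^\top\bar\A_S+\tfrac{k\bar\lambda}{m}\I\big)^{-1}$ and then upper-bounds the expected inverse \emph{not} via Jensen but via a coupling: after RHT, a uniform set $S$ of size $s\gtrsim k\log(m\bar\kappa_k)$ contains, with high probability, a set $S_{\DPP}\sim\DPP\big(\tfrac{m}{\bar\lambda(m-k)}\bar\A\bar\A^\top+\tfrac{k}{m-k}\I\big)$ (Lemma~3.4). Since $\bar\A_{S_{\DPP}}^\top\bar\A_{S_{\DPP}}\preceq\bar\A_S^\top\bar\A_S$, operator monotonicity of the inverse reduces the task to bounding $\E\big[(\bar\A_{S_{\DPP}}^\top\bar\A_{S_{\DPP}}+\tfrac{k\bar\lambda}{m}\I)^{-1}\big]$. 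The key new ingredient is Lemma~3.5: by recognizing this particular DPP as a \emph{Regularized DPP} in the sense of \cite{derezinski2020bayesian}, one obtains the exact-type inequality $\E[(\I+\tfrac{m}{k\bar\lambda}\bar\A_{S_{\DPP}}^\top\bar\A_{S_{\DPP}})^{-1}]\preceq\bar\lambda(\A^\top\A+\bar\lambda\I)^{-1}$. This is the step your proposal is missing, and it is not recoverable from matrix Chernoff or from the $\reg=0$ black box alone.
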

\begin{remark}
Theorem \ref{l:mu-reg} implies that after RHT preprocessing, the expected regularized projection has the same null space as $\A$, and moreover, for any $\lambda\in[0,\frac km\bar\lambda]$:
\begin{align*}
\mu\Big(\bar\A,\Uc(m,s),\lambda\Big) \geq \frac{\sigma_{\min}^+(\A)^2/2}{\sigma_{\min}^+(\A)^2+\bar\lambda}
= \frac{1/2}{1+\frac{r-k}k\bar\kappa_k^2}
\geq \frac{k}{2r\bar{\kappa}_k^2},
\end{align*}
where $r$ is the rank of $\A$, while $\bar{\kappa}_k$ and $\mu$ are defined in \eqref{eq:kappa_k_def} and \eqref{eq:rho-mu-nu}, respectively. 
\end{remark}
\begin{remark}
    The main part of our analysis that requires RHT is Lemma \ref{l:dpp-reduction} below, from \cite{derezinski2024fine}. RHT preprocessing ensures that the input matrix $\A$ satisfies a deterministic incoherence condition which makes it amenable to uniform sub-sampling. For example, a sufficient (but not necessary) incoherence condition is that the matrix $\U$ of left singular vectors of $\A$ has all entries bounded as follows: $\max_{i,j} |u_{i,j}|=  O(1/\sqrt m)$. If this property holds for the input matrix $\A$, then Theorem \ref{l:mu-reg} applies without RHT.
\end{remark}
A similar guarantee to this one was given by \cite{derezinski2023solving}, which was later refined by \cite{derezinski2024fine}, however both of these prior results apply only to the case where $\lambda=0$. Remarkably, the right-hand side in the inequality above is identical to the corresponding Lemma 10 of \cite{derezinski2024fine}, which intuitively implies that introducing some regularization into the random projection step of block Kaczmarz does not substantially alter its expectation.

To prove the above result we build on a technique that has been developed in the aforementioned prior works. The strategy is to first show the lower bound for a non-uniform subset distribution called a determinantal point process, where one can leverage additional properties to compute the expectation of a random projection. Then, one can show that a sufficiently large uniform sample contains a sample from the determinantal point process, which implies the desired lower bound.
\begin{definition}\label{d:dpp}
    Given a PSD matrix $\Lb\in\mathcal{S}_m^{+}$, a determinantal point process $\setS\sim\DPP(\Lb)$ is a distribution over all sets $S\subseteq[m]$ such that $\Pr(\setS)\propto \det(\Lb_{\setS,\setS})$.
\end{definition}
\begin{lemma}[\cite{derezinski2021determinantal}]\label{l:dpp-size}
    The expected size of $S\sim \DPP(\Lb)$ is $\E[|S|] = \tr(\Lb(\Lb+\I)^{-1})$.
\end{lemma}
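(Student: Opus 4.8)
The plan is to compute $\E[|S|]$ directly from the $\DPP$ definition via a generating-function argument, avoiding any appeal to marginal-kernel machinery. Introduce the polynomial $Z(x) := \sum_{S\subseteq[m]} x^{|S|}\det(\Lb_{S,S})$, which at $x=1$ gives the normalization constant of $\DPP(\Lb)$ and whose logarithmic derivative will encode $\E[|S|]$.

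The one structural fact I would invoke is the principal-minor expansion of a characteristic-type determinant: for any $m\times m$ matrix $\M$ one has $\det(\I+\M) = \sum_{S\subseteq[m]}\det(\M_{S,S})$, which follows by expanding $\det(\I+\M)$ through multilinearity of the determinant in its columns (the $i$-th column being $\e_i$ plus the $i$-th column of $\M$). Applying this with $\M = x\Lb$ gives $Z(x) = \det(\I + x\Lb)$; in particular $Z(1) = \det(\I+\Lb)$, and since $\Lb$ is PSD all its principal minors are nonnegative, so $\Pr(\setS=S) = \det(\Lb_{S,S})/\det(\I+\Lb)$ is indeed a probability distribution summing to $1$.

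Next I would differentiate $Z$ in two ways. Term-by-term, $Z'(x) = \sum_S |S|\,x^{|S|-1}\det(\Lb_{S,S})$, so $Z'(1) = \sum_S |S|\det(\Lb_{S,S})$. Via Jacobi's formula applied to $\det(\I + x\Lb)$, $Z'(x) = \det(\I+x\Lb)\,\tr\!\big((\I+x\Lb)^{-1}\Lb\big)$, hence $Z'(1) = \det(\I+\Lb)\,\tr\!\big((\I+\Lb)^{-1}\Lb\big)$. Dividing by $Z(1)$ then yields $\E[|S|] = Z'(1)/Z(1) = \tr\big((\I+\Lb)^{-1}\Lb\big) = \tr\big(\Lb(\Lb+\I)^{-1}\big)$, the last equality since $\Lb$ commutes with $(\I+\Lb)^{-1}$.

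There is essentially no obstacle here; the only non-routine ingredient is the principal-minor identity, and it is completely standard. An alternative one-line route is linearity of expectation: $\E[|S|] = \sum_{i=1}^m \Pr(i\in\setS)$, combined with the marginal-kernel formula stating that $\Pr(i\in\setS)$ is the $i$-th diagonal entry of $\K = \Lb(\Lb+\I)^{-1}$, giving $\E[|S|] = \tr(\K)$ at once. However, deriving the marginal-kernel formula is itself essentially the generating-function computation above (or requires diagonalizing $\Lb$ and arguing eigenvector-by-eigenvector), so I would present the self-contained version.
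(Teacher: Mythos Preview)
Your proof is correct. The paper does not actually prove this lemma; it is stated with a citation and used as a black box, so there is nothing to compare against. Your generating-function argument via $Z(x)=\det(\I+x\Lb)$ and Jacobi's formula is a clean, self-contained derivation, and the alternative route through the marginal kernel $\K=\Lb(\Lb+\I)^{-1}$ that you mention is the standard textbook presentation.
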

In our proof we will use the following black-box reduction from uniform sampling to a DPP, which first appeared in the proof of Lemma 4.3 in \cite{derezinski2023solving}. The version below is based on the proof of Lemma 10 in \cite{derezinski2024fine}. 
\begin{lemma}[\cite{derezinski2024fine}]\label{l:dpp-reduction}
    Consider a PSD matrix $\Lb\in\mathcal{S}_m^{+}$, an $m\times m$ RHT matrix~$\Q$, and $\delta>0$ such that set  $\setS_{\DPP}\sim\DPP(\Q\Lb\Q^\top)$ satisfies $k:=\E[|\setS_{\DPP}|]\geq C\log(m/\delta)$. Then, conditioned on an RHT property that holds with probability $1-\delta$, a uniformly random set $\setS\sim\Uc(m,s)$ of size $s\geq Ck\log(k/\delta')$ can be coupled with $\setS_{\DPP}$ into a joint random variable $(S,S_{\DPP})$ such that $\setS_{\DPP}\subseteq\setS$  with probability $1-\delta'$.
\end{lemma}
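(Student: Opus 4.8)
The goal is to realize a sample $S_{\DPP}\sim\DPP(\Q\Lb\Q^\top)$ as a subset of a uniform $S\sim\Uc(m,s)$ with $s=\tilde O(k)$, in three steps: (1) identify the ``RHT property'' as near‑uniformity of the DPP's leverage scores and prove it by scalar concentration; (2) control the DPP's size; (3) build the coupling via a sequential rejection sampler fed by the list $S$. For step (1), I would take the RHT property to be $\max_{j}\K'_{jj}\le 2k/m$, where $\K':=\Q\Lb\Q^\top(\Q\Lb\Q^\top+\I)^{-1}=\Q\K\Q^\top$ is the DPP's marginal kernel, $\K:=\Lb(\Lb+\I)^{-1}$, and $\tr\K'=\tr\K=k$ (together with a companion delocalization bound $\max_{i,j}(\w_i)_j^2=O(\log(m/\delta)/m)$ on the eigenvectors $\w_i$ of $\K'$, i.e.\ on the transformed eigenvectors $\Q\mathbf{v}_i$ of $\K$). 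Since $\K'_{jj}=\mathbf{q}_j^\top\K\,\mathbf{q}_j$ where $\mathbf{q}_j$ is the $j$-th row of $\Q=\H\D$ --- a vector of i.i.d.\ $\pm 1/\sqrt m$ entries --- one has $\E_\Q[\K'_{jj}]=\tr\K/m=k/m$, so the Hanson--Wright inequality together with $\|\K\|\le 1$ and $\|\K\|_F^2\le\tr\K=k$ gives $\Pr[\K'_{jj}>2k/m]\le 2e^{-ck}$; a union bound over $j$ and the hypothesis $k\ge C\log(m/\delta)$ make this event fail with probability at most $\delta$. The delocalization bound is the same argument, via Hoeffding applied to $(\Q\mathbf{v}_i)_j=\tfrac1{\sqrt m}\sum_l H_{jl}\epsilon_l(\mathbf{v}_i)_l$, union-bounded over the $\le m^2$ pairs $(i,j)$.

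For step (2), eigendecompose $\K'=\sum_i\kappa_i\w_i\w_i^\top$ with $\kappa_i\in[0,1]$ and $\sum_i\kappa_i=k$. By the Hough--Krishnapur--Peres--Vir\'ag (HKPV) construction, $S_{\DPP}$ is produced by first forming a random set $V$ that includes each $i$ independently with probability $\kappa_i$, and then sampling a projection DPP with kernel $\mPi:=\sum_{i\in V}\w_i\w_i^\top$, whose output has size exactly $|V|$. A Chernoff bound on $|V|$ (mean $k$) gives $|V|\le 2k=:t$ with probability $\ge 1-\delta'/3$ --- and if $k<C\log(1/\delta')$ then $Ck\log(k/\delta')\ge m$, so $S=[m]$ and the claim is trivial. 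On the RHT event the companion bound then yields $\max_j\mPi_{jj}\le|V|\max_{i,j}(\w_i)_j^2\le\alpha$ with $\alpha=O(k\log(m/\delta)/m)$; to match the stated sample size one instead keeps the sharper $\alpha=O(k/m)$ by arguing on the residuals of $\K'$ directly rather than through a fixed $\mPi$.

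For step (3), sample $S=\{i_1,\dots,i_s\}$ uniformly without replacement and build $S_{\DPP}$ by running the sequential projection-DPP sampler for $\mPi$ while feeding it candidates from $i_1,i_2,\dots$ via rejection: maintaining the residual projection $\mPi^{(\ell)}$ after $\ell$ elements have been accepted (so $\tr\mPi^{(\ell)}=t-\ell$ and $\mPi^{(\ell)}_{jj}\le\mPi_{jj}\le\alpha$), accept a new candidate $i_r$ with probability $\mPi^{(\ell)}_{i_r i_r}/\alpha$; an accepted element then has law $\Pr[j]\propto\mPi^{(\ell)}_{jj}$, which is exactly the HKPV conditional rule, after which $\mPi^{(\ell+1)}$ is the usual rank-one downdate. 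Since $\sum_j\mPi^{(\ell)}_{jj}=t-\ell$, each trial at stage $\ell$ succeeds with probability $\ge(t-\ell)/(m\alpha)$, so the number of candidates consumed is stochastically dominated by $\sum_{\ell=0}^{t-1}\mathrm{Geom}\big((t-\ell)/(m\alpha)\big)$, which has mean $\le m\alpha\sum_{j=1}^t\tfrac1j\le m\alpha\ln(et)$ and an exponential tail; hence $s=Cm\alpha\log(t/\delta')=\tilde O(k\log(k/\delta'))$ candidates suffice with probability $\ge 1-\delta'/3$. (That $i_r$ is uniform over $[m]\setminus\{i_1,\dots,i_{r-1}\}$ rather than over $[m]$, and over $[m]$ rather than over the not-yet-accepted coordinates, perturbs the acceptance probabilities by a factor $1\pm O(s/m)=1\pm o(1)$, absorbed into constants.) Combining the three failure events yields $S_{\DPP}\subseteq S$ with probability $\ge 1-\delta'$, conditioned on the RHT event of probability $\ge 1-\delta$.

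\textbf{Main obstacle.} The delicate part is making step (3) exact: proving that conditioning the HKPV draws on being \emph{supplied by the uniform list} does not distort the output law away from $\DPP(\Q\Lb\Q^\top)$, and pinning down the sample size at which this holds with probability $1-\delta'$. The cleanest route is to package this as a distortion-free intermediate-sampling identity for regularized DPPs and to lower-bound the acceptance probability of its rejection step, which reduces to a matrix-Chernoff estimate $\tfrac ms\Lb'_\sigma\approx\Lb'$ on the sampled submatrix; this is exactly where the incoherence (flatness of leverage scores) and the budget $s\gtrsim k\log(k/\delta')$ are consumed, and obtaining the honest $\tilde O(k)$ bound --- rather than one that is $\mathrm{polylog}$-worse --- hinges on using the sharp $O(k/m)$ leverage-score bound instead of the crude $\mPi_{jj}\le|V|\max_{i,j}(\w_i)_j^2$. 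Converting between sampling with and without replacement is a routine but unavoidable technical wrinkle.
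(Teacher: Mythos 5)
First, a note on context: the paper does not prove this lemma itself --- it imports it as a black box, attributing the argument to Lemma 4.3 of \cite{derezinski2023solving} and Lemma 10 of \cite{derezinski2024fine} --- so the relevant comparison is with the intermediate-sampling technique used in those works, which your sketch indeed resembles (RHT flattens the DPP marginals, a size-$\tilde O(k)$ uniform sample is drawn, and the DPP is recovered by a rejection/downsampling step). Your step (1) (Hanson--Wright on $\mathbf{q}_j^\top\K\mathbf{q}_j$ to get $\max_j \K'_{jj}\le 2k/m$ with failure probability $\delta$ under $k\ge C\log(m/\delta)$) and step (2) (Chernoff on $|V|$ in the HKPV mixture) are fine.

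The genuine gap is in step (3), and it is the crux of the lemma. The statement requires an \emph{exact} coupling: the marginal law of $\setS_{\DPP}$ in the joint construction must be exactly $\DPP(\Q\Lb\Q^\top)$, with only the containment event $\setS_{\DPP}\subseteq\setS$ allowed to fail with probability $\delta'$. Your sampler feeds the HKPV sequential draws from a without-replacement uniform stream and never re-proposes a rejected coordinate; the parenthetical claim that this ``perturbs the acceptance probabilities by a factor $1\pm O(s/m)$, absorbed into constants'' does not produce an exact coupling, and even as an approximate statement the per-step distortions accumulate over $\Theta(k)$ accepted elements and $\Theta(s)$ proposals, so the resulting total-variation error is not obviously small compared to $\delta'$ (e.g.\ $k=\sqrt m$, $s=Ck\log k$ gives accumulated distortion of order $ks/m$, which is not $o(1)$). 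The standard fix --- propose i.i.d.\ uniform candidates with replacement so each acceptance is an exact rejection-sampling step for $\Pr[j]\propto\mPi^{(\ell)}_{jj}$, bound the total number of proposals by a sum of geometrics, and then couple the set of distinct proposed indices with a uniform $s$-subset by completing it with additional uniformly chosen elements --- is absent from your write-up and is precisely the ``distortion-free'' mechanism you defer to in the obstacle paragraph. A second unresolved point sits on top of this: the acceptance-rate analysis needs $\max_j\mPi_{jj}=O(k/m)$ for the \emph{realized} eigenvector set $V$, but your RHT event only controls $\E_V[\mPi_{jj}]=\K'_{jj}\le 2k/m$; the eigenvector-delocalization route you mention loses a $\log(m/\delta)$ factor (and recovering it by concentration over $V$ needs $k$ large relative to additional logarithms), and the proposed alternative of ``arguing on the residuals of $\K'$ directly'' is not carried out. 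Since both the exactness of the coupling and the sharp $O(k/m)$ marginal bound are exactly what the cited proofs establish, the proposal as written does not yet constitute a proof of the lemma.
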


To conclude the expected projection result from the above black-box reduction, \cite{derezinski2024fine} used a classical Cauchy-Binet-type determinantal summation formula (e.g., see Lemma 5 in \cite{derezinski2020improved}), which shows that a DPP-sampled set $\setS_{\DPP}\sim\DPP(\frac1{\bar\lambda}\A\A^\top)$ satisfies $\E[\P_{0,S_\DPP}] = \A^\top(\A\A^\top+\bar\lambda\I)^{-1}\A$, where $\P_{0,S_\DPP}=\A_{S_\DPP}^\top(\A_{S_\DPP}\A_{S_\DPP}^\top)^\dagger\A_{S_\DPP}$ is the standard projection arising in block Kaczmarz. Then, one can convert from $S_{\DPP}$ to $S$ by observing via Lemma \ref{l:dpp-reduction} that $\P_{0,S}\succeq \P_{0,S_{\DPP}}$ with probability $1-\delta'$.

However, since we are bounding the expectation of a \emph{regularized} projection matrix $\P_{\reg,\setS}$, the classical Cauchy-Binet-type formula cannot be directly applied, and we no longer have a simple closed form expression for the expected regularized projection under DPP sampling. To address this, we show in Lemma~\ref{l:mu-reg-exact} that if we sample according to a different DPP defined by matrix $\frac{m}{\bar{\lambda}(m-k)}\bar{\A}\bar{\A}^\top + \frac{k}{m-k}\I$, then we can sufficiently bound the corresponding regularized projection by using the concept of ``Regularized DPPs" originating from \cite{derezinski2020bayesian} (for details see Appendix~\ref{s:appendix_proof_mu}). By combining the above discussion, we formally give the proof of Theorem~\ref{l:mu-reg}. 

\begin{proof}[Proof of Theorem~\ref{l:mu-reg}]
Let $\A = \U\Sig\V^\top$ be its singular value decomposition and denote $\bar{\U} = \Q\U$. Define matrix $\Lb \coloneqq \frac{m}{\bar{\lambda}(m-k)} \A\A^\top + \frac{k}{m-k}\I$, and notice that\footnote{For convenience, in the case of $m > n$, we simply define $\sigma_{n+1} = \cdots = \sigma_m = 0$.}
\begin{align*}
\Q\Lb\Q^\top = & ~
\frac{m}{\bar{\lambda}(m-k)} \bar{\A}\bar{\A}^\top + \frac{k}{m-k}\I \\
= & ~ \bar{\U}\,\diag\Big(\frac{m\sigma_1^2}{\bar{\lambda}(m-k)} + \frac{k}{m-k},\ldots, \frac{m\sigma_m^2}{\bar{\lambda}(m-k)} + \frac{k}{m-k}\Big) \bar{\U}^\top
\end{align*}
is the eigendecomposition of matrix $\Q\Lb\Q^\top$. By setting $\bar{\lambda} = \frac{1}{k}\sum_{i>k}\sigma_i^2$ and denoting $\setS_{\DPP} \sim \DPP(\frac{m}{\bar{\lambda}(m-k)}\bar{\A}\bar{\A}^\top + \frac{k}{m-k}\I)$, we can bound the expected sample size of $\setS_{\DPP}$ using Lemma \ref{l:dpp-size} as follows: 
\begin{align*}
\E[|\setS_{\DPP}|] = \sum_{i=1}^m \frac{\frac{m}{\bar{\lambda}(m-k)}\sigma_i^2 + \frac{k}{m-k}}{\frac{m}{\bar{\lambda}(m-k)}\sigma_i^2 + \frac{k}{m-k} + 1} = \sum_{i=1}^m \frac{m\sigma_i^2 + \bar{\lambda} k}{m\sigma_i^2 + \bar{\lambda} m} = k + \sum_{i=1}^m \frac{(m-k)\sigma_i^2}{m\sigma_i^2 + \bar{\lambda}m} \geq k,
\end{align*}
and a similar calculation shows $\E[|S_{\DPP}|]\leq 3k$.
Given $\delta,\delta' >0$ and $k \geq C \log (m/\delta)$, let $\setS\sim\Uc(m,s)$ be a uniformly random set with $s \geq 3C k \log (3k/\delta')$. By applying Lemma~\ref{l:dpp-reduction} to matrix $\Lb$, conditioned on an RHT property that holds with probability $1-\delta$, we have $\setS_{\DPP} \subseteq \setS$ holds with probability $1-\delta'$.
With the above analysis, we move on to the expectation of the ``regularized'' projection matrix. Using that $0 \leq \reg \leq \frac{k}{m}\bar{\lambda}$, the following holds:
\begin{align}\label{eq:p_lambda}
\P_{\reg,\setS} = & ~ \A_{\setS}^\top(\A_{\setS}\A_{\setS}^\top + \reg\I)^{\dagger}\A_{\setS}\nonumber \\
\succeq & ~ \A_{\setS}^\top \left(\A_{\setS}\A_{\setS}^\top + \frac{k\bar{\lambda}}{m}\I \right)^{-1}\!\!\A_{\setS} = \I - \frac{k\bar{\lambda}}{m} \cdot \left(\A_{\setS}^\top\A_{\setS} + \frac{k\bar{\lambda}}{m}\I \right)^{-1}.
\end{align}
To bound the right hand side of \eqref{eq:p_lambda} we use the following lemma, which bounds the corresponding term when sampling according to this specific DPP. The proof of Lemma~\ref{l:mu-reg-exact} is based on the concept of Regularized DPP proposed by \cite{derezinski2020bayesian}, and we defer it to Appendix~\ref{s:appendix_proof_mu}.

\begin{lemma}\label{l:mu-reg-exact}
Given $\A\in\R^{m\times n}$ and $ k < \rank(\A)$, let  $\bar{\lambda} = \frac{1}{k}\sum_{i>k}\sigma_i^2(\A)$. Then, the random set $\setS_{\DPP} \sim\DPP(\frac{m}{\bar{\lambda}(m-k)}\A\A^\top + \frac{k}{m-k}\I)$ satisfies
\begin{align}\label{eq:mu-reg-exact}
\E\left[\left(\I + \frac{m}{k \bar{\lambda}}\A_{\setS_{\DPP}}^\top\A_{\setS_{\DPP}} \right)^{-1}\right] \preceq \bar{\lambda} \left(\A^\top\A + \bar{\lambda}\I\right)^{-1}.
\end{align}
\end{lemma}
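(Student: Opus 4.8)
The plan is to reformulate the claimed inequality as a lower bound on an expected \emph{regularized projection}, and then to recognize the prescribed determinantal point process as a regularized DPP in the sense of \cite{derezinski2020bayesian}, for which exactly such expectations admit a tractable closed form.

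First I would apply the push-through identity $(\I+c\M^\top\M)^{-1} = \I - \M^\top(c^{-1}\I+\M\M^\top)^{-1}\M$ to both sides. Writing $\lambda' \coloneqq \tfrac{k\bar\lambda}{m}$ (so that $\tfrac{m}{k\bar\lambda}=\tfrac1{\lambda'}$ and, since $k<\rank(\A)\le m$, $\lambda'<\bar\lambda$), the left side of \eqref{eq:mu-reg-exact} becomes $\I - \E[\A_{\setS_{\DPP}}^\top(\A_{\setS_{\DPP}}\A_{\setS_{\DPP}}^\top+\lambda'\I)^{-1}\A_{\setS_{\DPP}}]$ while the right side equals $\bar\lambda(\A^\top\A+\bar\lambda\I)^{-1} = \I - \A^\top(\A\A^\top+\bar\lambda\I)^{-1}\A$. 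Hence \eqref{eq:mu-reg-exact} is equivalent to the PSD lower bound
\begin{align*}
\E\big[\A_{\setS_{\DPP}}^\top(\A_{\setS_{\DPP}}\A_{\setS_{\DPP}}^\top+\lambda'\I)^{-1}\A_{\setS_{\DPP}}\big] ~\succeq~ \A^\top(\A\A^\top+\bar\lambda\I)^{-1}\A,
\end{align*}
that is, the expected $\lambda'$-regularized projection of the subsampled rows dominates the $\bar\lambda$-regularized projection of the full matrix; since $\lambda'<\bar\lambda$, this is a genuine gain that must come from the negative dependence built into the DPP. I would also record the algebraic identity
\begin{align*}
\tfrac{m}{\bar\lambda(m-k)}\A\A^\top + \tfrac{k}{m-k}\I ~=~ \tfrac{k}{m-k}\Big(\I + \tfrac1{\lambda'}\A\A^\top\Big),
\end{align*}
which displays the kernel as a scalar multiple of $\I+\tfrac1{\lambda'}\A\A^\top$: this is precisely the $L$-kernel shape of a $\lambda'$-regularized DPP, with the additive identity term playing the role of the regularization (``virtual'') component and the scalar $\tfrac{k}{m-k}$ pinning the expected sample size, which is shown to be $\Theta(k)$ in the proof of Theorem~\ref{l:mu-reg}.

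Next I would invoke the regularized-DPP expectation identity of \cite{derezinski2020bayesian}. For a matrix $\Y$ and regularizer $\mu>0$, the $\mu$-regularized DPP — an $L$-ensemble whose kernel is proportional to $\I+\tfrac1\mu\Y\Y^\top$, with the proportionality constant controlling the expected size — admits a closed-form expectation for the regularized projection $\Y_S^\top(\Y_S\Y_S^\top+\mu\I)^{-1}\Y_S$ (equivalently for $(\I+\tfrac1\mu\Y_S^\top\Y_S)^{-1}$) that, after accounting for the fraction of rows sampled, matches the corresponding quantity for the full $\Y$ with an appropriately rescaled regularizer. Applying this with $\Y=\A$, $\mu=\lambda'$, and the scale $\tfrac{k}{m-k}$ from above yields a bound of the form $\E[(\I+\tfrac1{\lambda'}\A_S^\top\A_S)^{-1}] \preceq (\I+\tfrac{k}{m\lambda'}\A^\top\A)^{-1}$; since $\tfrac{k}{m\lambda'}=\tfrac1{\bar\lambda}$, the right-hand side is exactly $\bar\lambda(\A^\top\A+\bar\lambda\I)^{-1}$, which is \eqref{eq:mu-reg-exact}. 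Should the imported statement carry one-sided slack (a lower bound on the expected projection with regularizer $\lambda'$ rather than $\bar\lambda$ on the right), I would absorb it via PSD monotonicity, using $\A^\top(\A\A^\top+\lambda'\I)^{-1}\A\succeq\A^\top(\A\A^\top+\bar\lambda\I)^{-1}\A$.

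The main obstacle is this middle step: importing the regularized-DPP expectation identity and verifying that $\DPP(\tfrac{m}{\bar\lambda(m-k)}\A\A^\top+\tfrac{k}{m-k}\I)$ meets its hypotheses (the regularizer value $\lambda'$, the overall scaling, and the role of the additive identity term). The underlying identity is itself proved in \cite{derezinski2020bayesian}, and the natural route — which I believe underlies that construction — is to adjoin $\sqrt{\lambda'}\,\I_n$ as $n$ ``virtual'' rows so that $\A^\top\A+\lambda'\I$ becomes an ordinary Gram matrix, run a non-regularized DPP on the augmented ground set, and apply the classical expected-projection formula for DPPs (Lemma 5 of \cite{derezinski2020improved}); the delicate point is that the selected virtual rows contribute only a random $0/1$-diagonal multiple of $\lambda'$ rather than the full $\lambda'\I$, and the regularized DPP is engineered precisely so that integrating this contribution out reproduces the clean regularizer. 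Once that identity is available, the reduction above is routine algebra.
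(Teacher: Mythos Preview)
Your proposal is correct and follows essentially the same route as the paper: recognize the prescribed DPP kernel as that of a regularized DPP from \cite{derezinski2020bayesian}, then invoke their expectation inequality to obtain $\E[(\I+\tfrac1{\lambda'}\A_S^\top\A_S)^{-1}]\preceq(\I+\tfrac{k}{m\lambda'}\A^\top\A)^{-1}=\bar\lambda(\A^\top\A+\bar\lambda\I)^{-1}$. The paper makes this precise by quoting two specific results: Lemma~7 of \cite{derezinski2020bayesian}, which identifies $\DPP(\tfrac{m}{\bar\lambda(m-k)}\A\A^\top+\tfrac{k}{m-k}\I)$ with $\RDPP_p(\A,\tfrac{k}{m}\bar\lambda)$ for the uniform vector $p=(k/m,\dots,k/m)$ (this is where your factor $k/m$ on the right-hand side comes from, not directly from the scale $\tfrac{k}{m-k}$), and Lemma~11 of \cite{derezinski2020bayesian}, which gives $\E[(\A_S^\top\A_S+\lambda\I)^{-1}]\preceq(\sum_i p_i\a_i\a_i^\top+\lambda\I)^{-1}$. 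Your initial push-through reformulation into a projection lower bound is a harmless detour---you end up applying the cited inequality directly to $(\I+\tfrac1{\lambda'}\A_S^\top\A_S)^{-1}$ anyway, just as the paper does.
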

Conditioned on the event $\mathcal{E} := [\setS_{\DPP} \subseteq \setS]$ (which holds with probability $1-\delta'$), we have $\A_{\setS_{\DPP}}^\top \A_{\setS_{\DPP}} \preceq \A_{\setS}^\top \A_{\setS}$.
Combining this with \eqref{eq:p_lambda} and Lemma~\ref{l:mu-reg-exact}, we have the following holds:
\begin{align*}
\E[\P_{\reg, \setS}] = & ~ \E[\P_{\reg, \setS} \mid \mathcal{E}]\Pr\{\mathcal{E}\} + \E[\P_{\reg, \setS} \mid \neg \mathcal{E}]\Pr\{\neg \mathcal{E}\} \\
\succeq & ~ \E[\P_{\reg, \setS} \mid \mathcal{E}]\Pr\{\mathcal{E}\} \\
\succeq & ~ \Pr\{\mathcal{E}\} \cdot\I - \frac{k\bar{\lambda}}{m} \cdot \E\left[\left(\A_{\setS}^\top \A_{\setS} + \frac{k\bar{\lambda}}{m}\I \right)^{-1} \biggl\vert \mathcal{E} \right] \cdot \Pr\{\mathcal{E}\} \\
\succeq & ~ \Pr\{\mathcal{E}\} \cdot \I - \E\left[\left(\frac{m}{k\bar{\lambda}}\A_{\setS_{\DPP}}^\top\A_{\setS_{\DPP}} + \I \right)^{-1} \biggl\vert \mathcal{E} \right] \cdot \Pr\{\mathcal{E}\} \\
\succeq & ~ (1-\delta')\cdot \I - \bar{\lambda}(\A^\top\A + \bar{\lambda}\I)^{-1} 
=  ~ \A^\top(\A\A^\top + \bar{\lambda}\I)^{-1} \A - \delta'\I.
\end{align*}
Notice that the spectrum of matrix $\A^\top(\A\A^\top + \bar{\lambda}\I)^{-1} \A$ can be expressed as $\{\frac{\sigma_i^2}{\sigma_i^2 + \bar{\lambda}}\}_i$, thus with the choice of $\bar{\lambda} = \frac{1}{k}\sum_{i>k}\sigma_i^2$ we have
\begin{align*}
\A^\top(\A\A^\top + \bar{\lambda}\I)^{-1} \A \succeq
\frac{(\sigma_{\min}^+)^2}{(\sigma_{\min}^+)^2 + \bar{\lambda}} \I = \frac{1}{1 + \frac{r-k}{k}\bar{\kappa}_k^2} \I \succeq \frac{k}{r \bar{\kappa}_k^2} \I \succeq \frac{k}{m \bar{\kappa}_k^2} \I.
\end{align*}
By choosing $\delta' = \frac{k}{2r\bar{\kappa}_k^2}$ we have $\delta'\I \preceq \frac{1}{2}\A^\top(\A\A^\top + \bar{\lambda}\I)^{-1} \A$, which gives $\E[\P_{\reg, \setS}] \succeq  \frac{1}{2} \A^\top(\A\A^\top + \bar{\lambda}\I)^{-1} \A$,
and the sample size needs to satisfy $s \geq O(k\log (k/\delta')) =  O(k \log (r\bar{\kappa}_k))$. We also conclude that $\mu(\bar\A,\Uc(m,s),\lambda) = \lambda_{\min}^+(\E[\P_{\reg,S}]) \geq \frac{k}{2r \bar{\kappa}_k^2}$. 
\end{proof}

\subsection{Variance of the Regularized Projection}
\label{s:nu}
We now turn to bounding the term 
$\nu = \lambda_{\max}(\E[(\bar\P_{\reg}^{\dagger/2}\P_{\reg,S}\bar\P_{\reg}^{\dagger/2})^2])$, where $\bar\P_{\reg}=\E[\P_{\reg,S}]$, which intuitively describes a notion of variance for the regularized projection $\P_{\reg,S}$. This quantity was first introduced by \cite{gower2018accelerated} in the case of $\reg = 0$. They showed that $\frac rs \leq \nu\leq \frac1\mu$ for any matrix $\A$ of rank $r$ and random blocks $S$ of size $s$, which unfortunately does not provide any acceleration guarantee. Recently, \cite{derezinski2024fine} gave an improved upper bound, but it came with trade-offs: Their bound, $\nu= \tilde O(\frac rs\bar\kappa_{k:O(k\log k)}^2)$, where $\bar\kappa_{k,l}^2:=\frac1{l-k}\sum_{i=k+1}^l\sigma_i^2(\A)$, requires replacing block sampling with a much more expensive sketching approach, due to their reliance on sophisticated tools from random matrix theory, and yet, it is still affected by a problem-dependent condition number $\bar\kappa_{k,l}$.

We use regularized projections to entirely avoid these trade-offs: Not only are we able to use block sampling (as opposed to expensive sketching), but also our proof is surprisingly elementary, and with the right choice of $\reg$, we get a bound of $\nu = \tilde O(\frac rs)$, without any problem-dependent condition number factors. 

\begin{theorem}\label{l:nu-reg}
Given matrix $\A\in\R^{m\times n}$, 
parameters $\bar{\lambda} \geq \reg > 0$, and a probability distribution $\Dc$ over subsets of $[m]$, suppose that the corresponding regularized projection matrix $\P_{\reg,S} \coloneqq \A_{\setS}^\top(\A_{\setS}\A_{\setS}^\top + \reg\I)^{-1}\A_{\setS}$ satisfies:
\begin{align*}
    \bar\P_\lambda:=\E_{S\sim\Dc}[\P_{\lambda,S}] \succeq c\A^\top\A(\A^\top\A+\bar\lambda\I)^{-1},
\end{align*}
for some $c\in (0,1]$. Then, it follows that:
\begin{align*}
\lambda_{\max}\Big(\E\big[(\bar\P_{\reg}^{\dagger/2}\P_{\reg,S}\bar\P_{\reg}^{\dagger/2})^2\big]\Big)\leq \frac{2\bar\lambda}{c\reg}.
\end{align*}
If we further assume that $\A_S^\top\A_S\preceq\alpha\A^\top\A$ with probability $1-\delta$ for some $\alpha\in[0,1]$ and $\delta \in [0, \alpha / \|\bar\P_\lambda^\dagger\|]$, then we can obtain the following potentially sharper bound:
\begin{align*}
\lambda_{\max}\Big(\E\big[(\bar\P_{\reg}^{\dagger/2}\P_{\reg,S}\bar\P_{\reg}^{\dagger/2})^2\big]\Big)
\leq \frac{2}{c}\bigg(1 + \alpha\frac{\bar\lambda}{\lambda}\bigg).
\end{align*}
\end{theorem}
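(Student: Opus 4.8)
The quantity to bound is $\nu := \lambda_{\max}(\E[(\bar\P_\lambda^{\dagger/2}\P_{\lambda,S}\bar\P_\lambda^{\dagger/2})^2])$, and the key structural fact is that each $\P_{\lambda,S}$ is a \emph{regularized} projection, so $\mathbf 0 \preceq \P_{\lambda,S} \preceq \I$ (the eigenvalues of $\A_S^\top(\A_S\A_S^\top+\lambda\I)^{-1}\A_S$ are $\sigma_i^2/(\sigma_i^2+\lambda)\in[0,1)$). Also note the matrix $\M := \A^\top\A(\A^\top\A+\bar\lambda\I)^{-1}$ commutes with $\bar\P_\lambda$ only on its range; I would actually work with the cleaner operator bound $\P_{\lambda,S}\preceq \M' := \A_S^\top\A_S (\A_S^\top\A_S+\lambda\I)^{-1}$-type expressions and reduce everything to scalar functions of $\A_S^\top\A_S$ and $\A^\top\A$ via simultaneous diagonalization where possible, or more robustly via the Loewner order alone. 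The plan is to first prove the coarse bound $\nu\le 2\bar\lambda/(c\lambda)$, then refine it to $\tfrac2c(1+\alpha\bar\lambda/\lambda)$ under the extra assumption.

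\textbf{Step 1: reduce the square to a single power.}
Since $\mathbf 0\preceq \P_{\lambda,S}\preceq \I$, and conjugation by a fixed PSD matrix preserves the Loewner order, we have $\mathbf 0 \preceq \bar\P_\lambda^{\dagger/2}\P_{\lambda,S}\bar\P_\lambda^{\dagger/2}$, but this matrix need not be bounded by $\I$. Instead I would use the elementary fact that for PSD $X$, $X^2 \preceq \|X\| \cdot X$, hence
\begin{align*}
\E\big[(\bar\P_\lambda^{\dagger/2}\P_{\lambda,S}\bar\P_\lambda^{\dagger/2})^2\big] \preceq \Big(\sup_S \|\bar\P_\lambda^{\dagger/2}\P_{\lambda,S}\bar\P_\lambda^{\dagger/2}\|\Big)\cdot \E\big[\bar\P_\lambda^{\dagger/2}\P_{\lambda,S}\bar\P_\lambda^{\dagger/2}\big] = \Big(\sup_S \|\cdots\|\Big)\cdot \vPi,
\end{align*}
where $\vPi = \bar\P_\lambda^{\dagger/2}\bar\P_\lambda\bar\P_\lambda^{\dagger/2}$ is the orthogonal projection onto $\range(\A)$, so $\lambda_{\max}$ of the right side equals $\sup_S \|\bar\P_\lambda^{\dagger/2}\P_{\lambda,S}\bar\P_\lambda^{\dagger/2}\|$. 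Thus it suffices to bound this single operator norm.

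\textbf{Step 2: bound the conjugated projection using the expectation lower bound.}
From the hypothesis $\bar\P_\lambda \succeq c\,\A^\top\A(\A^\top\A+\bar\lambda\I)^{-1}$, I get $\bar\P_\lambda^\dagger \preceq \tfrac1c(\A^\top\A+\bar\lambda\I)(\A^\top\A)^\dagger$ on $\range(\A)$ (inverting the Loewner order on the common range), hence $\|\bar\P_\lambda^{\dagger/2}\P_{\lambda,S}\bar\P_\lambda^{\dagger/2}\| \le \tfrac1c\,\|(\A^\top\A+\bar\lambda\I)^{1/2}(\A^\top\A)^{\dagger/2}\,\P_{\lambda,S}\,(\A^\top\A)^{\dagger/2}(\A^\top\A+\bar\lambda\I)^{1/2}\|$, all restricted to $\range(\A)$. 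Now I bound $\P_{\lambda,S} = \A_S^\top(\A_S\A_S^\top+\lambda\I)^{-1}\A_S \preceq \tfrac1\lambda \A_S^\top\A_S \preceq \tfrac1\lambda \A^\top\A$. Substituting gives $\|\bar\P_\lambda^{\dagger/2}\P_{\lambda,S}\bar\P_\lambda^{\dagger/2}\| \le \tfrac1{c\lambda}\|(\A^\top\A+\bar\lambda\I)^{1/2}(\A^\top\A)^{\dagger/2}\,\A^\top\A\,(\A^\top\A)^{\dagger/2}(\A^\top\A+\bar\lambda\I)^{1/2}\|$, and on $\range(\A)$ the inner product telescopes to $(\A^\top\A+\bar\lambda\I)$ times the identity on the range up to the scalar $\lambda_{\max}$, which is $\sigma_1^2+\bar\lambda$. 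That is too big — so instead I should keep $\P_{\lambda,S}\preceq \I$ and bound $\|(\A^\top\A+\bar\lambda\I)^{1/2}(\A^\top\A)^\dagger(\A^\top\A+\bar\lambda\I)^{1/2}\|$ on $\range(\A)$, giving $\tfrac1c\max_i (\sigma_i^2+\bar\lambda)/\sigma_i^2$. The right move is to interpolate: use $\P_{\lambda,S}\preceq \min\{\I,\ \tfrac1\lambda\A_S^\top\A_S\}$, splitting the spectrum of $\A^\top\A$ at the threshold $\lambda$, and in each piece one of the two bounds gives exactly the claimed $2\bar\lambda/(c\lambda)$ (using $\bar\lambda\ge\lambda$). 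Concretely: on eigenspaces with $\sigma_i^2\le\bar\lambda$ use $\P_{\lambda,S}\preceq\tfrac1\lambda\A_S^\top\A_S\preceq\tfrac1\lambda\A^\top\A$ and $(\sigma_i^2+\bar\lambda)/\sigma_i^2 \cdot \sigma_i^2/\lambda \le 2\bar\lambda/\lambda$; on eigenspaces with $\sigma_i^2>\bar\lambda$ use $\P_{\lambda,S}\preceq\I$ and $(\sigma_i^2+\bar\lambda)/\sigma_i^2 < 2 \le 2\bar\lambda/\lambda$.

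\textbf{Step 3: the sharper bound.}
Under the extra assumption $\A_S^\top\A_S\preceq\alpha\A^\top\A$ with probability $1-\delta$, I split the expectation over the good event $\mathcal E$ and its complement. On $\mathcal E$: use $\P_{\lambda,S}\preceq\tfrac1\lambda\A_S^\top\A_S\preceq\tfrac\alpha\lambda\A^\top\A$ and also $\P_{\lambda,S}\preceq\I$; taking the right combination and conjugating by $\bar\P_\lambda^{\dagger/2}$ as above yields a contribution of at most $\tfrac1c(1+\alpha\bar\lambda/\lambda)\cdot\vPi$ in Loewner order (same spectral split, now with the $\alpha$ factor in the small-eigenvalue regime). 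On $\neg\mathcal E$: bound $(\bar\P_\lambda^{\dagger/2}\P_{\lambda,S}\bar\P_\lambda^{\dagger/2})^2 \preceq \|\bar\P_\lambda^\dagger\|^2\vPi \preceq \|\bar\P_\lambda^\dagger\|\cdot\tfrac1c(1+\bar\lambda/\lambda)\vPi$-type crude bound, multiply by $\Pr(\neg\mathcal E)=\delta$, and use $\delta\le\alpha/\|\bar\P_\lambda^\dagger\|$ to absorb it into the same order of magnitude, so the total stays $\le\tfrac2c(1+\alpha\bar\lambda/\lambda)$.

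\textbf{Main obstacle.}
The delicate point is Step 1 / Step 2 handled together: $\bar\P_\lambda^{\dagger/2}\P_{\lambda,S}\bar\P_\lambda^{\dagger/2}$ and $\bar\P_\lambda$ do not commute with $\A^\top\A$ in general, so I cannot literally simultaneously diagonalize; I must argue purely through the Loewner order and the identity $\X^2\preceq\|\X\|\X$, and be careful that $\P_{\lambda,S}\preceq\I$ does not by itself control the norm after conjugation by the (possibly ill-conditioned) $\bar\P_\lambda^\dagger$ — this is exactly why the lower bound hypothesis $\bar\P_\lambda\succeq c\,\A^\top\A(\A^\top\A+\bar\lambda\I)^{-1}$ is needed, and why regularization ($\lambda>0$) is essential: it is what makes $\sigma_i^2/\lambda$ an available bound that matches the $(\sigma_i^2+\bar\lambda)/\sigma_i^2$ blowup of $\bar\P_\lambda^\dagger$. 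Getting the two-regime spectral split to produce the clean constant $2$ (rather than a $\bar\lambda/\lambda$-dependent mess) under the sole assumption $\bar\lambda\ge\lambda$ is the computational heart of the argument.
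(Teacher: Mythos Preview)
Your spectral-split argument in Step~2 does not go through, and you already flag the reason in your ``Main obstacle'': since $\P_{\lambda,S}$ does not commute with $\A^\top\A$, you cannot treat ``eigenspaces with $\sigma_i^2\le\bar\lambda$'' and ``eigenspaces with $\sigma_i^2>\bar\lambda$'' separately. From $\P_{\lambda,S}\preceq\I$ and $\P_{\lambda,S}\preceq\tfrac1\lambda\A^\top\A$ alone you cannot deduce $\P_{\lambda,S}\preceq\min(\I,\tfrac1\lambda\A^\top\A)$ in the Loewner order, so neither of your two bounds conjugated by $\M^{\dagger/2}$ gives the target $2\bar\lambda/\lambda$ uniformly (one blows up for small $\sigma_i$, the other for large). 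Your promise to ``argue purely through the Loewner order'' is exactly what is missing. There \emph{is} a single Loewner bound that rescues your strategy: since $t\mapsto t/(t+\lambda)$ is operator monotone, $\A_S^\top\A_S\preceq\A^\top\A$ gives $\P_{\lambda,S}\preceq \A^\top\A(\A^\top\A+\lambda\I)^{-1}$, and then $\|\M^{\dagger/2}\P_{\lambda,S}\M^{\dagger/2}\|\le\max_i\frac{\sigma_i^2+\bar\lambda}{\sigma_i^2+\lambda}\le\bar\lambda/\lambda$. With this fix, Step~1 plus Step~2 yields the first claim (and similarly on $\mathcal E$ with $\alpha\A^\top\A$ for the sharper one). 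Your treatment of $\neg\mathcal E$ in Step~3 is also loose as written: bounding $(\bar\P_\lambda^{\dagger/2}\P_{\lambda,S}\bar\P_\lambda^{\dagger/2})^2\preceq\|\bar\P_\lambda^\dagger\|^2\vPi$ and multiplying by $\delta\le\alpha/\|\bar\P_\lambda^\dagger\|$ leaves a factor $\alpha\|\bar\P_\lambda^\dagger\|$, which is not controlled by $1+\alpha\bar\lambda/\lambda$. You need instead to use $X_S^2\preceq\|X_S\|X_S$ with the uniform bound $\|X_S\|\le\bar\lambda/(c\lambda)$, together with $\E[\P_{\lambda,S}\mathbf 1_{\neg\mathcal E}]\preceq\delta\I$, to get $\E[X_S^2\mathbf 1_{\neg\mathcal E}]\preceq \tfrac{\bar\lambda}{c\lambda}\cdot\delta\|\bar\P_\lambda^\dagger\|\vPi\le\tfrac{\alpha\bar\lambda}{c\lambda}\vPi$.

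The paper's proof avoids all of this by a different decomposition: it writes the square as $\bar\P_\lambda^{\dagger/2}\P_{\lambda,S}\,\bar\P_\lambda^\dagger\,\P_{\lambda,S}\bar\P_\lambda^{\dagger/2}$ and bounds only the \emph{middle} $\bar\P_\lambda^\dagger\preceq\tfrac1c(\I+\bar\lambda(\A^\top\A)^\dagger)$. The $\I$-part gives $\P_{\lambda,S}^2\preceq\P_{\lambda,S}$, whose expectation cancels the outer $\bar\P_\lambda^{\dagger/2}$'s to yield a clean $1/c$. The $(\A^\top\A)^\dagger$-part becomes $\A_S^\top(\A_S\A_S^\top+\lambda\I)^{-1}\A_S(\A^\top\A)^\dagger\A_S^\top(\A_S\A_S^\top+\lambda\I)^{-1}\A_S$, and since $\|\A_S(\A^\top\A)^\dagger\A_S^\top\|\le\alpha$ on $\mathcal E$ (or $\le 1$ always), this is $\preceq\tfrac\alpha\lambda\P_{\lambda,S}$, again collapsing against the outer conjugation after taking expectation. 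This route never needs operator monotonicity or a pointwise sup over $S$, and the bad-event term enters only once, linearly in $\|\bar\P_\lambda^\dagger\|$.
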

Note that the second part of Theorem~\ref{l:nu-reg} is (up to a constant) a generalization of the first part: when we set $\alpha = 1$, then naturally we have that $\A_S^\top \A_S \preceq \A^\top\A$ holds with probability~$1$ (that is, $\delta = 0$), and the bound on the right hand side becomes $\frac{2}{c}(1+\frac{\bar\lambda}{\lambda}) \leq \frac{4\bar\lambda}{c\lambda}$. However, a simple matrix concentration argument, given in the following lemma, shows that in the over-determined case where $m \gg r=\rank(\A)$, we can give a sharper bound of $\alpha= O(\frac{r}{m} \log(n/\delta))$.
\begin{lemma}\label{l:nu_misc}
Suppose matrix $\A\in\R^{m \times n}$ with rank $r$ is transformed by RHT. Let $S \sim \Uc(m,s)$ be a uniformly random subset of $[m]$ with size $s \leq r$. Conditioned on an event that happens with probability $1-\delta$ and only depends on RHT, with probability $1-\delta'$ we have the following bound:
\begin{align*}
\A_S^\top\A_S \preceq \frac{(4r + 32\log(m/\delta)) \cdot \log(n/\delta')}{m} \cdot \A^\top\A.
\end{align*}
\end{lemma}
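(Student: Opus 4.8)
The plan is to prove Lemma~\ref{l:nu_misc} via a matrix Chernoff (or matrix Bernstein) bound applied to the random submatrix $\A_S^\top\A_S$, reducing everything to the incoherence property guaranteed by RHT preprocessing. After applying RHT, writing $\bar\A = \Q\A = \bar\U\Sig\V^\top$ with $\bar\U = \Q\U$, the key deterministic consequence of the RHT (the ``RHT property'' referenced in Lemma~\ref{l:dpp-reduction}) is row-incoherence: with probability $1-\delta$ over the random signs, $\max_i \|\e_i^\top\bar\U\|^2 = O\big(\frac{r + \log(m/\delta)}{m}\big)$. I would first state and invoke this incoherence bound (it follows from a standard Hadamard/Johnson--Lindenstrauss-type argument, e.g.\ as in \cite{tropp2011improved}), noting that the constant $4r + 32\log(m/\delta)$ in the statement is exactly the explicit form of this bound. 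This is the only place RHT enters, matching the remark after Theorem~\ref{l:mu-reg}.

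Next, I would set up the concentration argument in the ``whitened'' coordinates. Let $\A^\top\A = \V\Sig^2\V^\top$ and work with $\M := (\A^\top\A)^{\dagger/2}\A_S^\top\A_S(\A^\top\A)^{\dagger/2}$, which lives on the range of $\A$ (an $r$-dimensional space) and whose largest eigenvalue is exactly the smallest $\alpha$ for which $\A_S^\top\A_S\preceq\alpha\A^\top\A$. Write $\M = \sum_{i\in S}\z_i\z_i^\top$ where $\z_i := (\A^\top\A)^{\dagger/2}\A^\top\e_i = \V\e_i^\top\bar\U{}^\top$ restricted to the range; since $S\sim\Uc(m,s)$ is a uniform size-$s$ subset, this is a sum of $s$ vectors sampled without replacement from $\{\z_1,\ldots,\z_m\}$, with $\frac{1}{m}\sum_{i=1}^m \z_i\z_i^\top = \frac{1}{m}\Pi_{\range}$ where $\Pi_{\range}$ is the identity on the $r$-dimensional range. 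The per-term bound is $\|\z_i\|^2 = \|\e_i^\top\bar\U\|^2 \leq \frac{4r+32\log(m/\delta)}{m} =: R$ by incoherence.

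Then I would apply the matrix Chernoff inequality for sampling without replacement (e.g.\ Tropp's version, which is dominated by the with-replacement case) to the sum $\M = \sum_{i\in S}\z_i\z_i^\top$: with $\E[\M] = \frac{s}{m}\Pi_{\range}$ so $\lambda_{\max}(\E\M) = s/m =: \mu_{\max}$ and per-term norm bound $R$, the upper-tail Chernoff bound gives $\Pr\big[\lambda_{\max}(\M) \geq t\big] \leq r\cdot\exp(-(t-\mu_{\max})^2/(\text{const}\cdot R t))$ type decay; choosing $t = \Theta(R\log(n/\delta'))$ (which dominates $\mu_{\max} = s/m \leq r/m \leq R$ since $s\leq r$) makes the failure probability at most $\delta'$, with $r$ replaced by $n$ in the union-bound prefactor as stated. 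Unwinding the whitening, $\lambda_{\max}(\M)\leq t$ is precisely $\A_S^\top\A_S \preceq t\cdot\A^\top\A$ with $t = \frac{(4r+32\log(m/\delta))\log(n/\delta')}{m}$ (absorbing absolute constants into the explicit ones, as the paper's convention $C\geq 64$ permits).

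The main obstacle is bookkeeping the exact constants so that they land on $4r+32\log(m/\delta)$ and the clean $\log(n/\delta')/m$ factor rather than a generic big-$O$: this requires using the explicit constants in both the RHT incoherence bound and the matrix Chernoff tail, and carefully checking that $\mu_{\max}=s/m$ is indeed dominated by the $R\log(n/\delta')$ term (using $s\leq r$), so that the tail bound is not degraded. A secondary, minor subtlety is justifying the use of a without-replacement matrix Chernoff bound; I would handle this by citing the standard reduction that sampling without replacement is majorized (in the Laplace-transform sense) by sampling with replacement for sums of fixed PSD matrices, so the with-replacement Chernoff bound applies verbatim. Everything else is routine.
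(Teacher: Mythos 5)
Your proposal is correct and takes essentially the same route as the paper's proof: whiten by $(\A^\top\A)^{\dagger/2}$, bound the row norms of $\Q\U$ via the RHT incoherence lemma (Tropp), and apply a matrix Chernoff bound to the uniformly sampled rows, using $s\leq r$ to keep the mean term dominated. The only minor inaccuracy is the attribution of the constant: the incoherence bound itself yields $2r+16\log(m/\delta)$, and the extra factor of $2$ in $4r+32\log(m/\delta)$ comes from the Chernoff threshold $(1+\epsilon)\mu_{\max}=2R\log(n/\delta')$, not from the incoherence bound alone.
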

By combining Theorem~\ref{l:nu-reg} and Lemma~\ref{l:nu_misc}, we have the following corollary.
\begin{corollary}
\label{cor:nu}
Suppose matrix $\A\in \R^{m \times n}$ with $\rank(\A) = r$ is transformed by RHT. Given $\delta\in(0,1)$ and $C\log(m/\delta) \leq k < r$, let $\bar{\lambda} = \frac{1}{k}\sum_{i>k} \sigma_i^2$. Let $S \sim \Uc(m,s)$ be a uniformly random subset of $[m]$ with size $s\in [Ck \log (m\bar\kappa_k), r]$. Then for any $0< \lambda \leq \frac{k}{m} \bar{\lambda}$, conditioned on an event that happens with probability $1-\delta$ and only depends on RHT, we have 
\begin{align*}
\lambda_{\max}\Big(\E\big[(\bar\P_{\reg}^{\dagger/2}\P_{\reg,S}\bar\P_{\reg}^{\dagger/2})^2\big]\Big)
\leq \frac{4\bar\lambda}{\lambda} \cdot \min\left\{1, \frac{4r}{m} \log(mn \bar{\kappa}_k) \right\}.
\end{align*}
By further choosing $\lambda = \frac{k}{m}\bar{\lambda}$, we have
\begin{align*}
\nu(\bar\A,\Uc(m,s),\lambda)\leq \min\left\{\frac{4m}{k}, \frac{16r}{k} \log(mn\bar{\kappa}_k) \right\}.
\end{align*}
\end{corollary}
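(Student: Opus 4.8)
\emph{Proof proposal.}
The plan is to obtain both branches of the minimum by feeding the expectation estimate of Theorem~\ref{l:mu-reg} into Theorem~\ref{l:nu-reg} and combining its two parts, using Lemma~\ref{l:nu_misc} to certify the extra concentration hypothesis needed for the sharper part. First I would invoke Theorem~\ref{l:mu-reg}: since $s\geq Ck\log(m\bar\kappa_k)$ and $0<\lambda\leq\frac{k}{m}\bar\lambda$, conditioned on the RHT event of probability $1-\delta$ the transformed matrix satisfies $\bar\P_\lambda=\E_{S\sim\Uc(m,s)}[\P_{\lambda,S}]\succeq\frac12\A^\top(\A\A^\top+\bar\lambda\I)^{-1}\A=\frac12\,\A^\top\A(\A^\top\A+\bar\lambda\I)^{-1}$, where the last equality is the push-through identity $\A^\top(\A\A^\top+\bar\lambda\I)^{-1}=(\A^\top\A+\bar\lambda\I)^{-1}\A^\top$. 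This is exactly the hypothesis of Theorem~\ref{l:nu-reg} with $c=\tfrac12$, and the remark after Theorem~\ref{l:mu-reg} additionally gives $\mu\geq\frac{k}{2r\bar\kappa_k^2}$, i.e.\ $\|\bar\P_\lambda^\dagger\|=1/\mu\leq\frac{2r\bar\kappa_k^2}{k}$. Applying the first part of Theorem~\ref{l:nu-reg} then immediately yields $\nu\leq\frac{2\bar\lambda}{c\lambda}=\frac{4\bar\lambda}{\lambda}$, which is the ``$1$'' branch of the claimed minimum.

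For the sharper branch I would use Lemma~\ref{l:nu_misc}, which is applicable since $s\leq r$: conditioned on an RHT event, with probability $1-\delta'$ we have $\A_S^\top\A_S\preceq\alpha\,\A^\top\A$ with $\alpha=\frac{(4r+32\log(m/\delta))\log(n/\delta')}{m}$; using $r>k\geq C\log(m/\delta)$ and $C\geq 64$ this collapses to $\frac{4r}{m}\leq\alpha\leq\frac{5r\log(n/\delta')}{m}$. To be entitled to use the second part of Theorem~\ref{l:nu-reg} I need the failure probability to satisfy $\delta'\leq\alpha/\|\bar\P_\lambda^\dagger\|$; since $\alpha\geq 4r/m$ and $\|\bar\P_\lambda^\dagger\|\leq 2r\bar\kappa_k^2/k$, it suffices that $\delta'\leq\frac{2k}{m\bar\kappa_k^2}$, so I would fix $\delta'=\frac{k}{m\bar\kappa_k^2}$ and note that the constraint only relaxes as $\delta'$ shrinks (it enters $\alpha$ only through a logarithm, so there is no true circularity). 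With this choice $\log(n/\delta')\leq 2\log(mn\bar\kappa_k)$, hence $\alpha=O\!\big(\tfrac{r}{m}\log(mn\bar\kappa_k)\big)$.

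The second part of Theorem~\ref{l:nu-reg} now gives $\nu\leq\frac{2}{c}\big(1+\alpha\tfrac{\bar\lambda}{\lambda}\big)=4+4\alpha\tfrac{\bar\lambda}{\lambda}$. Since $\lambda\leq\tfrac{k}{m}\bar\lambda$ we have $\tfrac{\bar\lambda}{\lambda}\geq\tfrac{m}{k}$, so $4\alpha\tfrac{\bar\lambda}{\lambda}\geq\tfrac{16r}{k}\geq 16$ (using $r>k$), whereby the additive constant $4$ is absorbed into the $\alpha\tfrac{\bar\lambda}{\lambda}$ term; plugging in the bound on $\alpha$ then produces $\nu=O\!\big(\tfrac{r\bar\lambda}{m\lambda}\log(mn\bar\kappa_k)\big)$, which, after tracking the numerical constants, is the second branch $\frac{4\bar\lambda}{\lambda}\cdot\frac{4r}{m}\log(mn\bar\kappa_k)$. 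Taking the minimum of the two estimates establishes the first displayed bound, and substituting $\lambda=\tfrac{k}{m}\bar\lambda$ (so that $\tfrac{4\bar\lambda}{\lambda}=\tfrac{4m}{k}$) yields $\nu\leq\min\{\tfrac{4m}{k},\tfrac{16r}{k}\log(mn\bar\kappa_k)\}$.

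The difficulty here is bookkeeping rather than conceptual. Three points need care: (i) resolving the mild interaction between $\alpha$ and $\delta'$ cleanly, which works precisely because $\alpha\geq 4r/m$ is bounded below independently of $\delta'$; (ii) ensuring the RHT-dependent events in Theorem~\ref{l:mu-reg} and Lemma~\ref{l:nu_misc} can be taken as a single event holding with probability $1-\delta$ (both are the incoherence/flattening property guaranteed by the transform, so this is cosmetic up to absolute constants, or one replaces $\delta$ by $\delta/2$); and (iii) pinning down the precise constant $16$ in the final bound, which forces one to use $\nu\geq 1$, $r>k$, $\bar\lambda/\lambda\geq m/k$, and $C\geq 64$ carefully when absorbing lower-order terms.
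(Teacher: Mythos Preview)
Your proposal is correct and follows essentially the same route as the paper: feed Theorem~\ref{l:mu-reg} into Theorem~\ref{l:nu-reg} with $c=\tfrac12$, obtain the first branch directly from the first part of Theorem~\ref{l:nu-reg}, and for the second branch invoke Lemma~\ref{l:nu_misc} with a suitably small $\delta'$, then absorb the additive constant using $\bar\lambda/\lambda\geq m/k$. The only cosmetic difference is the choice of $\delta'$: the paper takes $\delta'=\tfrac{r}{m}\mu=\tfrac{r}{m\|\bar\P_\lambda^\dagger\|}$ (which makes the constraint $\delta'\leq\alpha/\|\bar\P_\lambda^\dagger\|$ an equality up to the lower bound $\alpha\geq r/m$), whereas you take $\delta'=k/(m\bar\kappa_k^2)$; both lead to $\log(n/\delta')=O(\log(mn\bar\kappa_k))$ and the same final constants after the simplifications you describe.
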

We are now ready to present the proof of Theorem \ref{l:nu-reg}.

\begin{proof}[Proof of Theorem~\ref{l:nu-reg}]
By using the assumption on $\bar{\P}_{\reg} = \E[\P_{\reg, S}]$, we can bound the pseudoinverse of this matrix as follows:
\begin{align}\label{eq:nu-tech}
\bar{\P}_{\reg}^{\dagger} \preceq \frac{1}{c}\left(\A^\top\A + \bar{\lambda}\I \right) \left(\A^\top\A \right)^{\dagger} \preceq \frac1c\Big(\I + \bar{\lambda}\left(\A^\top\A\right)^{\dagger}\Big),
\end{align}
which gives
\begin{align}\label{eq:nu_bound}
\nu = & ~ \left\|\E[\bar{\P}_{\reg}^{\dagger/2} \P_{\reg,S} \bar{\P}_{\reg}^{\dagger} \P_{\reg,S}\bar{\P}_{\reg}^{\dagger/2}] \right\| = \left\|\bar{\P}_{\reg}^{\dagger/2}\E[ \P_{\reg,S} \bar{\P}_{\reg}^{\dagger} \P_{\reg,S}]\bar{\P}_{\reg}^{\dagger/2} \right\| \nonumber \\
\overset{\eqref{eq:nu-tech}}{\leq} & ~ \frac{1}{c} \left\|\bar{\P}_{\reg}^{\dagger/2}\E[\P_{\reg,S}^2 +  \bar{\lambda}\P_{\reg,S}(\A^\top\A)^{\dagger} \P_{\reg,S}]\bar{\P}_{\reg}^{\dagger/2} \right\| \nonumber \\
\leq & ~ \frac{1}{c} \left\|\bar{\P}_{\reg}^{\dagger/2}\left(\E[\P_{\reg,S}] + \bar{\lambda} \E[\P_{\reg,S}(\A^\top\A)^{\dagger} \P_{\reg,S}] \right)\bar{\P}_{\reg}^{\dagger/2} \right\| \nonumber \\
\leq & ~ \frac{1}{c} + \frac{\bar{\lambda}}{c} \left\|\bar{\P}_{\reg}^{\dagger/2}\E[\P_{\reg,S}(\A^\top\A)^{\dagger} \P_{\reg,S}] \bar{\P}_{\reg}^{\dagger/2} \right\|.
\end{align}
By expanding $\P_{\reg,S}$, we can express the middle term in \eqref{eq:nu_bound} as follows:
\begin{align*}
\P_{\reg,S} (\A^\top\A)^\dagger \P_{\reg,S} = & ~ \A_{\setS}^\top ( \A_{\setS} \A_{\setS}^\top + \reg\I)^{-1} \cdot\A_{\setS}(\A^\top\A)^\dagger \A_{\setS}^\top\cdot (\A_{\setS} \A_{\setS}^\top + \reg\I)^{-1} \A_{\setS}.
\end{align*}
Denote $\mathcal{E}$ as the event that $\A_S^\top\A_S \preceq \alpha \A^\top\A$, by assumption we have $\Pr\{\mathcal{E}\} = 1-\delta$. Conditioned on $\mathcal{E}$, we have $\|\A_S(\A^\top\A)^\dagger \A_S^\top\| = \|(\A^\top\A)^{\dagger/2} \A_S^\top\A_S (\A^\top\A)^{\dagger/2}\| \leq \alpha$, which gives
\begin{align*}
& ~ \E[\P_{\reg,S} (\A^\top\A)^\dagger \P_{\reg,S}] \\
= & ~ (1-\delta) \cdot \E[\P_{\reg,S} (\A^\top\A)^\dagger \P_{\reg,S} \mid \mathcal{E}] + \delta \cdot \E[\P_{\reg,S} (\A^\top\A)^\dagger \P_{\reg,S} \mid \neg \mathcal{E}] \\
\preceq & ~ \alpha (1-\delta)
\cdot \E[\A_{\setS}^\top ( \A_{\setS} \A_{\setS}^\top + \reg\I)^{-2} \A_{\setS} \mid \mathcal{E}] + \delta \cdot \E[\A_{\setS}^\top ( \A_{\setS} \A_{\setS}^\top + \reg\I)^{-2} \A_{\setS} \mid \neg \mathcal{E}] \\
= & ~ \alpha \cdot \E[\A_{\setS}^\top ( \A_{\setS} \A_{\setS}^\top + \reg\I)^{-2} \A_{\setS}] + \left(1- \alpha \right)\delta\cdot\E[\A_{\setS}^\top ( \A_{\setS} \A_{\setS}^\top + \reg\I)^{-2} \A_{\setS} \mid \neg \mathcal{E}]\\
\preceq & ~ \frac{\alpha}{\lambda} \cdot \E[\P_{\reg,\setS}] + \frac{(1-\alpha)\delta}{\lambda}\cdot \I.
\end{align*}
Here, we use that $\E[\A_{\setS}^\top ( \A_{\setS} \A_{\setS}^\top + \reg\I)^{-2} \A_{\setS}] \preceq \frac{1}{\lambda} \E[\A_{\setS}^\top ( \A_{\setS} \A_{\setS}^\top + \reg\I)^{-1} \A_{\setS}] = \frac{1}{\lambda} \E[\P_{\lambda,S}]$ in the last step. If $\alpha=1$, then $\A_S^\top\A_S \preceq \A^\top\A$ always holds, which gives $\delta = 0$. Then, $\E[\P_{\reg,S} (\A^\top\A)^\dagger \P_{\reg,S}] \preceq \frac{1}{\lambda}\E[\P_{\lambda,S}]$, and by applying this result to \eqref{eq:nu_bound} we have
\begin{align*}
\nu \leq \frac{1}{c} + \frac{\bar{\lambda}}{c} \cdot \frac{1}{\lambda} \leq \frac{2\bar\lambda}{c\lambda}.
\end{align*}
If $0 \leq \alpha < 1$, then by applying $\E[\P_{\reg,S} (\A^\top\A)^\dagger \P_{\reg,S}] \preceq \frac{\alpha}{\lambda} \cdot \E[\P_{\reg,\setS}] + \frac{(1-\alpha)\delta}{\lambda}\cdot \I$ to \eqref{eq:nu_bound} we have
\begin{align*}
\nu \leq \frac{1}{c} + \frac{\bar\lambda}{c} \cdot \left\|\frac{\alpha}{\lambda} \I + \frac{(1-\alpha) \delta}{\lambda} \bar{\P}_{\lambda}^\dagger \right\| \leq \frac{1}{c} + \frac{\bar\lambda}{c\lambda} \left(\alpha + \delta \|\bar{\P}_{\lambda}^\dagger\| \right) = \frac1c\bigg(1 + \frac{\bar\lambda}{\lambda}\Big(\alpha+ \delta\|\bar\P_\lambda^\dagger\|\Big)\bigg).
\end{align*}
Using the assumption that $\delta \leq \alpha / \|\bar{\P}_{\lambda}^\dagger\|$ we conclude the proof.
\end{proof}

Finally, we conclude this section by showing how Corollary \ref{cor:nu} follows by combining Theorems \ref{l:mu-reg} and \ref{l:nu-reg} with Lemma \ref{l:nu_misc}.

\begin{proof}[Proof of Corollary~\ref{cor:nu}]
Under the assumptions, by Theorem~\ref{l:mu-reg} we have
\begin{align}\label{eq:assumption}
\bar{\P}_{\reg} = \E_{S\sim\Dc}[\P_{\lambda,S}] \succeq \frac{1}{2}\A^\top\A(\A^\top\A+\bar\lambda\I)^{-1}
\end{align}
holds conditioned on an event that only depends on RHT. This gives that $\mu \coloneqq \lambda_{\min}^+(\bar{\P}_{\reg}) \geq \frac{k}{2r\bar{\kappa}_k^2}$. Since we assume that $r> C\log(m/\delta)$, according to Lemma~\ref{l:nu_misc} we have $\A_S^\top\A_S \preceq \frac{c'r \log(n/\delta')}{m}\A^\top\A$ holds for $c' = 4 + \frac{32}{C}$ with probability $1-\delta'$. By applying this result and \eqref{eq:assumption} to Theorem~\ref{l:nu-reg} with choice $\delta' = \frac{r}{m\|\bar{\P}_{\lambda}^\dagger\|} = \frac{r}{m}\mu \geq \frac{k}{2m\bar{\kappa}_k^2}$, we have
\begin{align*}
& ~ \lambda_{\max}\Big(\E\big[(\bar\P_{\reg}^{\dagger/2}\P_{\reg,S}\bar\P_{\reg}^{\dagger/2})^2\big]\Big) \\
\leq & ~ 2 + \frac{2\bar\lambda}{\lambda}\Big(\frac{c'r \log(n/\delta')}{m} + \frac{r}{m}\Big) \leq 2+\frac{2\bar\lambda}{\lambda} \frac{(c'+1)r\log(n/\delta')}{m} \\
\leq & ~ \frac{2\bar\lambda}{\lambda} \left(\frac{k}{m} + \frac{(c'+1)r\log(mn\bar{\kappa}_k)}{m}\right) \leq \frac{2\bar\lambda}{\lambda}\cdot \frac{(c'+2)r \log(mn\bar{\kappa}_k)}{m} \\
\leq & ~ \frac{4\bar\lambda}{\lambda}\cdot \frac{(3+\frac{16}{C})r \log(mn\bar{\kappa}_k)}{m} \leq \frac{\bar\lambda}{\lambda}\cdot \frac{16r \log(mn\bar{\kappa}_k)}{m}
\end{align*}
where the last step follows by taking $C\geq 16$. Since $\lambda_{\max}(\E[(\bar\P_{\reg}^{\dagger/2}\P_{\reg,S}\bar\P_{\reg}^{\dagger/2})^2]) \leq \frac{4\bar\lambda}{\lambda}$ also holds, we finish the proof. By further specifying $\lambda = \frac{k}{m}\bar{\lambda}$, we conclude that
\begin{align*}
\nu(\bar\A,\Uc(m,s),\lambda)\leq \min\left\{\frac{4m}{k}, \frac{16r}{k} \log(mn\bar{\kappa}_k) \right\}.
\end{align*}
\end{proof}

\section{Optimized Computations via Block Memoization}
\label{s:blocks}

The overall computational cost of \alg\ consists of the cost of applying the RHT plus the cost of performing its iterations.
Next, in Section \ref{s:fast-projection}, we discuss the computational cost of computing the regularized projections, which dominate the overall computations in an iteration, but fortunately can be done inexactly. Then, in Section \ref{s:block-memoization}, we analyze our proposed block memoization, which reduces the number of Cholesky computations required for the regularized projections.  Finally, we put everything together in Section~\ref{s:overall_comp}, and summarize the overall computational costs in Theorem~\ref{thm:main}.

\subsection{Computing the Projection Step}\label{s:fast-projection}
The dominant computational cost in each of the iterations is computing the regularized projection step $\w_t$, which can be formulated as standard under-determined least squares with Tikhonov regularization:
\begin{align*}
    \w_t = \argmin_{\w\in\R^n}\Big\{\|\A_S\w-\r_t\|^2+\lambda\|\w\|^2\Big\},\quad\text{where}\quad \r_t = \A_S\x_t-\b_S.
\end{align*}
This step can be computed directly using $O(ns^2)$ arithmetic operations for a block of size $s$, which may be acceptable for small $s$, but becomes prohibitive for large block sizes.
However, since our convergence analysis allows computing $\w_t$ inexactly, we can also use a preconditioned iterative solver such as CG or LSQR. Here, we propose a randomized preconditioning strategy based on sketching, where one constructs a small sketch $\hat\A=\A_S\mPi^\top\in\R^{s\times \tau}$ for a sketching matrix $\mPi\in\R^{\tau\times n}$, and then use this sketch to construct a preconditioner. Given the extensive literature on randomized sketching (e.g., see \cite{drineas2016randnla,martinsson2020randomized,derezinski2024recent}), there are several different preconditioner constructions one can use, such as Blendenpik \cite{avron2010blendenpik} and LSRN \cite{meng2014lsrn}. Of particular relevance here are approaches that exploit the presence of regularization $\lambda$ to improve the quality of the preconditioner. Here, we will describe the Cholesky-based preconditioner of \cite{meier2022randomized}, due to its simplicity and numerical stability:
\begin{algorithmic}[1]
    \State Compute $\hat\A=\A_S\mPi^\top$, where $\mPi\in\R^{\tau\times n}$ is a random sketching matrix;
    \State Compute $\Rb = \mathrm{chol}(\hat\A\hat\A^\top+\lambda\I)$, where $\mathrm{chol}()$ is the Cholesky factorization.
\end{algorithmic}
Armed with this Cholesky preconditioner $\Rb$, we can now compute $\w_t$ as part of the min-length solution to the following system using an iterative method such as LSQR:
\begin{align}
\begin{bmatrix}\w_t\\\v_t\end{bmatrix}
    =\argmin_{\w\in\R^{n},\v\in\R^s}
    \Big\|\Rb^{-\top}\big[\A_S\ \sqrt\lambda\I\big]\begin{bmatrix}\w\\\v\end{bmatrix}-\Rb^{-\top}\r_t\Big\|^2.\label{eq:lsqr}
\end{align}
The quality of the preconditioning is determined primarily by the choice of sketch size $\tau$. In particular, to ensure that the system \eqref{eq:lsqr} has condition number $O(1)$, it suffices to use sketch size $\tau$ proportional to the so-called $\lambda$-effective dimension $d_\lambda(\A_S) = \sum_{i=1}^s \frac{\sigma_i^2(\A_S)}{\sigma_i^2(\A_S)+\lambda}$. Note that $d_\lambda(\A_S)\leq s$ for any $\lambda\geq 0$, and moreover, larger $\lambda$ yields smaller $d_\lambda(\A_S)$, which means that introducing regularization makes it easier to precondition the projection step. We illustrate this in the case when $\mPi$ is the Subsampled Randomized Hadamard Transform (SRHT, \cite{ailon2009fast,tropp2011improved}), although similar guarantees can be obtained, e.g., for sparse sketching matrices \cite{clarkson2013low,chenakkod2023optimal}.
\begin{lemma}\label{l:inner}
    For $\A_S\in\R^{s\times n}$ and $\lambda\geq 0$, if $\mPi=\sqrt{\frac n\tau}\I_T\Q$ where $\Q$ is the RHT and $T$ is a uniformly random set of size $\tau\geq C (d_\lambda(\A_S)+\log(n/\delta))\log(d_\lambda(\A_S)/\delta)$, then with probability $1-\delta$ we have $\kappa(\Rb^{-\top}\big[\A_S\ \sqrt\lambda\I\big]) \leq 2$, and after $O(\log (1/\epsilon))$ iterations of LSQR on \eqref{eq:lsqr}, we get $\tilde\w$ such that $\|\tilde\w-\w_t\|\leq\epsilon\|\x_t - \x^*\|$.
\end{lemma}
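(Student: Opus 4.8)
The plan is to treat this as a standard randomized‑preconditioning statement and split the proof into two essentially independent pieces: (i) that $\Rb$ is a constant‑factor preconditioner, i.e. $\kappa(\bar\A)\le 2$ where $\bar\A := \Rb^{-\top}[\A_S\ \sqrt\lambda\I]$; and (ii) that, given this, $O(\log 1/\epsilon)$ iterations of LSQR produce a $\tilde\w$ with $\|\tilde\w-\w_t\|\le\epsilon\|\x_t-\x^*\|$. For (i), the key observation is that $[\A_S\ \sqrt\lambda\I][\A_S\ \sqrt\lambda\I]^\top = \A_S\A_S^\top+\lambda\I =: \G$ while $\Rb\Rb^\top = \A_S\mPi^\top\mPi\A_S^\top+\lambda\I =: \hat\G$, so the nonzero singular values of $\bar\A$ are the square roots of the eigenvalues of $\Rb^{-\top}\G\Rb^{-1}$, which is similar to $\hat\G^{-1/2}\G\hat\G^{-1/2}$; hence $\kappa(\bar\A)^2 = \kappa(\hat\G^{-1/2}\G\hat\G^{-1/2})$, and it suffices to prove a constant‑factor spectral approximation such as $\tfrac12\G\preceq\hat\G\preceq\tfrac32\G$ (which gives $\kappa(\bar\A)\le\sqrt3<2$).

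This reduces further: since $\hat\G-\G = \A_S(\mPi^\top\mPi-\I)\A_S^\top$, the spectral approximation is equivalent to $\big\|\G^{-1/2}\A_S(\mPi^\top\mPi-\I)\A_S^\top\G^{-1/2}\big\|\le\tfrac12$, i.e. $\|\B^\top\mPi^\top\mPi\B-\B^\top\B\|\le\tfrac12$ for $\B := \A_S^\top\G^{-1/2}\in\R^{n\times s}$. Here $\B^\top\B = \G^{-1/2}\A_S\A_S^\top\G^{-1/2}\preceq\I$ has eigenvalues $\tfrac{\sigma_i^2(\A_S)}{\sigma_i^2(\A_S)+\lambda}$, so $\tr(\B^\top\B) = d_\lambda(\A_S)$; what is needed is therefore the effective‑dimension (``ridge'') version of the SRHT subspace‑embedding bound. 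I would establish it by the usual head/tail decomposition: split $\range(\B)$ into the $O(d_\lambda(\A_S))$ dominant directions (handled by a standard SRHT subspace embedding together with the $\ell_\infty\!\to\!\ell_2$ flattening property of $\H\D$, which is the source of the $\log(n/\delta)$ term) and the tail (handled by a coarse variance / approximate‑matrix‑multiplication estimate), then combine via an intrinsic‑dimension matrix‑Chernoff bound over the relevant span (the source of the $\log(d_\lambda(\A_S)/\delta)$ factor). For a fixed target accuracy this is exactly satisfied by $\tau\gtrsim(d_\lambda(\A_S)+\log(n/\delta))\log(d_\lambda(\A_S)/\delta)$; I would cite the SRHT analyses referenced in the paper (e.g. \cite{tropp2011improved}) rather than reproving this from scratch.

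Given $\kappa(\bar\A)\le2$, part (ii) is routine: LSQR on $\min_\z\|\bar\A\z-\bar\b\|$ with $\bar\b=\Rb^{-\top}\r_t$ is equivalent to CG on the normal equations and, started from $\z_0=0$, reduces the error in the $\bar\A^\top\bar\A$‑norm at rate $\big(\tfrac{\kappa(\bar\A)-1}{\kappa(\bar\A)+1}\big)^j\le(\tfrac13)^j$. Since the iterates stay in $\range([\A_S\ \sqrt\lambda\I]^\top)$, on that subspace the Euclidean solution error equals $\| [\A_S\ \sqrt\lambda\I](\cdot)\|_{\G^{-1}}$, and because $\hat\G\approx\G$ this is within a constant factor of the $\bar\A^\top\bar\A$‑norm of the solution error; hence $\|\tilde\w-\w_t\|\le\|\tilde\z-\z^*\|\lesssim(\tfrac13)^j\|\z^*\|$. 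Bounding $\|\z^*\|\le\tfrac1{\sqrt\lambda}\|\r_t\|+\|\w_t\|\lesssim\tfrac{\|\A_S\|}{\sqrt\lambda}\|\x_t-\x^*\|$ (using $\r_t=\A_S(\x_t-\x^*)$ and $\|\w_t\|\le\|\x_t-\x^*\|$) shows $\|\tilde\w-\w_t\|\le\epsilon\|\x_t-\x^*\|$ after $j=O\big(\log(\|\A_S\|/(\sqrt\lambda\,\epsilon))\big)$ iterations; since $\lambda=\Theta(\tfrac km\bar\lambda)$ and $\|\A_S\|\le\|\A\|$, the extra $\log(\|\A_S\|/\sqrt\lambda)$ is polylogarithmic and absorbed into the $O(\log 1/\epsilon)$ / $\tilde O$ convention. (The $\lambda=0$ case is the classical randomized least‑squares preconditioning result, analogous with $d_0(\A_S)=\rank(\A_S)$ and $\kappa(\A_S)$ in place of $\|\A_S\|/\sqrt\lambda$.)

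\textbf{Main obstacle.} The bulk of the work is part (i) --- showing that the required SRHT sketch size scales with the $\lambda$‑effective dimension $d_\lambda(\A_S)$ rather than with the block size $s$. This is precisely where regularization pays off, and it needs the intrinsic‑dimension refinement of matrix concentration for the Hadamard transform, with careful bookkeeping that $\lambda$ enters the sketch‑size requirement only through $d_\lambda(\A_S)$. Part (ii) is standard but one must be slightly careful in relating the preconditioned energy norm to the exact quantity $\|\tilde\w_t-\w_t\|$ that Theorem~\ref{lem:converge_accelerate} consumes, and in absorbing the mild $\kappa$‑dependence coming from the cold‑start initialization.
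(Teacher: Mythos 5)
Your overall route matches the paper's: the $\kappa\le 2$ bound is obtained by citing standard SRHT ridge-preconditioning guarantees (the paper simply invokes Theorem 3.5 of \cite{meier2022randomized}; your head/tail effective-dimension sketch is a reasonable outline of what lies behind such results), and the error bound then follows from standard LSQR convergence on the well-conditioned system \eqref{eq:lsqr} plus bookkeeping relating the LSQR error to $\|\tilde\w-\w_t\|$.

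The one place where your argument falls short of the statement is the bound on the true solution $\z^*=(\w_t,\v_t)$. Estimating $\|\z^*\|\le\frac1{\sqrt\lambda}\|\r_t\|+\|\w_t\|\lesssim\frac{\|\A_S\|}{\sqrt\lambda}\|\x_t-\x^*\|$ is unnecessarily lossy: it injects a $\|\A_S\|/\sqrt\lambda$ factor, so you only get $O\big(\log(\|\A_S\|/(\sqrt\lambda\,\epsilon))\big)$ iterations rather than the claimed $O(\log(1/\epsilon))$ (your appeal to the $\tilde O$ convention covers the downstream use in Theorem~\ref{thm:main}, but not the lemma as stated), and it degenerates at $\lambda=0$, which the lemma explicitly allows. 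The paper avoids this entirely by exploiting the structure of the regularized projection: writing $\w_t=\A_S^\top(\A_S\A_S^\top+\lambda\I)^{-1}\r_t$, $\v_t=\sqrt\lambda(\A_S\A_S^\top+\lambda\I)^{-1}\r_t$ with $\r_t=\A_S(\x_t-\x^*)$, one has $\|\w_t\|^2\le\|\x_t-\x^*\|^2$ and
\begin{align*}
\|\v_t\|^2=\lambda(\x_t-\x^*)^\top\A_S^\top(\A_S\A_S^\top+\lambda\I)^{-2}\A_S(\x_t-\x^*)\le(\x_t-\x^*)^\top\A_S^\top(\A_S\A_S^\top+\lambda\I)^{-1}\A_S(\x_t-\x^*)\le\|\x_t-\x^*\|^2,
\end{align*}
since $\lambda(\A_S\A_S^\top+\lambda\I)^{-1}\preceq\I$. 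Hence both components of the true solution are bounded by $\|\x_t-\x^*\|$ with no $\lambda$- or $\|\A_S\|$-dependence, and the relative LSQR guarantee $\|(\tilde\w,\tilde\v)-(\w_t,\v_t)\|\le\epsilon\sqrt{\kappa(\M)}\,\|(\w_t,\v_t)\|$ with $\kappa(\M)\le4$ immediately yields $\|\tilde\w-\w_t\|\le 2\sqrt2\,\epsilon\|\x_t-\x^*\|$ after $O(\log 1/\epsilon)$ iterations. Substituting this estimate for your cold-start bound repairs the proof; the rest of your argument (energy-norm decay at rate $1/3$, norm equivalences via $\hat\G\approx\G$ on $\range([\A_S\ \sqrt\lambda\I]^\top)$) is sound and equivalent to the paper's use of the preconditioned relative error bound.
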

\begin{proof}
The bound $\kappa(\Rb^{-\top}\big[\A_S\ \sqrt\lambda\I\big]) \leq 2$ follows from standard analysis of sketching, e.g., see Theorem 3.5 in \cite{meier2022randomized} and the associated discussion. LSQR initialized with zeros after $O(\log 1/\epsilon)$ iterations returns vectors $\tilde\w$ and $\tilde\v$ such that:
\begin{align*}
    \bigg\|\begin{bmatrix}\tilde\w\\\tilde\v\end{bmatrix} - \begin{bmatrix}\w_t\\\v_t\end{bmatrix}\bigg\|\leq 
\epsilon\sqrt{\kappa(\M)}\cdot\bigg\|\begin{bmatrix}\w_t\\\v_t\end{bmatrix}\bigg\|,
    \quad\text{for}\quad 
    \M = \begin{bmatrix}\A_S^\top\\\sqrt\lambda\I\end{bmatrix}
    \Rb^{-1}\Rb^{-\top}\big[\A_S\ \sqrt\lambda\I\big].
\end{align*}
From the condition number bound we have that $\kappa(\M)\leq 4$, so we can now recover the error bound for $\tilde\w$ as follows:
\begin{align*}
\|\tilde\w-\w_t\|\leq \sqrt{\|\tilde\w-\w_t\|^2+\|\tilde\v - \v_t\|^2}\leq 2\epsilon\sqrt{\|\w_t\|^2+\|\v_t\|^2}.
\end{align*}
Since $\w_t = \A_S^\top(\A_S\A_S^\top+\lambda\I)^{-1}\r_t$, $\v_t = \sqrt\lambda(\A_S\A_S^\top+\lambda\I)^{-1}\r_t$, and $\r_t = \A_S (\x_t - \x^*)$,
we can bound $\|\w_t\|^2$ and $\|\v_t\|^2$ separately as
\begin{align*}
\|\w_t\|^2 &=  \|\A_S^\top(\A_S\A_S^\top+\lambda\I)^{-1} \A_S(\x_t - \x^*)\|^2 \leq \|\x_t - \x^*\|^2
\end{align*}
and
\begin{align*}
\|\v_t\|^2 &=  \lambda (\x_t - \x^*)^\top \A_S^\top (\A_S\A_S^\top+\lambda\I)^{-2}\A_S(\x_t - \x^*) \\
&\leq(\x_t - \x^*)^\top \A_S^\top (\A_S\A_S^\top+\lambda\I)^{-1}\A_S(\x_t - \x^*) \leq \|\x_t - \x^*\|^2.
\end{align*}
Thus $\|\tilde\w - \w_t\| \leq 2\sqrt{2}\epsilon \|\x_t - \x^*\|$. Adjusting $\epsilon$ appropriately concludes the proof.
\end{proof}
Constructing the preconditioner $\Rb$ takes $O(T_{\mathrm{sketch}} + \tau s^2 + s^3)$ operations, where $T_{\mathrm{sketch}}$ represents the cost of the matrix product $\A_S\mPi^\top$. For example, when $\mPi$ is the SRHT, as in Lemma \ref{l:inner}, then $T_{\mathrm{sketch}} = O(ns\log n)$.
Thus, setting $\tau = O(s\log s)$, the overall cost of solving the projection step to within $\epsilon$ relative accuracy takes no more than $O(ns\log n + s^3\log s)$ operations for constructing $\Rb$, followed by $O(ns\log(1/\epsilon))$ operations for running LSQR.

We note that more elaborate randomized preconditioning schemes exist for regularized least squares, such as the SVD-based preconditioner of \cite{meier2022randomized}, and the KRR-based preconditioners of \cite{avron2017faster,frangella2023randomized}, which can be computed with $O(T_{\mathrm{sketch}} + \tau^2s)$ operations. These approaches may be preferable when using $\tau\ll s$. However, given that the matrix $\A_S$ is itself random, it may be difficult to find the optimal value of $\tau$ in each step, which is why we recommend the simple choice of $\tau = \tilde O(s)$.

\subsection{Block Memoization}
\label{s:block-memoization}

When the block size $s$ is larger than $O(\sqrt n)$, then the $O(s^3)$ cost of computing the Cholesky factor $\Rb$ dominates the remaining $\tilde O(ns)$ operations required for performing the projection step. This raises the question of whether we can reuse the $\Rb$ computed in one step for any future steps. Naturally, we could do that if we encounter the same block set $S$ again in a subsequent iteration, but when sampling among all ${m\choose s}$ sets, this is very unlikely. To address this, we propose sampling among a small collection of blocks, $\Bc\subseteq {[m]\choose s}$. That way, we only have to compute $|\Bc|$ Cholesky factors, which can then be reused to speed up later iterations of the algorithm. This strategy, which we call \emph{block memoization}, enables using \alg\ effectively with even larger block sizes.

The crucial challenge with \emph{block memoization} is to ensure that the reduced amount of randomness in the block sampling scheme does not adversely affect the convergence rate. This challenge has been encountered by prior works which have considered sampling from a small collection of blocks, including the classical variant of block Kaczmarz \cite{elfving1980block} where the rows of $\A$ are partitioned into $m/s$ blocks of size $s$. However, despite efforts \cite{needell2014paved}, sharp convergence analysis for a partition-based block Kaczmarz has proven elusive. 

We demonstrate that, once again, introducing regularized projections resolves this crucial challenge: We show that Algorithm \ref{alg:main} using a collection $\Bc$ consisting of $O(\frac mk\log n)$ uniformly random blocks of size $s=\tilde O(k)$ achieves nearly the same (up to factor 2) convergence rate as if it was sampling from all ${[m] \choose s}$ blocks. Importantly, this is more blocks than the $m/s$ that would be obtained by simply partitioning the rows, but only by a logarithmic factor. This over-sampling factor appears necessary for fast convergence even in practice.

\begin{theorem}[Block memoization]\label{thm:block_memo}
Consider matrix $\A\in\R^{m\times n}$, 
parameters $\bar\lambda\geq \lambda>0$, and a probability distribution $\Dc$ over subsets of $[m]$, such that the regularized projection matrix $\P_{\reg,\setS} \coloneqq \A_{\setS}^\top(\A_{\setS}\A_{\setS}^\top + \reg\I)^{-1}\A_{\setS}$ satisfies:
\begin{align*}
    \bar\P_{\reg} \coloneqq \E_{S \sim \mathcal{D}}[\P_{\reg,\setS}] \succeq c\A^\top\A(\A^\top\A+\bar\lambda\I)^{-1}
\end{align*}
for some $c \in (0, 1]$. Assume that $\A_S^\top\A_S\preceq\alpha\A^\top\A$ holds with probability $1- \delta'$ for some $\alpha\in[0,1]$ and $\delta' \in [0,\alpha / \|\bar{\P}_{\lambda}^\dagger\|]$. Let $\mathcal{B}=\{S_i\}_{i=1}^B$ be a collection of $B$ independent samples from $\mathcal{D}$. If $B \geq \frac{40}{c}(1 + \alpha\frac{\bar\lambda}{\lambda})\cdot\log(2n/\delta)$ for some $\delta \in (0, 1)$, then with probability $1-B\delta'-\delta$, the collection $\Bc$ satisfies:
\begin{align*}
\frac{1}{B}\sum_{j=1}^B \P_{\reg, S_j} \succeq \frac{1}{2}\bar\P_{\reg}.
\end{align*}
\end{theorem}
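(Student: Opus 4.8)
The plan is to prove this as a matrix concentration statement: the $B$ matrices $\P_{\reg,S_j}$ are i.i.d.\ with mean $\bar\P_{\reg}$, and we want their empirical average not to fall below $\tfrac12\bar\P_{\reg}$ in the PSD order. The natural first move is to pass to the ``whitened'' matrices $\X_j := \bar\P_{\reg}^{\dagger/2}\P_{\reg,S_j}\bar\P_{\reg}^{\dagger/2}$. Since the assumption $\bar\P_{\reg}\succeq c\,\A^\top\A(\A^\top\A+\bar\lambda\I)^{-1}$ forces $\range(\bar\P_{\reg})=\range(\A^\top)$, and every $\P_{\reg,S_j}$ is supported on $\range(\A^\top)$, conjugation by $\bar\P_{\reg}^{\dagger/2}$ is order-preserving on the relevant subspace. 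Writing $\Pi$ for the orthogonal projection onto $\range(\A^\top)$ (of rank $r=\rank(\A)\le n$), the $\X_j$ are i.i.d.\ PSD with $\E[\X_j]=\Pi$, and the target $\tfrac1B\sum_j\P_{\reg,S_j}\succeq\tfrac12\bar\P_{\reg}$ is equivalent to $\lambda_{\min}^+\!\big(\tfrac1B\sum_j\X_j\big)\ge\tfrac12$.

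Next I would set up a matrix Laplace-transform (Chernoff-type) bound for the lower tail of $\lambda_{\min}^+$. The subtlety is that $\lambda_{\max}(\X_j)$ is only bounded a.s.\ by $\|\bar\P_{\reg}^\dagger\| = 1/\mu$, which is far too large (it carries the very condition-number factor we are trying to avoid), so the textbook matrix Chernoff bound is useless here. This is exactly where regularization and the variance bound of Theorem~\ref{l:nu-reg} come in: under the present hypotheses $\lambda_{\max}\!\big(\E[\X_j^2]\big)\le R$ with $R:=\tfrac2c\big(1+\alpha\tfrac{\bar\lambda}{\lambda}\big)$, and one also has $R\ge\lambda_{\max}((\E\X_j)^2)=1$ by operator convexity. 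Using the scalar inequality $e^{-t}\le 1-t+\tfrac{t^2}{2}$ for $t\ge0$, I would get the matrix cumulant bound $\E\,e^{-\theta\X_j}\preceq \Pi-\theta\Pi+\tfrac{\theta^2}{2}\E[\X_j^2]\preceq\big(1-\theta+\tfrac{\theta^2R}{2}\big)\Pi$ on $\range(\A^\top)$ for $\theta>0$, combine the $B$ i.i.d.\ factors via Lieb's concavity / subadditivity of matrix cumulants, and apply Markov to obtain
$$\Pr\!\Big[\lambda_{\min}^+\big(\tfrac1B\textstyle\sum_j\X_j\big)\le\tfrac12\Big]\le r\cdot\Big(e^{\theta/2}\big(1-\theta+\tfrac{\theta^2R}{2}\big)\Big)^{B}.$$
Optimizing over $\theta$ (roughly $\theta\asymp 1/R$) makes the bracket at most $e^{-1/(16R)}$, so the probability is at most $r\,e^{-B/(16R)}$, which is $\le\delta$ once $B\ge\tfrac{40}{c}\big(1+\alpha\tfrac{\bar\lambda}{\lambda}\big)\log(2n/\delta)$; the explicit constant $40$ simply absorbs the slack in the choice of $\theta$ and in bounding $r\le n$.

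Finally I would reconcile the failure probabilities. The second-moment control $\lambda_{\max}(\E[\X_j^2])\le R$ is essentially Theorem~\ref{l:nu-reg}, whose proof splits on the event $\mathcal E_j:=\{\A_{S_j}^\top\A_{S_j}\preceq\alpha\A^\top\A\}$ (holding with probability $1-\delta'$, with the off-event term controlled using $\delta'\le\alpha/\|\bar\P_{\reg}^\dagger\|$); carrying that per-sample conditioning through the i.i.d.\ sum and union-bounding over the $B$ samples is what produces the extra $B\delta'$ loss, giving overall success probability $1-B\delta'-\delta$. I expect the main obstacle to be precisely the one flagged above: there is no usable deterministic bound on $\lambda_{\max}(\X_j)$, so the argument must be driven by the second moment of the summands rather than by an operator-norm bound, which is the whole reason the Tikhonov parameter $\lambda$ is present; the rest is the standard matrix-Laplace machinery plus event bookkeeping.
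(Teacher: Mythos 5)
Your proposal is correct, but it takes a genuinely different route from the paper's proof. The paper also whitens to $\Z_j=\bar\P_{\reg}^{\dagger/2}\P_{\reg,S_j}\bar\P_{\reg}^{\dagger/2}$, but then truncates, setting $\widetilde\Z_j=\Z_j\mathbf{1}_{\mathcal E_j}$ on the good event $\mathcal E_j=\{\A_{S_j}^\top\A_{S_j}\preceq\alpha\A^\top\A\}$, precisely so that the summands admit a deterministic operator-norm bound $R\approx\frac1c(1+\alpha\frac{\bar\lambda}{\lambda})$; it then applies the two-sided matrix Bernstein inequality to the $\widetilde\Z_j$ (with the variance bound imported from Theorem~\ref{l:nu-reg}), pays the $B\delta'$ via a union bound to replace $\widetilde\Z_j$ by $\Z_j$, separately controls the truncation bias $\|\E[\widetilde\Z_j]-\bar\Z\|\le\delta'\|\Z_j\|$, and converts the additive bound $\hat\Z\succeq\bar\Z-\tfrac12\I$ into the multiplicative one $\hat\Z\succeq\tfrac12\bar\Z$ by conjugating with the projection $\bar\Z$. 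You instead run a one-sided matrix Laplace/Chernoff argument for the lower tail on the subspace $\range(\A^\top)$, exploiting that $e^{-t}\le 1-t+\tfrac{t^2}{2}$ for $t\ge0$ needs no a.s.\ bound on $\lambda_{\max}(\X_j)$, so the whole argument is driven by $\lambda_{\max}(\E[\X_j^2])\le\frac2c(1+\alpha\frac{\bar\lambda}{\lambda})$, which is exactly the unconditional bound of Theorem~\ref{l:nu-reg} under the present hypotheses; the constants check out ($B\ge 16R\log(r/\delta)$ suffices, well inside the stated $\frac{40}{c}(1+\alpha\frac{\bar\lambda}{\lambda})\log(2n/\delta)$), and the multiplicative conclusion comes out directly without the projection-conjugation step. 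What each approach buys: the paper's route uses an off-the-shelf Bernstein inequality at the cost of truncation bookkeeping and the $B\delta'$ union-bound loss; your route needs the bespoke Laplace-transform machinery (Lieb/Tropp master bound plus operator monotonicity of $\log$, restricted to the rank-$r$ subspace) but is cleaner and in fact stronger—since your concentration step uses only second moments, no per-sample conditioning is needed, and you would obtain success probability $1-\delta$ rather than $1-B\delta'-\delta$. Your closing explanation that the $B\delta'$ term arises from ``carrying the per-sample conditioning through the sum and union-bounding'' is therefore a small inconsistency with your own argument (it is how the \emph{paper} incurs that loss, not you); claiming the weaker probability is of course harmless for the stated theorem.
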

The proof of Theorem \ref{thm:block_memo} follows along similar lines as the proof of Theorem \ref{l:nu-reg}, since it primarily requires bounding the variance of each term $\P_{\reg, S_j}$, so that we can apply a matrix concentration inequality (details in Supplement \ref{s:block_memo-proof}). 

The following corollary combines Theorem \ref{thm:block_memo} with Theorem \ref{l:mu-reg}, while also incorporating Lemma \ref{l:nu_misc} to account for over-determined systems. 
\begin{corollary}\label{cor:block_memo}
Suppose matrix $\A\in \R^{m \times n}$ with $\rank(\A) = r$ is transformed by RHT. Given $\delta\in(0,r/m \bar{\kappa}_k^2)$ and $C\log(m/\delta) \leq k < r$, let $\bar{\lambda} = \frac{1}{k}\sum_{i>k} \sigma_i^2$ and let $\Dc$ be uniform over size $s$ subsets of $[m]$ where $s\in [Ck \log (m\bar\kappa_k) , r]$. Let $\mathcal{B} = \{S_i\}_{i=1}^B$ be a collection of $B$ independent samples from $\Dc$. Then, for any $0 < \lambda \leq \frac{k}{m} \bar{\lambda}$, $\alpha = \min\{1,\frac{9r}{m }\log(mn\bar{\kappa}_k)\}$, and $B \geq 80(1+\alpha\frac{\bar\lambda}{\lambda} )\cdot\log(2n/\delta)$, with probability $1-\delta$ we have (i) $\frac{1}{B}\sum_{j=1}^B \P_{\reg, S_j} \succeq \frac{1}{2}\bar\P_{\reg}$, and (ii) $\A_{S_i}^\top \A_{S_i} \preceq \alpha \A^\top\A$ for any $i\in [B]$. 

By further choosing $\lambda = \frac{k}{m}\bar\lambda$, the required number of blocks is:
\begin{align*}
B\geq 80\left( \frac{\min\{m,9r \log(mn\bar{\kappa}_k)\}}{k} + 1\right)\cdot\log(2n/\delta). 
\end{align*}
\end{corollary}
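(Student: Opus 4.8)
The plan is to assemble Corollary~\ref{cor:block_memo} from three facts already in hand. Theorem~\ref{l:mu-reg} supplies the positive-semidefinite lower bound $\bar\P_{\reg}\succeq\frac12\A^\top\A(\A^\top\A+\bar\lambda\I)^{-1}$ that serves as the hypothesis of Theorem~\ref{thm:block_memo} with constant $c=\frac12$; Lemma~\ref{l:nu_misc} supplies the per-block operator-norm bound $\A_S^\top\A_S\preceq\alpha\A^\top\A$; and Theorem~\ref{thm:block_memo} then converts these two inputs into the memoization guarantee $\frac1B\sum_j\P_{\reg,S_j}\succeq\frac12\bar\P_{\reg}$. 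Two preliminary observations make the inputs dovetail. First, the push-through identity $\A^\top(\A\A^\top+\bar\lambda\I)^{-1}\A=\A^\top\A(\A^\top\A+\bar\lambda\I)^{-1}$ shows that the conclusion of Theorem~\ref{l:mu-reg} is literally the hypothesis needed in Theorem~\ref{thm:block_memo}, and through its accompanying remark it also yields $\mu:=\lambda_{\min}^+(\bar\P_{\reg})\geq\frac{k}{2r\bar\kappa_k^2}$ together with $\mathrm{null}(\bar\P_{\reg})=\mathrm{null}(\A)$. Second, because $\A_S$ is a row-submatrix of $\A$ one always has $\A_S^\top\A_S\preceq\A^\top\A$, which is what lets the coefficient $\alpha$ be capped at $1$. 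The hypotheses of Theorem~\ref{l:mu-reg} ($C\log(m/\delta)\leq k<r$, $s\geq Ck\log(m\bar\kappa_k)$, $0<\lambda\leq\frac km\bar\lambda$) are exactly those assumed in the corollary.

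The core of the argument is the per-block step, mirroring the analogous passage in the proof of Corollary~\ref{cor:nu}. I would apply Lemma~\ref{l:nu_misc} (legitimate since $s\leq r$) with a per-block failure parameter $\delta'$ to be chosen, obtaining, on an RHT-only event, that each fixed $S\sim\Dc$ satisfies $\A_S^\top\A_S\preceq\frac{(4r+32\log(m/\delta))\log(n/\delta')}{m}\A^\top\A$ with probability $1-\delta'$. The standing assumption $k\geq C\log(m/\delta)$ with $C\geq64$ gives $32\log(m/\delta)\leq k/2<r/2$, so $4r+32\log(m/\delta)<\tfrac92 r$. It then remains to pin down $\delta'$ so that simultaneously: (a) $\log(n/\delta')\leq2\log(mn\bar\kappa_k)$, i.e. $\delta'\geq1/(m^2n\bar\kappa_k^2)$, which makes the coefficient at most $\frac{9r}{m}\log(mn\bar\kappa_k)$ and hence, after capping, at most $\alpha$; (b) $\delta'\leq\alpha/\|\bar\P_{\reg}^\dagger\|=\alpha\mu$, the side condition of Theorem~\ref{thm:block_memo}; and (c) the $B$-fold union bound $B\delta'$ over the blocks of $\Bc$ is a small fraction of $\delta$. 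Requirement (b) is feasible since $\alpha\geq r/m$ (because $9\log(mn\bar\kappa_k)\geq1$), whence $\alpha\mu\geq\frac rm\mu\geq\frac{k}{2m\bar\kappa_k^2}\geq1/(m^2n\bar\kappa_k^2)$; and requirement (c) asks $\delta'\lesssim\delta/B$, which is compatible with (a) precisely because the corollary's hypothesis $\delta<r/(m\bar\kappa_k^2)$ forces $\log(1/\delta)=O(\log(mn\bar\kappa_k))$ while $B=\tilde O(m/k)$. Choosing $\delta'$ in this window and union-bounding over $i\in[B]$ establishes conclusion (ii), $\A_{S_i}^\top\A_{S_i}\preceq\alpha\A^\top\A$ for all $i$, with probability $1-B\delta'$.

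With $c=\frac12$, the value $\alpha=\min\{1,\frac{9r}{m}\log(mn\bar\kappa_k)\}$ just obtained, and the chosen $\delta'$, I would invoke Theorem~\ref{thm:block_memo}: its block-count threshold is exactly $\frac{40}{c}(1+\alpha\tfrac{\bar\lambda}{\lambda})\log(2n/\delta)=80(1+\alpha\tfrac{\bar\lambda}{\lambda})\log(2n/\delta)$, matching the corollary's assumption on $B$, and its conclusion is conclusion (i). Combining (i), (ii), and the two RHT-only events of Theorem~\ref{l:mu-reg} and Lemma~\ref{l:nu_misc} by a union bound — with the failure shares of the RHT events, the $B$ per-block events, and the matrix-concentration step of Theorem~\ref{thm:block_memo} each taken to be a constant fraction of $\delta$ and the constants absorbed into $C$ as elsewhere in the paper — yields (i) and (ii) simultaneously with probability $1-\delta$. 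Finally, for the displayed bound, substituting $\lambda=\frac km\bar\lambda$ gives $\alpha\tfrac{\bar\lambda}{\lambda}=\frac mk\alpha=\min\{\frac mk,\frac{9r}{k}\log(mn\bar\kappa_k)\}$, so $1+\alpha\tfrac{\bar\lambda}{\lambda}=\frac{\min\{m,\,9r\log(mn\bar\kappa_k)\}}{k}+1$, which is the stated threshold for $B$.

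\textbf{Main obstacle.} Conceptually the corollary is a bookkeeping exercise, and the bookkeeping is the delicate part: one must thread the per-block failure parameter $\delta'$ through the narrow window $\big[\poly(1/(mn\bar\kappa_k)),\ \alpha\mu\big]$ so that the $\log(n/\delta')$ factor produced by Lemma~\ref{l:nu_misc} collapses to $O(\log(mn\bar\kappa_k))$ (and in fact to the stated constant $9$), while at the same time the $B$-fold union bound over blocks, the matrix-concentration failure inside Theorem~\ref{thm:block_memo}, and the two RHT-conditioning events all fit inside the budget $\delta$. The two hypotheses that make this possible are $\delta<r/(m\bar\kappa_k^2)$, which bounds $\log(1/\delta)$ by $O(\log(mn\bar\kappa_k))$, and $k\geq C\log(m/\delta)$ with $C\geq64$, which absorbs $\log(m/\delta)$ into $r$; once these are used, the remainder is substitution.
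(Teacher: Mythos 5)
Your proposal follows essentially the same route as the paper's proof: Theorem~\ref{l:mu-reg} supplies both the hypothesis of Theorem~\ref{thm:block_memo} with $c=\tfrac12$ and the bound $\|\bar\P_\lambda^\dagger\|\leq \frac{2r\bar\kappa_k^2}{k}$, Lemma~\ref{l:nu_misc} with a per-block failure parameter $\delta'$ of order $\frac{k}{2m\bar\kappa_k^2}$ gives the value of $\alpha$ (with the same constant bookkeeping, $4r+32\log(m/\delta)\lesssim \tfrac92 r$ and $\log(n/\delta')\leq 2\log(mn\bar\kappa_k)$, yielding the constant $9$), and Theorem~\ref{thm:block_memo} plus the substitution $\lambda=\tfrac km\bar\lambda$ finish the statement; the paper simply fixes $\delta'=\frac{k}{2m\bar\kappa_k^2}$, which sits at the top of your admissible window.

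One sub-claim in your extra bookkeeping is not valid as written: you assert that the hypothesis $\delta<r/(m\bar\kappa_k^2)$ ``forces $\log(1/\delta)=O(\log(mn\bar\kappa_k))$'', but an upper bound on $\delta$ gives a \emph{lower} bound on $\log(1/\delta)$, not an upper bound, so for very small $\delta$ (which is permitted as long as $k\geq C\log(m/\delta)$) your requirement (c), namely $B\delta'\lesssim\delta$, can be incompatible with the lower end of the window needed for (a). Note, however, that this requirement is an addition of yours: the paper's proof does not attempt to fold the $B\delta'$ term of Theorem~\ref{thm:block_memo} into $\delta$ at all, so dropping (c) and simply taking $\delta'=\frac{k}{2m\bar\kappa_k^2}$ reproduces the paper's argument exactly.
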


\subsection{Overall Computational Analysis}\label{s:overall_comp}

In this section, we illustrate how all of the above results can be put together to achieve low computational cost for \alg, while ensuring fast convergence towards the optimum.

We start by combining Theorem \ref{thm:block_memo} with the analysis of $\mu$ and $\nu$ in Theorems \ref{l:mu-reg} and~\ref{l:nu-reg}, as well as the acceleration analysis from Theorem \ref{lem:converge_accelerate}, in order to recover the convergence guarantee for \alg (Algorithm \ref{alg:main}).

\begin{corollary}\label{c:convergence}
Given $\A\in\R^{m \times n}$ with $\rank(\A)=r$ such that $\A\x^*=\b$, let $\delta\in(0,r/m\bar{\kappa}_k^2)$ and $C\log(m/\delta) \leq k < r$.
There are $\lambda,\rho,\eta$ such that if Algorithm~\ref{alg:main} solves the regularized projection step (line \ref{line-w}) via Lemma \ref{l:inner} with $\epsilon\leq\frac{\rho^2}{8\eta}$ and samples $B\geq C\frac rk\log(mn\bar\kappa_k)\log(n/\delta)$ blocks, then conditioned on a $1-\delta$ probability event depending only on the RHT $\Q$ and block set $\Bc$, we have:
\begin{align*}
\E\|\x_t-\x^*\|^2\leq 8\left(1-\frac{k}{24 \tilde{r}}\bar\kappa_k\right)^t \cdot\|\x_0-\x^*\|^2 \quad\text{where}\quad \tilde{r} = \min\Big\{m, 3r \sqrt{\log(mn \bar{\kappa}_k)}\Big\} .
\end{align*}
\end{corollary}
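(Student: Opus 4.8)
The plan is to derive Corollary~\ref{c:convergence} by instantiating the general acceleration bound of Theorem~\ref{lem:converge_accelerate} with the sampling distribution $\Dc$ given by ``draw a block uniformly from the memoized collection $\Bc=\{S_1,\dots,S_B\}$'', applied to the RHT-preprocessed matrix. For this $\Dc$ the expected regularized projection is the empirical average $\widehat\P := \E_{S\sim\Dc}[\P_{\reg,S}] = \tfrac1B\sum_{j=1}^B \P_{\reg,S_j}$, so the task splits into three estimates on one common high-probability event for the pair $(\Q,\Bc)$: (i) $\widehat\P$ has the same null space as $\A$; (ii) a lower bound on $\mu = \lambda_{\min}^+(\widehat\P)$; (iii) an upper bound on $\nu = \lambda_{\max}\big(\E_{S\sim\Dc}[(\widehat\P^{\dagger/2}\P_{\reg,S}\widehat\P^{\dagger/2})^2]\big)$. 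Once these are in hand, the convergence rate follows from $\bar\rho = \sqrt{\mu/\nu}$. Throughout one fixes $\bar\lambda = \tfrac1k\sum_{i>k}\sigma_i^2$, regularizer $\lambda = \tfrac km\bar\lambda$, and block size $s$ in the range $[Ck\log(m\bar\kappa_k),\,r]$ required by Corollary~\ref{cor:block_memo}.

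For (i) and (ii) I would combine Theorem~\ref{l:mu-reg}, which gives $\bar\P_\reg := \E_{S\sim\Uc(m,s)}[\P_{\reg,S}] \succeq \tfrac12\A^\top\A(\A^\top\A+\bar\lambda\I)^{-1}$ on a $1-\delta$ event of the RHT, with Corollary~\ref{cor:block_memo}(i), which upgrades this to $\widehat\P \succeq \tfrac12\bar\P_\reg$ on a further good event for $\Bc$. Here one checks that the hypothesis $B \ge C\tfrac rk\log(mn\bar\kappa_k)\log(n/\delta)$ meets the requirement $B \ge 80(1+\alpha\tfrac{\bar\lambda}{\lambda})\log(2n/\delta)$ of that corollary, using $\alpha\tfrac{\bar\lambda}{\lambda} = \tilde r'/k$ with $\tilde r' := \min\{m,\,9r\log(mn\bar\kappa_k)\}$. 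On the intersection of the two events (which depends only on $\Q$ and $\Bc$ and has probability $\ge 1-\delta$ after rescaling the constant $C$) we get $\widehat\P \succeq \tfrac14\A^\top\A(\A^\top\A+\bar\lambda\I)^{-1}$; since $\mathrm{range}(\widehat\P)\subseteq\mathrm{range}(\A^\top)$, this PSD lower bound forces $\mathrm{null}(\widehat\P) = \mathrm{null}(\A)$, which is (i). Taking the smallest positive eigenvalue and using the exact identity $\bar\lambda/\sigma_{\min}^+(\A)^2 = \tfrac{r-k}{k}\bar\kappa_k^2$ (as in the remark after Theorem~\ref{l:mu-reg}) then gives $\mu \ge \tfrac14\cdot\tfrac{1}{1+\frac{r-k}{k}\bar\kappa_k^2} \ge \tfrac{k}{4r\bar\kappa_k^2}$.

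For (iii) I would apply the second (sharper) bound of Theorem~\ref{l:nu-reg} to the same $\Dc$: its hypotheses are met with $c=\tfrac14$ (from the expectation bound just established) and with $\A_S^\top\A_S \preceq \alpha\A^\top\A$ holding \emph{deterministically} for every $S\in\Bc$ --- that is, with failure probability $\delta'=0$ --- by Corollary~\ref{cor:block_memo}(ii), where $\alpha = \min\{1,\,\tfrac{9r}{m}\log(mn\bar\kappa_k)\}$. This yields $\nu \le \tfrac2c(1+\alpha\tfrac{\bar\lambda}{\lambda}) = 8(1+\alpha\tfrac mk) \le 16\,\tilde r'/k$ (using $\alpha\tfrac mk = \tilde r'/k \ge 1$). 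Combining (ii) and (iii), $\bar\rho = \sqrt{\mu/\nu} \ge \tfrac{k}{8\sqrt{r\tilde r'}\,\bar\kappa_k}$, and a one-line case split (whether $m \le 3r\sqrt{\log(mn\bar\kappa_k)}$ or not, using $r\le m$) shows $r\tilde r' = \min\{rm,\,9r^2\log(mn\bar\kappa_k)\} \le \min\{m^2,\,9r^2\log(mn\bar\kappa_k)\} = \tilde r^2$, hence $\bar\rho \ge \tfrac{k}{8\tilde r\bar\kappa_k}$. Finally I would feed this $\Dc$ into Theorem~\ref{lem:converge_accelerate} with $c=\tfrac12$, $\rho = \tfrac12\bar\rho$, and $\eta = \tfrac1{2\nu}$; its inexactness requirement $\|\tilde\w_t - \w_t\| \le \tfrac{\rho^2}{8\eta}\|\x_t-\x^*\|$ is exactly what the hypothesis $\epsilon \le \tfrac{\rho^2}{8\eta}$ buys via Lemma~\ref{l:inner} (note $\tfrac{\rho^2}{8\eta} = \tfrac{\mu}{16}$, so only $O(\log(r\bar\kappa_k))$ inner LSQR iterations are needed per block). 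The theorem then gives $\E\|\x_t-\x^*\|^2 \le 8(1-\rho/2)^t\|\x_0-\x^*\|^2$ with $\rho/2 = \bar\rho/4 \ge \tfrac{k}{24\tilde r\bar\kappa_k}$ once the absolute constants are tracked carefully (the degenerate regime $m = \Theta(k)$, where the $\sqrt{\log}$ in $\tilde r$ is vacuous, is checked separately), which is the claimed bound.

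The main obstacle is the careful bookkeeping of which randomness each ingredient is over, not any individual inequality. Theorem~\ref{lem:converge_accelerate} is stated for a \emph{fixed} distribution with deterministic $\mu,\nu$, whereas here $\Dc$ --- and hence $\widehat\P$, $\mu$, $\nu$, $\bar\rho$, and the chosen $\rho,\eta$ --- depends on the random RHT $\Q$ and the random collection $\Bc$. One must therefore (a) freeze a single event for $(\Q,\Bc)$, of failure probability $\le\delta$ after adjusting $C$, on which Theorems~\ref{l:mu-reg} and~\ref{l:nu-reg} and both parts of Corollary~\ref{cor:block_memo} hold simultaneously (and, by a union bound over the $B$ memoized blocks, the sketched preconditioners of Lemma~\ref{l:inner} all succeed); (b) observe that on this event the bounds on $\mu$ and $\nu$ are deterministic numbers, so $\rho,\eta$ are well-defined and can legitimately be passed to Theorem~\ref{lem:converge_accelerate}, whose remaining expectation is only over the per-iteration draws from $\Bc$; and (c) take care that it is $\widehat\P$, not $\bar\P_\reg$, whose null space must be certified --- precisely the content of the block-memoization guarantee in Corollary~\ref{cor:block_memo}(i). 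The only genuinely quantitative subtlety is propagating all constants so that the final rate takes the stated form with $\tilde r = \min\{m,\,3r\sqrt{\log(mn\bar\kappa_k)}\}$; the rest is assembly of results already proved above.
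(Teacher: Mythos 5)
Your proposal is correct and follows essentially the same route as the paper: it instantiates Theorem~\ref{lem:converge_accelerate} with the uniform distribution over the memoized collection $\Bc$, gets $\mu\geq\frac{k}{4r\bar\kappa_k^2}$ from Theorem~\ref{l:mu-reg} combined with Corollary~\ref{cor:block_memo}(i), gets the variance bound from Theorem~\ref{l:nu-reg} with $c=\tfrac14$ and the deterministic $\A_{S_i}^\top\A_{S_i}\preceq\alpha\A^\top\A$ of Corollary~\ref{cor:block_memo}(ii), and closes with $\rho=\bar\rho/2$, $\eta=\tfrac1{2\nu}$, exactly as the paper does (your extra care about freezing one event for $(\Q,\Bc)$ and certifying the null space of the empirical average is a point the paper glosses over but is consistent with it). The one quantitative wrinkle is your unified bound $\nu\leq 16\tilde r'/k$, which yields the constant $32$ rather than the stated $24$ in the rate; to recover $24$ you should, as the paper does, take the minimum of the two bounds in Theorem~\ref{l:nu-reg} (the first bound $\tfrac{2\bar\lambda}{c\lambda}=\tfrac{8m}{k}$ for the $m$-branch and the second, $8(1+\tfrac{9r}{k}\log(mn\bar\kappa_k))\leq\tfrac{80r}{k}\log(mn\bar\kappa_k)$, for the other), giving $\nu\leq\min\{\tfrac{8m}{k},\tfrac{80r}{k}\log(mn\bar\kappa_k)\}$ and hence $\bar\rho\geq\tfrac{k}{6\tilde r\bar\kappa_k}$.
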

\begin{proof}
 Choose $\lambda = \frac{1}{m}\sum_{i>k} \sigma_i^2(\A)$ and let $\Uc(\Bc)$ denote the uniform distribution over the block sets. Also, let $\bar\A=\Q\A$ denote the input matrix after RHT preprocessing. Conditioned on the $1-\delta$ probability event defined in Corollary \ref{cor:block_memo}, by applying Theorem~\ref{l:mu-reg} with $\Dc=\Uc(\Bc)$ we obtain the following:
\begin{align*}
\mu\big(\bar\A,\Uc(\Bc),\lambda\big)\geq \frac{k}{4r\bar\kappa_k^2}.
\end{align*}
Moreover, according to Corollary \ref{cor:block_memo}, for any $i \in [B]$, conditioned on the same event we have $\A_{S_i}^\top\A_{S_i} \preceq \min\{1,\frac{9r\log(mn\bar{\kappa}_k)}{m}\} \A^\top\A$ for all $i\in[B]$. Thus, by Theorem \ref{l:nu-reg},
 \begin{align*}
 \nu\big(\bar\A,\Uc(\Bc),\lambda\big)\leq \min\left\{\frac{8m}{k}, \frac{80r}{k} \log(mn\bar{\kappa}_k) \right\}.
 \end{align*}
 Combining the above bounds on $\mu$ and $\nu$, we obtain the convergence rate as
\begin{align*}
\bar\rho\big(\bar\A,\Uc(\Bc),\lambda\big) = \sqrt{\frac{\mu\big(\bar\A,\Uc(\Bc),\lambda\big)}{\nu\big(\bar\A,\Uc(\Bc),\lambda\big)}} \geq \frac{k}{6\tilde r\bar\kappa_k}.
\end{align*}
We conclude the proof via Theorem \ref{lem:converge_accelerate} by choosing $\rho=\bar\rho/2$ and $\eta = \frac1{2\nu}$.
\end{proof}

We are now ready to provide the overall computational analysis of our algorithm. While we allow optimal selection of the algorithmic hyper-parameters in this statement, note that all of our intermediate results show that the algorithm is robust to different choices of parameters $\lambda$, $\rho$, $\eta$, and $B$, and in the following section, we discuss how to efficiently select these parameters during runtime.

\begin{theorem}[Computational analysis]\label{thm:main}
Given $\A\in\R^{m \times n}$ with $\rank(\A)=r$ and $\b \in \R^m$, let $\x^*$ be the minimum-norm solution of  $\A \x = \b$. For $\delta\in(0,1)$ and $C\log(m/\delta) \leq k < r$, \alg\ (Alg.~\ref{alg:main} via Corr.~\ref{c:convergence}) with $s = \lceil Ck \log(m\bar\kappa_k)\rceil$, $\lambda =\frac1m\sum_{i>k}\sigma_i^2(\A)$, $\rho = \frac{k}{24r\bar{\kappa}_k\log^{1/2}(mn\bar{\kappa}_k)}, B=\lceil \frac{Cr}{k}\log(mn\bar{\kappa}_k)\log(2n/\delta) \rceil$, and $\eta=\frac k{160r\log(mn\bar\kappa_k)}$,  $\x_0=\mathbf{0}_n$,
after $t = \tilde O(\frac rk\bar{\kappa}_k \log 1/\epsilon\delta)$ iterations, with probability $1-\delta$ has
\begin{align*}
\|\x_t - \x^*\|\leq \epsilon \|\x^*\|\quad\text{using} \quad\tilde{O}\left(mn+ rk^2 + nr\bar{\kappa}_k \log 1/\epsilon\delta\right) \quad\text{operations.}
\end{align*}
\end{theorem}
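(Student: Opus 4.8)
The plan is to assemble the computational cost by tracking each of the three terms in the target bound $\tilde O(mn + rk^2 + nr\bar\kappa_k\log 1/\epsilon\delta)$ against the corresponding algorithmic phase, feeding in Corollary~\ref{c:convergence} for the iteration count and Lemma~\ref{l:inner} for the per-step projection cost, and verifying that all hyper-parameter choices in the statement are consistent with the hypotheses of those results. First I would record the preprocessing cost: applying the RHT $\Q$ to $\A$ and $\b$ takes $O(mn\log m) = \tilde O(mn)$ operations (Definition~\ref{d:rht}, applied columnwise). This accounts for the $\tilde O(mn)$ term; note we must also check that the stated $s$, $\lambda$, $\rho$, $\eta$, $B$ satisfy the constraints $s\in[Ck\log(m\bar\kappa_k),r]$, $0<\lambda\le\frac km\bar\lambda$ (here $\lambda = \frac1m\sum_{i>k}\sigma_i^2 = \frac km\cdot\frac1k\sum_{i>k}\sigma_i^2 = \frac km\bar\lambda$, so equality holds and we are at the boundary), $\rho\le c\bar\rho$ and $\eta\in[\frac c\nu,\frac1{2\nu}]$ from Corollary~\ref{c:convergence}'s proof (which sets $\rho=\bar\rho/2$, $\eta=\frac1{2\nu}$), and $B$ meeting the block-count requirement of Corollary~\ref{cor:block_memo}/Corollary~\ref{c:convergence} — all of which should follow by direct substitution using $\bar\rho\ge\frac{k}{6\tilde r\bar\kappa_k}$ and $\nu\le\frac{80r}{k}\log(mn\bar\kappa_k)$.

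Next I would handle the one-time block-memoization cost. The algorithm maintains a collection $\Bc$ of $B = \tilde O(\frac rk)$ blocks, each of size $s = \tilde O(k)$, and computes (once per distinct block, upon first encounter) an approximate Cholesky preconditioner $\tilde\Rb[S]$. By the discussion following Lemma~\ref{l:inner}, constructing each preconditioner costs $O(ns\log n + s^3\log s) = \tilde O(nk + k^3)$ operations. Summed over all $B$ blocks this is $\tilde O(B(nk+k^3)) = \tilde O(\frac rk(nk+k^3)) = \tilde O(nr + rk^2)$. Since $r\le m$, the $\tilde O(nr)$ part is absorbed into $\tilde O(mn)$, leaving the $\tilde O(rk^2)$ term. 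This is precisely the "Phase 1" cost of learning the top-$k$ spectrum; the key point, already emphasized in the introduction, is that it is paid only once rather than once per iteration.

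Then I would compute the per-iteration cost of the convergent phase. In each iteration, the dominant work is (a) forming $\A_S\x_t - \b_S$, an $s\times n$ matrix-vector product, costing $O(ns) = \tilde O(nk)$; and (b) solving the regularized projection to relative accuracy $\epsilon_{\mathrm{inner}} \le \frac{\rho^2}{8\eta}$ via $O(\log 1/\epsilon_{\mathrm{inner}})$ LSQR iterations on the preconditioned system~\eqref{eq:lsqr}, each LSQR step costing another $O(ns)=\tilde O(nk)$. Since $\frac{\rho^2}{8\eta}$ is a fixed $1/\poly(m,n,\bar\kappa_k)$ quantity, $\log 1/\epsilon_{\mathrm{inner}} = \tilde O(1)$, so the per-iteration cost is $\tilde O(nk)$ plus the $\tilde O(k^2)$ cost of applying a cached $s\times s$ triangular factor (dominated by $nk$ when $k\le n$). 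Multiplying by the iteration count $t = \tilde O(\frac rk\bar\kappa_k\log 1/\epsilon\delta)$ from Corollary~\ref{c:convergence} (where the $\log 1/\delta$ arises from converting the expectation bound $\E\|\x_t-\x^*\|^2 \le 8(1-\Theta(\frac{k}{r\bar\kappa_k}\log^{-1/2}))^t\|\x^*\|^2$ into a high-probability statement via Markov's inequality), we get $\tilde O(nk)\cdot\tilde O(\frac rk\bar\kappa_k\log 1/\epsilon\delta) = \tilde O(nr\bar\kappa_k\log 1/\epsilon\delta)$, the "Phase 2" term. Adding the three phases gives the claimed total; the accuracy guarantee $\|\x_t-\x^*\|\le\epsilon\|\x^*\|$ follows since $\x_0=\mathbf 0$ implies $\|\x_0-\x^*\|=\|\x^*\|$.

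The main obstacle — and where I would spend the most care — is the bookkeeping around the inexact projection: one must verify that the accuracy $\epsilon_{\mathrm{inner}}$ demanded by Theorem~\ref{lem:converge_accelerate} (namely $\|\tilde\w_t-\w_t\|\le\frac{\rho^2}{8\eta}\|\x_t-\x^*\|$, a \emph{relative} error in the unknown quantity $\|\x_t-\x^*\|$) is exactly the form delivered by Lemma~\ref{l:inner}, so the two interface cleanly without circularity, and that the failure probabilities compose correctly (the RHT event, the block-collection event from Corollary~\ref{cor:block_memo}, the $B$ preconditioner-construction events from Lemma~\ref{l:inner}, and the Markov step) into a single $1-\delta$ bound after rescaling constants. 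A secondary subtlety is confirming that the sketch size $\tau = \tilde O(s)$ needed in Lemma~\ref{l:inner} — which depends on the effective dimension $d_\lambda(\A_S)\le s$ — does not inflate the $s^3$ term beyond $\tilde O(k^3)$; since $\tau = O(s\log s)$ suffices, the Cholesky construction remains $\tilde O(s^3) = \tilde O(k^3)$ as needed.
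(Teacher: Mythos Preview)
Your proposal is correct and follows essentially the same approach as the paper's proof: the same three-phase cost decomposition (RHT preprocessing at $\tilde O(mn)$, $B$ Cholesky constructions at $\tilde O(nr+rk^2)$, and $t$ iterations at $\tilde O(nk)$ each), with the iteration count from Corollary~\ref{c:convergence} converted to high probability via Markov's inequality. If anything, you are more careful than the paper about the bookkeeping (parameter-constraint verification, failure-probability composition across RHT, block-collection, and preconditioner events), which the paper's proof leaves implicit.
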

\begin{proof}
Corollary \ref{c:convergence} implies that it takes $t=\tilde{O}(\frac rk\bar{\kappa}_k\sqrt{\log(mn\bar\kappa_k)} \log 1/\epsilon)$ iterations to converge $\epsilon$-close in expectation. We convert this to a guarantee that holds with $1-\delta$ probability by replacing $\epsilon$ with $\epsilon\delta$ and then applying Markov's inequality.

It remains to bound the costs associated with running the algorithm. 
\begin{enumerate}
    \item First, applying the RHT takes $O(mn\log m)$ operations. \item  Then, computing all of the Cholesky factors takes $O(B(ns\log n + s^3\log s)) = \tilde{O}(nr+rk^2)$.
    \item Finally, each iteration of the algorithm requires solving \eqref{eq:lsqr} with LSQR so that $\|\tilde\w_t-\w_t\|\leq \epsilon'\|\x_t-\x^*\|$ for $\epsilon'\leq \frac{k}{Cr\bar\kappa_k^2}$, which for a given block $S$ takes $O(ns\log(1/\epsilon'))$ operations.
    Thus, the cost of each iteration is $\tilde{O}(nk)$. Multiplying by $t$, we get $\tilde{O}(nr\bar{\kappa}_k\log1/\epsilon\delta)$ operations.
\end{enumerate}
Adding all of these costs together, we recover the claim.    
\end{proof}

\section{Improved Algorithm for Positive Semidefinite Systems}
\label{s:practical}
In this section, we propose a specialized implementation of \alg\ as a coordinate descent-type solver (\algpd, Algorithm \ref{alg:bcd}), which is optimized for square positive semidefinite linear systems. This algorithm not only gets an improved convergence rate compared to \alg\ for PSD matrices, but also admits a simplified block memoization scheme that avoids an inner LSQR solver. Along the way, we describe a novel adaptive scheme for tuning the acceleration parameters (relevant for both \alg\ and \algpd), as well as a fast implementation of the randomized Hadamard transform for symmetric matrices.
\subsection{Coordinate Descent}
 When the matrix $\A$ is PSD, then \alg\ admits a specialized formulation as a block coordinate descent method, similarly as can be done for the classical randomized Kaczmarz algorithm, see e.g. \cite{HefnyRows15,petra2015randomized}. By applying our \alg result to $\A^{1/2}$ we have the following convergence guarantee.
\begin{theorem}\label{t:cd++}
Suppose a PSD matrix $\A\in\mathcal{S}_n^{+}$ and $\b\in\R^n$ are transformed by RHT so that $\bar{\A} = \Q\A\Q^\top$ and $\bar\b=\Q\b$, and let $\x^*$ be the minimum-norm solution to $\A\x=\b$. Given $\delta \in (0,1)$ and $C\log(n/\delta) \leq k < \rank(\A)$, let $\Bc$ consist of $B$ uniformly random sets from ${[n]\choose s}$ and $\Uc(\Bc)$ be the uniform distribution over $\Bc$. If we run the following update (\algpd, see lines 10-13 of Algorithm~\ref{alg:bcd}):
\begin{align}
\begin{cases}
\w_t\leftarrow \I_{S}^\top(\bar\A_{S,S}+\lambda\I)^{-1}
(\bar\A_S\x_t-\bar\b_S)\text{ for } S\sim \Uc(\Bc)\\
\m_{t+1} \leftarrow \frac{1-\rho}{1+\rho}\big(\m_t - \w_t\big) \\
\x_{t+1} \leftarrow \x_t - \w_t + \eta \m_{t+1}
\end{cases}\label{eq:cd-update}
\end{align}
initialized with $\x_0\in\R^n$, block size $s = \lceil Ck \log(n\bar\kappa_k(\A^{1/2}))\rceil$, $\lambda =\frac1n\sum_{i>k}\lambda_i(\A)$, and the number of blocks $B=\lceil C\frac nk\log(n/\delta)\rceil$, then we have
\begin{align*}
\E\left\|\Q^\top\x_t - \x^*\right\|_{\A}^2 \leq 8\Big(1 - \frac{k}{24n\bar\kappa_k(\A^{1/2})}\Big)^t \cdot \|\Q^\top\x_0-\x^*\|_{\A}^2.
\end{align*}
\end{theorem}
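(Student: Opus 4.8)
The plan is to reduce Theorem~\ref{t:cd++} to the \alg\ convergence guarantee of Corollary~\ref{c:convergence} by showing that the \algpd\ update \eqref{eq:cd-update} run on $(\bar\A,\bar\b)$ is, after a linear change of variables, exactly an \alg\ run on a Kaczmarz system built from a symmetric square root of $\bar\A$. First note that $\Q$ is orthogonal, $\Q^\top\Q=\Q\Q^\top=\I$ (the Hadamard matrix is symmetric with $\H^2=m\I$), so $\bar\A=\Q\A\Q^\top$ is PSD with $\bar\A^{1/2}=\Q\A^{1/2}\Q^\top$ and $\bar\A=\bar\A^{1/2}\bar\A^{1/2}$; equivalently $\bar\A^{1/2}$ is the matrix $\A^{1/2}\Q^\top$ after RHT-preprocessing by $\Q$. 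This square root enters only the analysis — \algpd\ itself touches only $\bar\A_{S,S}$, $\bar\A_S$, $\bar\b_S$.

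The first step is the change of variables: set $\z_t:=\bar\A^{1/2}\x_t$ and $\m_t':=\bar\A^{1/2}\m_t$, where $\x_t,\m_t$ are the \algpd\ iterates with $\m_0=\mathbf 0$, and prove by induction on $t$ that $(\z_t,\m_t')$ are exactly the iterates produced by lines~\ref{line:blocks}--\ref{line:update} of Algorithm~\ref{alg:main}, with the same $\lambda,\rho,\eta$, applied to the consistent system $\bar\A^{1/2}\z=\bar\b$. The identities $(\bar\A^{1/2})_S(\bar\A^{1/2})_S^\top=\I_S\bar\A\I_S^\top=\bar\A_{S,S}$, $(\bar\A^{1/2})_S^\top=\bar\A^{1/2}\I_S^\top$, and $(\bar\A^{1/2})_S\z_t=\I_S\bar\A\x_t=\bar\A_S\x_t$ show that the regularized projection step of \alg\ on $\bar\A^{1/2}\z=\bar\b$ equals $\bar\A^{1/2}\w_t$ with $\w_t=\I_S^\top(\bar\A_{S,S}+\lambda\I)^{-1}(\bar\A_S\x_t-\bar\b_S)$, i.e. exactly the \algpd\ step; the momentum recursion and the update $\z_{t+1}=\z_t-\bar\A^{1/2}\w_t+\eta\m_{t+1}'$ then propagate the factor $\bar\A^{1/2}$ by linearity, the base case being $\m_0'=\mathbf 0$ and $\z_0=\bar\A^{1/2}\x_0$. (This is the \algpd\ counterpart of Lemma~\ref{lem:equivalence}.)

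The second step is to apply Corollary~\ref{c:convergence} to this Kaczmarz system, i.e. to the matrix $\A^{1/2}\Q^\top$ preprocessed by $\Q$, with the regularized projection solved exactly (an $s\times s$ solve, which trivially meets the accuracy requirement of Corollary~\ref{c:convergence}). That matrix has rank $r:=\rank(\A)$ and singular values $\sqrt{\lambda_i(\A)}$, so its $k$-th normalized Demmel condition number is $\bar\kappa_k(\A^{1/2})$, and the theorem's parameters $s=\lceil Ck\log(n\bar\kappa_k(\A^{1/2}))\rceil$, $\lambda=\tfrac1n\sum_{i>k}\lambda_i(\A)$, $B=\lceil C\tfrac nk\log(n/\delta)\rceil$ satisfy the hypotheses of Corollary~\ref{c:convergence} up to absolute constants (with $m=n$ and $r\le n$). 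One point needing care: $\A^{1/2}\Q^\top$ depends on $\Q$, but the ``RHT property'' underlying Corollary~\ref{c:convergence} (Lemma~\ref{l:dpp-reduction}, Theorem~\ref{l:mu-reg}, Lemma~\ref{l:nu_misc}) only controls how $\Q$ acts on matrices built from $(\A^{1/2}\Q^\top)(\A^{1/2}\Q^\top)^\top=\A$ and on the left singular vectors of $\A^{1/2}\Q^\top$, which are the eigenvectors of $\A$; it is therefore a property of the pair $(\Q,\A)$ with $\Q$ independent of $\A$, and there is no genuine circularity. Also $\z_0=\bar\A^{1/2}\x_0\in\range(\bar\A^{1/2})$ automatically lies in the row space of $\bar\A^{1/2}$, so there is no constraint on $\x_0$. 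Corollary~\ref{c:convergence} then gives $\E\|\z_t-\z^*\|^2\le 8(1-\rho/2)^t\|\z_0-\z^*\|^2$ with $1-\rho/2\le 1-\tfrac{k}{24n\bar\kappa_k(\A^{1/2})}$ (using $\tilde r\le n$), where $\z^*$ is the minimum-norm solution of $\bar\A^{1/2}\z=\bar\b$.

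The last step is to translate the bound back. Since $\A\x^*=\b$, the vector $\bar\A^{1/2}\Q\x^*$ solves $\bar\A^{1/2}\z=\bar\b$ (because $\bar\A^{1/2}(\bar\A^{1/2}\Q\x^*)=\bar\A\Q\x^*=\Q\A\x^*=\bar\b$) and lies in $\range(\bar\A^{1/2})$, hence is the minimum-norm solution, so $\z^*=\bar\A^{1/2}\Q\x^*$. For any $\v$, $\|\bar\A^{1/2}\v\|^2=\v^\top\bar\A\v=\v^\top\Q\A\Q^\top\v=\|\Q^\top\v\|_\A^2$, so $\|\z_t-\z^*\|^2=\|\bar\A^{1/2}(\x_t-\Q\x^*)\|^2=\|\Q^\top\x_t-\x^*\|_\A^2$ (using $\Q^\top\Q\x^*=\x^*$), and likewise $\|\z_0-\z^*\|^2=\|\Q^\top\x_0-\x^*\|_\A^2$; substituting gives the stated bound. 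The main obstacle I anticipate is twofold: making the momentum term line up exactly in the Step~1 induction (a careful but routine computation, patterned on Lemma~\ref{lem:equivalence}), and justifying in Step~2 that the \alg\ analysis applies verbatim to the $\Q$-dependent matrix $\A^{1/2}\Q^\top$; the remaining parts are routine bookkeeping of parameters and norms.
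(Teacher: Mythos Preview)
Your proposal is correct and follows essentially the same reduction as the paper: linearly transform the \algpd\ iterates into \alg\ iterates on a square-root system, then invoke Corollary~\ref{c:convergence}. The one noteworthy difference is your choice of change of variables. You set $\z_t=\bar\A^{1/2}\x_t$, which makes the underlying Kaczmarz matrix $\A^{1/2}\Q^\top$ and forces you to argue that its $\Q$-dependence is harmless for the RHT analysis (your argument there is valid: the RHT properties in Theorem~\ref{l:mu-reg} and Lemma~\ref{l:nu_misc} depend only on $\M\M^\top=\A$ and on the left singular vectors, which are the eigenvectors of $\A$). The paper instead sets $\z_t=\bar\mPhi^\top\x_t$ with $\bar\mPhi=\Q\A^{1/2}$, so the underlying matrix before RHT is simply $\A^{1/2}$, independent of $\Q$, and the circularity issue never arises. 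Both routes yield identical identities $(\bar\mPhi_S\bar\mPhi_S^\top=\bar\A_{S,S}$, etc.) and identical final bounds; the paper's choice just saves you that one paragraph of justification.
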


\begin{proof}
We start by describing the reduction from \algpd\ to \alg. For the sake of notation, suppose that $\x_t = \mathcal A_t(\A,\b,\x_0)$ denotes $t$ updates from lines \ref{line:blocks}-\ref{line:update} of Algorithm \ref{alg:main} for solving a general linear system $\A\x=\b$ (with exact projections).

Given matrix $\A\in\mathcal{S}_n^{+}$, let $\mPhi \in \R^{n \times n}$ be such that $\A = \mPhi \mPhi^\top$. Since we denote $\bar\A = \Q\A\Q^\top$, we can first rewrite the linear system as $\bar{\A}\bar{\x} = \Q\b$ with $\x = \Q^\top\bar{\x}$, which is further equivalent to $\Q\mPhi\z = \Q\b$ with $\z = \mPhi^\top\Q^\top\bar{\x}$. Thus, the preprocessing step of \algpd on $\A\x=\b$ is equivalent to the preprocessing step of \alg on $\mPhi\z=\b$.
Denoting $\bar{\mPhi} = \Q\mPhi$ and $\bar{\b} = \Q\b$, let $\bar\mPhi^\top\x_t = \mathcal A_t(\bar{\mPhi},\bar{\b},\bar\mPhi^\top\x_0)$ be the implicit \alg\ iterates, denoted as $\z_t = \bar\mPhi^\top\x_t$. The projection step for these iterates is
\begin{align}\label{eq:implicit}
\bar{\mPhi}_S^\top(\bar{\mPhi}_S\bar{\mPhi}_S^\top+\reg\I)^{-1}(\bar{\mPhi}_S\z_t-\bar{\b}_S)=\bar\mPhi^\top\I_{S}^\top(\bar{\A}_{S,S}+\reg\I)^{-1}(\bar{\A}_S\x_t-\bar{\b}_S)
\end{align}
which leads to the coordinate descent update \eqref{eq:cd-update}. Notice that the iterates $\x_t$ of \algpd\ are actually solving the system $\bar\A\bar\x=\bar\b$, and thus converging to its solution, $\bar\x^*$, and not to $\x^*$. According to Corollary~\ref{c:convergence}, letting $\z^*$ denote the solution of the implicit system $\bar\mPhi\z=\bar\b$, we have the following:
\begin{align*}
\E\left\|\Q^\top\x_t - \x^*\right\|_{\A}^2 
&=  \E\left\|\bar\mPhi^\top(\x_t-\bar\x^*)\right\|^2=
\E\left\|\z_t - \z^*\right\|^2\\
&\leq 8\Big(1 - \frac{k}{24n\bar\kappa_k(\bar\mPhi)}\Big)^t \cdot \|\z_0-\z^*\|^2 
\\
&= 8\Big(1 - \frac{k}{24n\bar\kappa_k(\bar\mPhi)}\Big)^t \cdot \|\Q^\top\x_0-\x^*\|_{\A}^2.
\end{align*}
Choosing $\mPhi=\A^{1/2}$, we recover the claim. Note that the application of $\Q^\top$ (line \ref{l:stopping} in Algorithm \ref{alg:bcd}) transforms the iterate $\x_t$ back to solving the system $\A\x=\b$.
\end{proof}
Thus, the convergence rate of \algpd\ is determined by the convergence of the implicit \alg\ algorithm. 
Crucially, this convergence is now governed by \begin{align*}
\bar\kappa_k(\A^{1/2})\leq\sqrt{\bar\kappa_k(\A)},
\end{align*}
which improves over simply running \alg\ on $\A$ by at least the square root of $\bar\kappa_k(\A)$. Finally,  similar to Corollary~\ref{c:convergence}, the convergence rate in Theorem~\ref{t:cd++} can be improved when the matrix $\A$ is rank deficient, by replacing $n$ with $2r\sqrt{\log(n\bar\kappa_k)}$.

\subsection{Simplified Block Memoization}\label{s:block_memoization}
One of the main costs in the above block coordinate descent update is applying the inverse matrix $(\A_{S,S}+\lambda\I)^{-1}$ to a vector. Similarly to what we did in Section~\ref{s:blocks}, we can amortize this cost by sampling a collection of blocks, and pre-computing the Cholesky factors of $\A_{S,S}+\lambda\I$, so that all subsequent applications of $(\A_{S,S}+\lambda\I)^{-1}$ can be done using $O(s^2)$ operations, where $s$ is the block size.  Note that, there is no need for sketching or using LSQR in the \algpd\ version of this scheme, because we can compute the Cholesky factor exactly in $O(s^3)$ time, unlike in \alg, where this would take $O(ns^2)$ time (Section \ref{s:fast-projection}).

The key question is how to choose the number of blocks to pre-compute, in order to ensure effective convergence of the method. Our theory suggests that $O(\frac nk\log n)$ blocks is enough with high probability when block size is $s=\tilde O(k)$, but the constant/logarithmic factors matter significantly, since if we choose too few blocks up front, we may not end up with a convergent method, whereas too many blocks leads to significant unnecessary computational overhead. To address this, we propose an online block selection scheme, where the algorithm adds new blocks during the course of its convergence, but gradually shifts towards reusing the previously collected blocks.

Specifically, we initialize our block list $\Bc$ as empty, and then in iteration $t$:
\begin{enumerate}
    \item Sample a Bernoulli variable $b$ with success probability $\min\{\,1,\ \frac1t\cdot \frac ns\log n\,\}$. 
    \item If $b=1$, then sample new random block $S$ from ${[n]\choose s}$, store the Cholesky factor $\Rb[S]=\mathrm{chol}(\A_{S,S}+\lambda\I)\in\R^{s\times s}$, and add $S$ to the list $\Bc$.
    \item If $b=0$, then sample block $S$ from $\Bc$, and reuse the saved Cholesky $\Rb[S]$.
\end{enumerate} 
 This block selection strategy (which we also adapt for \alg\ in the supplement) implies that the first $B=\frac ns\log n$ blocks will be sampled uniformly at random from all size $s$ index sets and added to the block list $\Bc$. After that, in $T$ iterations the scheme will collect on average an additional $\sum_{t=B}^T\frac Bt\approx  B\log(\frac TB)$ blocks. Since the factor $\log(\frac TB)$ grows as the iterations progress, this implies that we are guaranteed to reach the number of blocks that is needed by our theory to imply convergence. However, since the factor grows slowly, we will not have to do too much unnecessary Cholesky factorizations before converging to a desired accuracy.
Note that each Cholesky factor takes $O(s^2)$ memory, so if we store $O(\frac ns\log n)$ factors throughout the convergence, then this uses only $O(ns\log n)$ additional memory.

\subsection{Symmetric Randomized Hadamard Transform}
To ensure that uniformly sampled blocks yield a fast convergence rate for our algorithm, we must preprocess the linear system.
Specifically, in the PSD case, where we assume that $\A=\mPhi\mPhi^\top$, we need to apply the randomized Hadamard transform $\Q$ to $\mPhi$ and $\b$, so that our theoretical analysis can be applied to the implicit block Kaczmarz algorithm based on \eqref{eq:implicit}. In the context of \algpd, with access to $\A$ and not $\mPhi$, this corresponds to transforming the original system into:
\begin{align*}
    \Q\A\Q^\top\bar\x=\Q\b,\qquad \x=\Q^\top\bar\x,
\end{align*}
where applying $\Q$ on both sides of $\A$ is crucial to maintaining the PSD structure of the system. 
Recall that the transform can be defined as $\Q = \H\D$, where $\D = \frac1{\sqrt n}\diag(d_1,...,d_n)$ and $d_i$ are independent random $\pm 1$ signs (Rademacher variables). 
So, the dominant cost of this preprocessing step is applying the Hadamard transform $\H$ on both sides of the matrix $\D\A\D$. The classical recursive algorithm for doing this, which we refer to as the Fast Hadamard Transform (FHT, see Appendix \ref{s:symfht}), takes $mn\log n$ operations to compute $\FHT(\M)=\H\M$ for an $n\times m$ matrix $\M$. Thus, the cost of computing $\Q\A\Q^\top=\FHT(\FHT(\D\A\D)^\top)$ is roughly $2n^2\log n$ operations. 
This suggests that, in order to maintain the positive definite structure for \algpd, we must double the preprocessing cost compared to \alg.

We show that this trade-off can be entirely avoided: By exploiting the symmetric structure of $\A$, we perform the two FHTs simultaneously at the cost of one FHT applied to a general matrix. We achieve this using a specialized recursive algorithm, which we call SymFHT (Algorithm \ref{alg:symfht}). Here, for simplicity we write the recursion assuming that the input matrix is at least $2\times 2$. Naturally, the base case of the recursion is a $1\times 1$ input matrix, in which case we let $\SymFHT(\A)=\A$.

\begin{algorithm}[!ht]
\caption{Symmetric Fast Hadamard Transform (SymFHT)}
\label{alg:symfht}
\begin{algorithmic}[1]
\Function{SymFHT}{$\A$} \Comment{Input: Symmetric matrix $\A=\begin{bmatrix}
    \A_{11} & \A_{12} \\
    \A_{12}^\top & \A_{22}
\end{bmatrix}$.}
\State Compute $\B_{11} \leftarrow\SymFHT(\A_{11})$\Comment{Recursive call.}
\State Compute $\B_{22} \leftarrow\SymFHT(\A_{22})$\Comment{Recursive call.}
\State Compute $\B_{12} \leftarrow \FHT(\FHT(\A_{12}^\top)^\top)$
\State Compute 
$\begin{bmatrix}
    \C_{11} & \C_{12}\\
    \C_{21} & \C_{22}
\end{bmatrix} \leftarrow 
\begin{bmatrix}
    \B_{11}+\B_{12}^\top & \B_{11} - \B_{12}\\
    \B_{12}+\B_{22} & \B_{12}^\top-\B_{22}
\end{bmatrix}$\\
\State \textbf{return} $\begin{bmatrix}
    \C_{11}+\C_{21} & \C_{12}+\C_{22} \\
    \C_{12}^\top+\C_{22}^\top & \C_{12}-\C_{22}
\end{bmatrix}$\Comment{Computes $\H\A\H$.}
\EndFunction
\end{algorithmic}
\end{algorithm}

Note that, given an $n\times n$ matrix $\A$ broken down into four $n/2\times n/2$ blocks, the function SymFHT performs two recursive calls corresponding to the two diagonal blocks $\A_{11}$ and $\A_{22}$, since both of these blocks are symmetric. The two off-diagonal blocks $\A_{12}$ and $\A_{12}^\top$ are not symmetric, so we must revert back to applying the classical FHT twice. But crucially, since the off-diagonal blocks are identical up to a transpose, we only have to transform one of them, which gives our recursion its computational gain, as shown in the following result. See Appendix~\ref{s:symfht} for proof.
\begin{theorem}\label{t:symFHT}
    Given an $n\times n$ symmetric matrix $\A$, where $n$ is a power of $2$, Algorithm~\ref{alg:symfht} returns $\H\A\H$ after at most $n^2(2.5+\log n)$ arithmetic operations.
\end{theorem}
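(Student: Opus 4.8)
The plan is to prove Theorem~\ref{t:symFHT} in two stages: first, \emph{correctness} — that $\SymFHT(\A)=\H_n\A\H_n$ for every symmetric $\A$ whose size $n$ is a power of $2$, which I would do by induction on $n$ — and second, the \emph{arithmetic bound}, obtained by unrolling the recursion and summing a geometric series. Throughout, write $\H_N$ for the $N\times N$ Hadamard matrix, and recall the stated fact that $\FHT(\M)=\H_N\M$ for an $N\times m$ matrix costs $mN\log N$ operations.

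For correctness, the base case $n=1$ is trivial since $\SymFHT(\A)=\A=\H_1\A\H_1$. For the inductive step I would use the block factorization $\H_n=\begin{bmatrix}\H_{n/2}&\H_{n/2}\\ \H_{n/2}&-\H_{n/2}\end{bmatrix}$. Multiplying out $\H_n\A\H_n$ with $\A$ in the $2\times2$ block form of Algorithm~\ref{alg:symfht} shows its four $\tfrac n2\times\tfrac n2$ blocks are exactly the sign-combinations $\H_{n/2}\A_{11}\H_{n/2}\pm\H_{n/2}\A_{12}\H_{n/2}\pm\H_{n/2}\A_{12}^\top\H_{n/2}+\H_{n/2}\A_{22}\H_{n/2}$, with patterns $(+,+)$, $(-,+)$ in the top row and $(+,-)$, $(-,-)$ in the bottom row. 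The inductive hypothesis applies to lines~2--3 because the diagonal blocks $\A_{11},\A_{22}$ of a symmetric matrix are themselves symmetric, giving $\B_{11}=\H_{n/2}\A_{11}\H_{n/2}$ and $\B_{22}=\H_{n/2}\A_{22}\H_{n/2}$; and since $\FHT(\M)=\H_{n/2}\M$, the double transpose in line~4 yields $\B_{12}=\FHT(\FHT(\A_{12}^\top)^\top)=\H_{n/2}\A_{12}\H_{n/2}$, whence $\B_{12}^\top=\H_{n/2}\A_{12}^\top\H_{n/2}$ by symmetry of $\H_{n/2}$. It then remains to verify that the two successive block ``butterfly'' stages in lines~5--6 reproduce these four target blocks: the $(1,1)$ output $\C_{11}+\C_{21}$, the $(1,2)$ output $\C_{12}+\C_{22}$, and the $(2,2)$ output $\C_{12}-\C_{22}$ match by direct substitution. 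I expect this verification to be the main obstacle — it is routine but bookkeeping-heavy — and the one genuinely subtle point is the lower-left output block, produced as $\C_{12}^\top+\C_{22}^\top$: this equals the target $\B_{11}+\B_{12}-\B_{12}^\top-\B_{22}$ only after invoking that $\B_{11},\B_{22}$ are symmetric (equivalently, that $\H_n\A\H_n$ is symmetric, so its lower-left block is the transpose of its upper-right block). Keeping the signs and transposes aligned with the algorithm's exact expressions is where an error would most easily slip in.

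For the operation count, let $T(n)$ be the number of arithmetic operations used by $\SymFHT$ on an $n\times n$ input, with $T(1)=0$. Each call makes two recursive calls on $\tfrac n2\times\tfrac n2$ matrices; invokes $\FHT$ twice on $\tfrac n2\times\tfrac n2$ matrices in line~4, costing $2\cdot(\tfrac n2)^2\log(\tfrac n2)=\tfrac12 n^2\log(n/2)$ by the stated bound; and performs a bounded number of additions and subtractions of $\tfrac n2\times\tfrac n2$ matrices in lines~5--6 (four in line~5, at most four in line~6, with the transpose being free data movement), costing $O(n^2)$. This gives the recurrence $T(n)\le 2T(n/2)+\tfrac12 n^2\log(n/2)+c_0 n^2$ for an explicit small constant $c_0$, with $T(1)=0$. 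Substituting $n=2^L$ and dividing by $4^L$ turns this into an elementary first-order linear recurrence; equivalently, one unrolls directly and sums over the $\log n$ levels. The $\FHT$ terms sum to $n^2\sum_{j=1}^{\log n}2^{-j}(\log n-j)=n^2(\log n-2)+2n$ — the ``$-1$'' in $\log(n/2)$ is precisely what produces the clean coefficient $1$ on the leading $n^2\log n$ term — while the line~5--6 terms form a geometric series contributing $O(n^2)$. Adding these and tracking the constants gives $T(n)\le n^2(2.5+\log n)$, as claimed. This part is routine geometric-series arithmetic once the recurrence is pinned down.
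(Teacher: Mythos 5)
Your proposal is correct and takes essentially the same route as the paper: verify the block identity $\H_n\A\H_n=\begin{bmatrix}\C_{11}+\C_{21} & \C_{12}+\C_{22}\\ \C_{12}^\top+\C_{22}^\top & \C_{12}-\C_{22}\end{bmatrix}$ (the paper does this by direct expansion rather than a formal induction, and, as you note, the lower-left block is where symmetry of $\A_{11},\A_{22}$ and of $\H$ is used), then bound the cost via the recurrence $\mathcal{T}_{\SymFHT}(n)\le 2\,\mathcal{T}_{\SymFHT}(n/2)+2(n/2)^2\log(n/2)+\tfrac{7}{4}n^2$ and a geometric series. The only cosmetic difference is that the paper counts the block additions exactly ($7n^2/4$ per call, since the lower-left output is a free transpose) instead of a generic constant $c_0$, which is what makes the constant $2.5$ come out directly; your looser count still fits under the stated bound, and your shorthand fixed ``$+$'' sign on the $\A_{22}$ term in the generic block description is an inconsequential slip since the one block you work out explicitly carries the correct $-\B_{22}$.
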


\subsection{Error Estimation and Adaptive Tuning}
The remaining challenge with making our algorithms practical is effectively tracking the progress of the convergence, without spending significant additional computational cost. This progress tracking is important both for designing an effective stopping criterion for the algorithm, as well as to tune the acceleration parameters $\rho$ and $\eta$.

\paragraph{\textbf{Stopping criterion.}} Solvers such as CG commonly use a stopping criterion based on the relative residual error, i.e., $\|\A\x_t-\b\|/\|\b\|\leq \epsilon$ for some target value of $\epsilon$. However, unlike in CG where the residual vector $\A\x_t-\b$ is computed as part of the method, in a Kaczmarz-type solver this vector is never explicitly computed, so using this stopping criterion directly would substantially add to the overall cost.

Instead, we propose to estimate the residual error by reusing the computations from the Kaczmarz updates. Specifically, in each update we compute the vector $\r_t=\A_{S_t}\x_t-\b_{S_t}$, which can be viewed as a sub-sample of the coordinates of $\bar\r_t=\A\x_t-\b$. Thus, we can use $\frac ns\|\r_t\|^2$ as a nearly-unbiased estimate of $\|\bar\r_t\|^2$ (the bias comes due to our block memoization scheme reusing previously sampled subsets; this bias is insignificant in practice). We propose to use a running average of these estimates:
\begin{align}
    \Ec_{t,p} = \frac1{p}\sum_{i=t-p}^{t}\frac ns\|\A_{S_i}\x_i-\b_{S_i}\|^2 \approx \|\A\x_t-\b\|^2.\label{eq:residual}
\end{align}
This leads to our stopping criterion, $\Ec_{t,p} \leq \epsilon^2\|\b\|^2$, where we let $p= n/s$, so that the estimate is most likely based on nearly all of the rows of $\A$ (line \ref{l:stopping} in Algorithm \ref{alg:bcd}).

\paragraph{\textbf{Acceleration Tuning.}} 
We next discuss how we can use runtime information to adaptively tune the acceleration parameters $\rho$ and $\eta$.
Recall from Theorem \ref{lem:converge_accelerate} that the momentum vector recursion $\m_{t+1} = \frac{1-\rho}{1+\rho}(\m_t-\w_t)$ is tied to its guaranteed expected convergence rate $\E\frac{\|\x_t-\x^*\|^2}{\|\x_0-\x^*\|^2}\sim(1-\rho/2)^t$ via the parameter $\rho$. A natural strategy is thus to use the algorithm's ongoing rate of convergence as a proxy for $\rho$, by comparing the residual norm estimates \eqref{eq:residual} at two different iterates.

Specifically, we propose to compute the ratio $r_{i,p} = \Ec_{t_i,p}/\Ec_{t_i-p,p} \approx \frac{\|\A\x_{t_i}-\b\|^2}{\|\A\x_{t_i-p}-\b\|^2}$ at certain checkpoints $t_i$ during the run, and use them to recover a value of $\rho$ that will be used in the momentum recursion. One possibility would be to simply let $\hat\rho_i = 1 - r_{i,p}^{1/p}$, which would give the most up-to-date convergence rate of the algorithm over the last $p$ iterations until iteration $t_i$. However, this results in a feedback loop, since the momentum recursion affects the convergence of the algorithm and vice versa, leading to the convergence rate oscillating up and down.
To achieve stable convergence, we maintain a weighted average $\hat r_i$ of the ratios $r_{i,p}$, and use $\hat\rho_i=1-\hat r_i^{1/p}$ (line \ref{l:rho-proxy}). Concretely, we follow the parameter-free weighted averaging scheme proposed by \cite{na2023hessian}, which ``forgets" older estimates quickly, while converging to a stable estimate:
\begin{align}
\hat r_i = \frac{a_{t_i-1}}{a_{t_i}}\hat r_{i-1} + \Big(1 - \frac{a_{t_i-1}}{a_{t_i}}\Big) r_{i,p},
\quad\text{for}\quad a_{t_i}= (i+1)^{\log(i+1)}.\label{eq:weighted}
\end{align}

Finally, we choose the momentum step size parameter $\eta$ as indicated by our theory. From Theorem \ref{lem:converge_accelerate}, we need $\eta=\Theta(\frac1\nu)$ where $\nu$ is the variance parameter of the regularized projection. Then, using Theorem \ref{l:nu-reg}, we have that $\nu=\tilde O(\frac ns)$, where $s$ is the block size. This suggests $\eta\sim\frac sn$, and we simply set it to $\frac s{2n}$.

Combining the above ideas, we obtain \algpd, given in Algorithm \ref{alg:bcd}. Also, Algorithm \ref{alg:kzpp} in the supplement  provides the complete pseudocode for \alg, including the above adaptive tuning scheme as well as block memoization with preconditioned LSQR from Section \ref{s:blocks}.

\begin{algorithm}[!ht]
\caption{\algpd: Coordinate descent solver for positive semidefinite systems}
\label{alg:bcd}
\begin{algorithmic}[1]
\State \textbf{Input: } $\A\in\mathcal{S}_n^{+}$, $\b \in \R^n$, block size $s$, iterate $\x_0$, regularization $\reg$, tolerance $\epsilon$;
\State Sample $\D\leftarrow \frac1{\sqrt n}\diag(d_1,...,d_n)$\text{ for }$d_i\sim\mathrm{Rademacher}$;
\State $\A\leftarrow \SymFHT(\D\A\D)$, $\b\leftarrow \FHT(\D\b)$;
\Comment{{\footnotesize See Algorithm \ref{alg:symfht}.}}
\State Initialize $\m_0 \leftarrow \mathbf{0}$, $\rho\leftarrow 0$, $\eta\leftarrow \frac s{2n}$, $\Bc\leftarrow \emptyset$, $\zeta\leftarrow \lceil n/s\rceil$, $\Ec_0,\Ec_1\leftarrow 0$;
\For{$t=0,1,...$}
\State \textbf{if} Bernoulli$\big(\min\{\,1,\ \frac1t\cdot \frac ns\log n\,\}\big)$
\textbf{then} \label{l:sampling}
\State \quad\ $\Bc \leftarrow \Bc \cup\{S\}$ \text{for} $S\sim {[n]\choose s}$; \Comment{{\footnotesize Sample new subset.}}
\State \quad\ $\Rb[S] = \textrm{chol}(\A_{S,S}+\lambda\I)$; \Comment{{\footnotesize Save Cholesky factor.}}
\State \textbf{else} $S\sim \Bc$; \textbf{end if} 
\State $\r_t \leftarrow \A_S\x_t-\b_S$;\Comment{{\footnotesize Use for error estimation.}}
\State $\w_t\leftarrow \I_{S}^\top(\A_{S,S}+\lambda\I)^{-1}
\r_t$ using $\Rb[S]$;\Comment{{\footnotesize Coordinate descent.}}\label{l:wcd}
\State $\m_{t+1} \leftarrow \frac{1-\rho}{1+\rho}\big(\m_t - \w_t\big)$;\Comment{{\footnotesize Adaptive momentum.}}\label{l:momentum}
\State $\x_{t+1} \leftarrow \x_t - \w_t + \eta\,\m_{t+1}$;
\State \textbf{if} $t<\zeta \mod 2\zeta$ \textbf{then} $\Ec_0 \leftarrow \Ec_0+\|\r_t\|^2$; \textbf{else} $\Ec_1 \leftarrow \Ec_1+\|\r_t\|^2$; \label{l:update_start}
\If{$t = 2\zeta-1 \mod 2\zeta$}
\State \textbf{if} $\Ec_1\leq \epsilon^2\|\b\|^2$ \textbf{then} \textbf{return $\D\cdot\FHT(\x_{t+1})$}; \Comment{{\footnotesize Stopping criterion.}}\label{l:stopping}
\State $r\leftarrow r a_t + (\Ec_1/\Ec_0) (1-a_t)$; \Comment{{\footnotesize Weighted average \eqref{eq:weighted}.}}
\State $\rho\leftarrow 1 - r^{1/\zeta}$;
\Comment{{\footnotesize Convergence rate estimate.}}\label{l:rho-proxy}
\State $\Ec_0 \leftarrow\Ec_1\leftarrow 0$;
\EndIf \label{l:update_end}
\EndFor
\end{algorithmic}
\end{algorithm}

\section{Numerical Experiments}
\label{s:experiments}

In this section, we present numerical experiments that support our theory. First, we demonstrate that the convergence analysis carried out in Sections \ref{s:reformulation}-\ref{s:blocks} accurately predicts the performance of \alg on a collection of synthetic linear system tasks, by evaluating the effect of adaptive acceleration, block memoization and inexact projections on the convergence rate. Then, focusing on a real-world positive definite system task, we evaluate \algpd\ against popular Krylov solvers, CG \cite{hestenes1952methods} and GMRES \cite{saad1986gmres}, showing that our algorithmic framework achieves better computational cost for certain classes of linear systems that arise naturally in applications such as machine learning, as suggested by our theory. We note that due to the limitations in our computational setup, we evaluate our algorithms on moderately sized matrices, which showcases our theoretical contributions, and do not report running times or test against direct~solvers.

\subsection{Experimental Setup}
We set up our experiments to evaluate how well the solvers exploit large outlying singular values to achieve fast convergence for ill-conditioned systems. To that end, we consider two families of linear systems which naturally exhibit such spectral structure (see Appendix \ref{s:further-experiments} for formal definitions).

\paragraph{\textbf{Synthetic Low-Rank Matrices.}} To gain precise control on the eigenvalue distribution of the system, we first consider a collection of synthetic benchmark matrices with a bell-shaped spectrum, constructed via the \verb~make_low_rank_matrix~ function in Scikit-learn \cite{scikit-learn}. We control the number of large outlying singular values via the parameter \verb~effective_rank~, choosing among four values (25, 50, 100 and 200).

\paragraph{\textbf{Kernel Matrices from Machine Learning.}}
We also consider four real-world benchmark datasets, available through Scikit-learn \cite{scikit-learn} and OpenML \cite{OpenML2013}: Abalone, California\_housing, Covtype, and Phoneme. We transform these datasets using a kernel function to produce a PSD kernel matrix. For the kernel function, we consider two types of radial basis functions (Gaussian and Laplacian), each with two different values of width $\gamma\in\{0.1,0.01\}$. These are known to produce highly ill-conditioned matrices with fast spectral decays, leading to many large outlying eigenvalues \cite{RasmussenWilliams06}.
Each resulting PSD matrix is truncated to dimensions $4096\times4096$. Then, following standard applications in Kernel Ridge Regression \cite{alaoui2015fast}, we augment the matrix with a regularization term $\phi\I$, where $\phi=0.001$.

For each resulting test matrix $\A$, we solve the linear system $\A\x=\b$, where $\b$ is a standard Gaussian vector, and we evaluate the estimates via the normalized residual:
\begin{align}
\epsilon=\|\A\x-\b\|/\|\b\|.\label{eq:residual-error}
\end{align}

\subsection{Verifying Our Convergence Analysis}
To verify our convergence analysis, we implement four variants of \alg and plot the per-iteration convergence on ill-conditioned synthetic low-rank matrices (details in Appendix \ref{s:further-experiments}). Note that for these experiments we are testing \alg on rectangular $4096 \times 1024$ linear systems (constructed via the \texttt{make\_low\_rank\_matrix} function in Scikit-learn \cite{scikit-learn}).

\begin{wraptable}{r}{0.52\textwidth}
\vspace{-1mm}
\centering\begin{tabular}{|c|c|c|}
\hline
Solver Name & Accelerate & Memoize \\
\hline
\texttt{Kaczmarz} & \xmark & \xmark \\
\algmemo & \xmark & \cmark \\
\algaccel & \cmark & \xmark \\
\texttt{Full K++} & \cmark & \cmark \\
\hline
\end{tabular}
\caption{Evaluated variants of \alg. Analogous names apply in Section \ref{s:experiments_krylov} with \textnormal{\texttt{CD++}} instead~of~\textnormal{\algshort}.}
\vspace{-5mm}
\label{tab:kacz-variants}
\end{wraptable}
\textit{\textbf{Inner solver.}} First, we test the influence of computing the inner projection steps inexactly via sketch-and-precondition LSQR (Section~\ref{s:fast-projection}). As shown in Figure~\ref{fig:lsqr_steps}, even very few steps of LSQR suffice to attain the fast linear convergence of \alg. With only $8$ steps of LSQR, there is barely any difference between \alg and the alternative using exact projections. This is supported by our theory: as discussed in Lemma~\ref{l:inner}, by using a preconditioner constructed via SRHT with sketch size proportional to the block size, the condition number of the arising sub-problem can be reduced to $\leq 2$. In our experiments, we observed that using sketch size $\tau$ equal twice the block size is sufficient.

Next, to isolate the effect of its individual
components (such as block size, acceleration, and memoization), we
consider four variants of \alg\ that toggle acceleration and
memoization on/off, as illustrated in Table \ref{tab:kacz-variants}. In Figure
\ref{fig:accel-sample}, we show  convergence plots for synthetic
matrices with effective rank $50$ and $100$, each with block sizes $100$ and $200$ (the plots for matrices with effective rank 25 and 200 are in Appendix \ref{s:experiment_acc}). In
particular, no memoization means that  we sample a new block set $S$
uniformly at random from ${[m]\choose s}$ at every step (and compute
its Cholesky factor), while no acceleration means setting the momentum
step size $\eta$ to $0$.

\begin{figure}
    \centering
    \includegraphics[width=\linewidth]{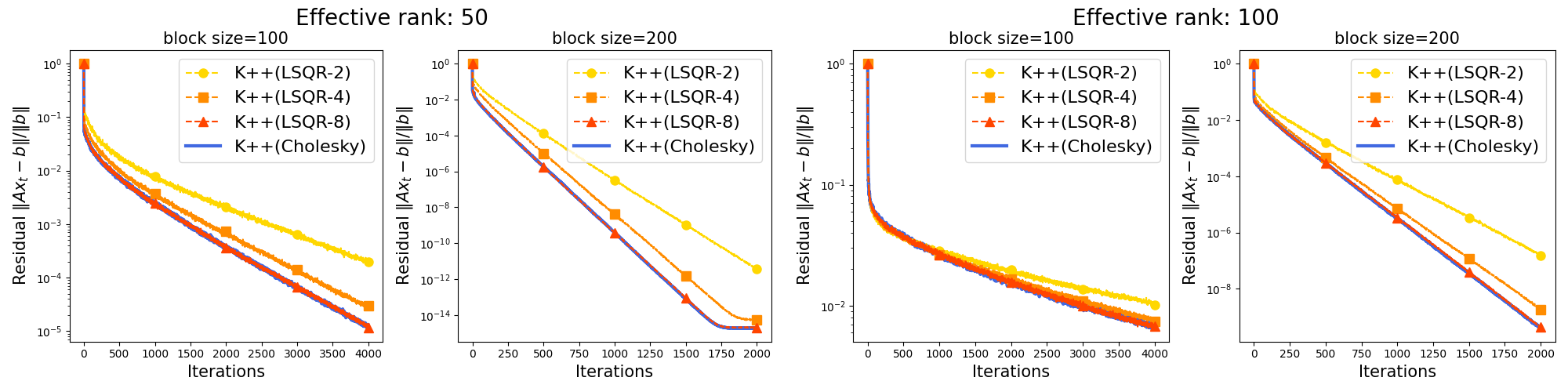}
    \vspace{-8mm}
    \caption{Convergence plots for \alg varying the number of steps of preconditioned LSQR in each regularized projection step, with two block sizes (100, 200) on synthetic test matrices with effective rank 50 (left) and 100 (right). \textnormal{\texttt{K++(LSQR-X)}} is \alg using \textnormal{\texttt{X}} steps of LSQR, while \textnormal{\texttt{K++(Cholesky)}} means \alg with exact inner solver using Cholesky decomposition.}
    \label{fig:lsqr_steps}
\end{figure}

\begin{figure}
    \centering
    \includegraphics[width=\linewidth]{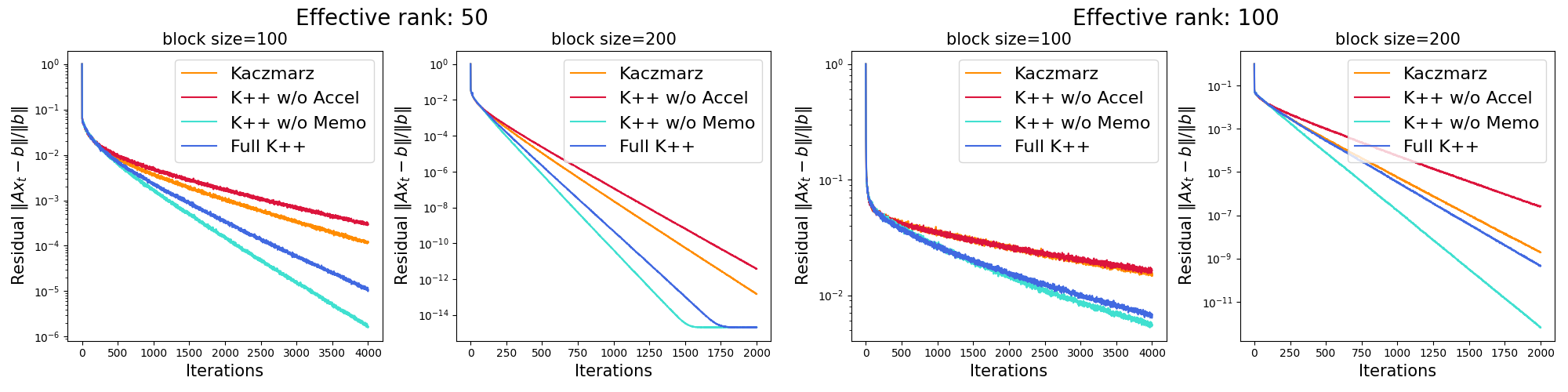}
    \vspace{-8mm}
    \caption{Convergence plots for different variants of \alg (see Table \ref{tab:kacz-variants}) using two block sizes (100, 200), on synthetic test matrices with effective rank 50 (left) and 100 (right). Note that the iteration range in each plot is scaled to keep \emph{(iterations}\,$\times$\,\emph{block-size)} consistent.}
    \label{fig:accel-sample}
\end{figure}

\paragraph{\textbf{Block size.}}
First, observe that as we increase the block size, all of the methods achieve faster per-iteration convergence rate, as expected from our theory. Looking closely, we can see that this rate improves more than just proportionally to the increase in block size, which means that larger blocks lead to greater efficiency in terms of how many rows of $\A$ need to be processed by the algorithm to reach certain accuracy. This corresponds to the $\bar\kappa_k$ condition number in our theory, which decreases as the block size increases, indicating that the methods do exploit large outlying singular values. Also, as we increase the effective rank of the matrix (i.e., the number of large singular values, $k$ in our theory), we need larger block sizes to attain fast convergence.

\paragraph{\textbf{Adaptive acceleration.}} Comparing \texttt{Full K++} and \algmemo, we see that our adaptive acceleration significantly improves the convergence rate, particularly for the smaller block size 100. Note that if the block size is chosen to be much larger than the effective rank, then the benefit of acceleration will necessarily get diminished, because in this case the condition number $\bar\kappa_k$ is just a small constant.

\paragraph{\textbf{Block memoization.}} Comparing \texttt{Full K++} and \algaccel, we see that introducing our block memoization scheme (i.e., sampling from a small collection of random blocks) only slightly reduces the per-iteration convergence rate compared to the method that samples a new random block at every step. This is explained by our analysis in Theorem \ref{thm:block_memo}. The benefits of block memoization become apparent once we compare the computational cost of both procedures, as discussed below (Figure~\ref{fig:flops-sample}).

\paragraph{\textbf{Regularized projections.}} In Appendix \ref{s:experiments-regularization}, we also tested the effect of different choices of the projection regularizer $\lambda$ on our methods. In all our experiments, the convergence remained largely unchanged for any $\lambda\in[0,0.01]$ (our theory requires $\lambda>0$). This suggests that regularizing the Kaczmarz projection can be done without sacrificing convergence, and so, we recommend using a small but positive~$\lambda$ to ensure stable computation of the Cholesky factors (we used $\lambda=10^{-8}$ as a default).

\begin{figure}
    \centering
\hspace{-5mm}
\resizebox{\linewidth}{!}{
\begin{tikzpicture}
\begin{axis}[
    ybar,
    width=17cm,  
    height=5cm,
    bar width=26.85pt,
    enlarge x limits=0.13,
    ymin=0, ymax=8.5,
    legend style={at={(0.15, 1.19)}, anchor=north west, legend columns=4, reverse legend,font=\small},
    symbolic x coords={Gaussian$\ \gamma$=0.1, Gaussian$\ \gamma$=0.01, Laplacian$\ \gamma$=0.1, Laplacian$\ \gamma$=0.01},
    xtick=data, 
    ylabel={FLOPs ($\times 10^{9}$)},
    ylabel style={font=\footnotesize},
    xticklabel style={font=\footnotesize},
    yticklabel style={font=\footnotesize},
]

\addplot+[
    bar shift=+0.48cm,
    fill=cyan!20,
    draw=cyan!20
]
coordinates {(Gaussian$\ \gamma$=0.1, 3.26) (Gaussian$\ \gamma$=0.01, 2.11) (Laplacian$\ \gamma$=0.1, 8.13) (Laplacian$\ \gamma$=0.01, 3.09)};

\addplot+[
    bar shift=+0.48cm,
    fill=cyan!60,
    draw=cyan!60
]
coordinates {(Gaussian$\ \gamma$=0.1, 0.464) (Gaussian$\ \gamma$=0.01, 0.297) (Laplacian$\ \gamma$=0.1, 2.22) (Laplacian$\ \gamma$=0.01, 0.240)};

\addplot+[bar shift=-0.48cm,
    fill=red!20,
    draw=red!20
]
coordinates {(Gaussian$\ \gamma$=0.1, 5.17) (Gaussian$\ \gamma$=0.01, 1.75) (Laplacian$\ \gamma$=0.1, 8.14) (Laplacian$\ \gamma$=0.01, 4.11)};

\addplot+[
    bar shift=-0.48cm,
    fill=red!60,
    draw=red!60
]
coordinates {(Gaussian$\ \gamma$=0.1, 1.47) (Gaussian$\ \gamma$=0.01, 0.85) (Laplacian$\ \gamma$=0.1, 2.72) (Laplacian$\ \gamma$=0.01, 0.541)};

\legend{\algpd (1e-8), \algpd (1e-4), GMRES (1e-8), GMRES (1e-4)}
\end{axis}
\end{tikzpicture}
}
\includegraphics[width=\linewidth]{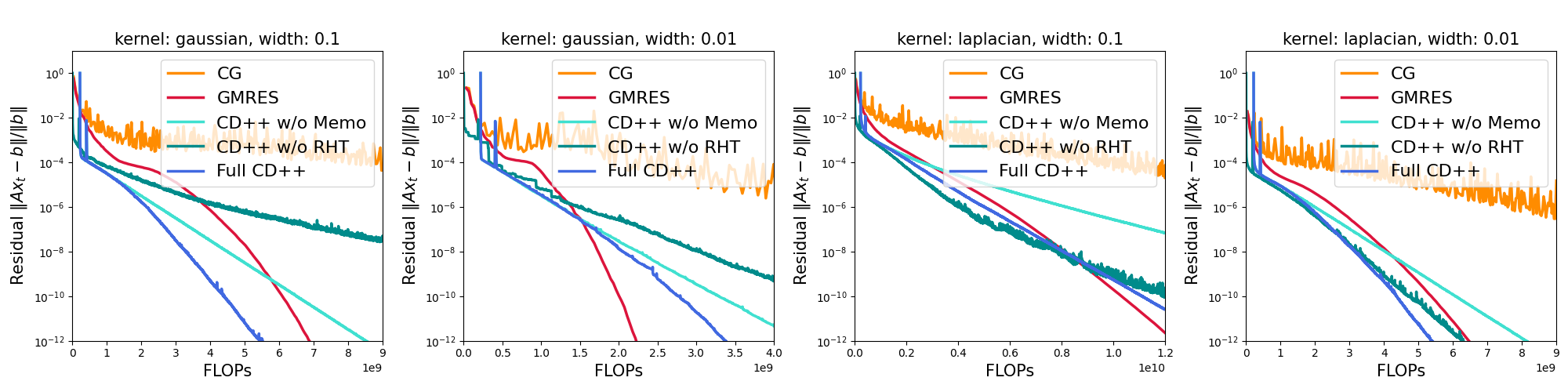}
\vspace{-8mm}    
    \caption{Computational cost comparison, measuring floating point operations (FLOPs) needed to reach a given error threshold on four kernel matrices constructed from the Abalone dataset. Above, we show total FLOPs for GMRES and \algpd\ to reach one of two error thresholds, $\epsilon\in\{10^{-4},10^{-8}\}$. Below, we show convergence-vs-FLOPs plots, including CG and \algpdaccel\ as additional baselines. We also include \algpd w/o RHT to test the influence of RHT on the convergence.}
    \label{fig:flops-sample}
\end{figure}

\subsection{Comparison with Krylov Subspace Methods}\label{s:experiments_krylov}
Next, we evaluate the computational cost of our methods, comparing them to Krylov solvers. Specifically, we count floating point operations (FLOPs) needed to reach a given error threshold for the positive definite linear systems arising from benchmark kernel matrices in machine learning. Here, we show the results for the Abalone dataset, while results for the remaining test matrices are in Appendix \ref{s:experiment_flops}.
In Figure \ref{fig:flops-sample} (bottom) we show the convergence plots of the methods, with FLOPs instead of iterations on the x-axis (this includes the cost of RHT pre-processing, when appropriate). Since the tasks are positive definite, we focus on evaluating different variants of \algpd. 

First, we compare \texttt{Full CD++} and \texttt{CD++ w/o Memo} (i.e., Algorithm~\ref{alg:bcd} with and without block memoization, using block size 200). We see that, even though block memoization slightly reduces the per-iteration convergence (Figure \ref{fig:accel-sample}), it more than makes up for this in the computations. In particular, \texttt{Full CD++} exhibits a phase transition where it accelerates past \texttt{CD++ w/o Memo} once enough blocks have been memoized and it no longer pays for the block Cholesky factorizations.

We also evaluate the effect of RHT preprocessing on the overall computational cost of the method by comparing \texttt{Full CD++} and \texttt{CD++ w/o RHT}. The preprocessing cost itself is relatively negligible, as can be seen by how much the beginning of the convergence curve of \texttt{Full CD++} is shifted away from 0 (by around 0.2e9 FLOPS). Furthermore, RHT provides a substantial gain in overall performance for roughly half of the test matrices, and this can be seen consistently across the remaining plots in the supplement. Importantly, even without RHT preprocessing, \algpd\ exhibits good convergence for most of the problems in our testing pool. These results suggest the possibility that \texttt{CD++ w/o RHT} may also be effective for sparse systems, where RHT preprocessing is not desirable. However, further empirical evidence on large-scale sparse systems could be interesting future work to address this possibility. 

Finally, we compare the computational cost of \algpd\ with two Krylov solvers\footnote{We use the SciPy implementation of CG \cite{virtanen2020scipy} and the PyAMG implementation of GMRES \cite{pyamg2023}, which were chosen based on how amenable they are to our implementation of FLOPs counting.}, CG and GMRES. First, we observe that CG struggles to converge on all of the tested matrices. This suggests that the systems are indeed ill-conditioned, and CG cannot overcome this effectively due to numerical stability issues. On the other hand, GMRES avoids these issues by maintaining an explicit Krylov basis, and thus after a number of initial iterations, it exploits the large outlying eigenvalues to achieve fast convergence. 

Thus, in most cases, both \algpd\ and GMRES exhibit two distinct phases of convergence, as suggested by the theory. However, in the majority of our test cases, the second phase of \algpd\ starts sooner than for GMRES, matching our complexity analysis from equations \eqref{eq:intro-kaczmarz} and \eqref{eq:intro-krylov}. This gives \algpd\ a computational advantage over GMRES, particularly in the low-to-moderate precision regime (say, $\epsilon>10^{-6}$). In the high precision regime (say, $\epsilon<10^{-6}$), \algpd\ maintains its fast convergence, while GMRES, in some cases, further accelerates by exploiting smaller isolated eigenvalues. 

These overall trends are reflected in Figure \ref{fig:flops-sample} (top), showing the total FLOP counts of GMRES and \algpd\ with two error thresholds, $\epsilon\in\{10^{-4},10^{-8}\}$. We see that  for $\epsilon=10^{-4}$, \algpd\ is consistently more efficient than GMRES, while for $\epsilon=10^{-8}$, GMRES overtakes \algpd in some cases. Across all tested matrices (see Appendix~\ref{s:experiment_flops} for details), we observed that for $\epsilon=10^{-4}$, \algpd\ performed better than GMRES in 18 out of 20 cases, while for $\epsilon=10^{-8}$, it did so in 14 out of 20 cases. Interestingly, \texttt{CD++ w/o RHT} outperformed GMRES on all 20 test matrices for $\epsilon=10^{-4}$, while for $\epsilon=10^{-8}$, it did so in 9 out of 20 cases. This suggests that RHT preprocessing provides a significant computational benefit mainly in the high-precision regime, and that it might be preferable to skip this step when moderate precision is sufficient.

\section{Conclusions}
\label{s:conclusions}

We developed new Kaczmarz methods, called \alg\ and \algpd, which exploit large outlying singular values to attain fast convergence in solving many ill-conditioned linear systems. We demonstrated both theoretically and empirically that our methods outperform Krylov solvers such as CG and GMRES on certain classes of problems that arise naturally, for instance, in the machine learning literature. Along the way, we introduced and analyzed several novel algorithmic techniques to the Kaczmarz framework, including adaptive acceleration, regularized projections and block memoization.

A potential future direction is to develop Kaczmarz-type methods that can exploit not just large but also small isolated singular values to achieve fast convergence, as motivated by applications in partial differential equations, among others. Another natural question is whether it is possible to adapt Krylov subspace methods to take advantage of row-sampling techniques in order to improve their computational guarantees beyond what is possible through only matrix-vector products. Finally, large-scale implementation and experiments evaluating our methods in terms of wall-clock time, as well as for sparse problems, is an exciting next step.

\subsubsection*{Acknowledgments} Thanks to Daniel LeJeune for helpful discussions regarding accelerated sketch-and-project, and for sharing the programming environment. Also, thanks to Sachin Garg for helpful discussions on block memoization. We also thank the editor and reviewers for their useful feedback that greatly improved the manuscript.

\bibliographystyle{alpha}
\bibliography{simax.bib}

\appendix

\section{Acceleration Analysis: Proofs of Lemmas \ref{lem:converge_accelerate_form2} and \ref{lem:equivalence}}

\subsection{Proof of Lemma~\ref{lem:converge_accelerate_form2}}\label{s:inexact}
\begin{proof}[Proof of Lemma~\ref{lem:converge_accelerate_form2}]
First, we show that $1\leq \nu\leq 1/\mu$. This follows since:
\begin{align*}    
1 &=
\|(\bar\P_\lambda^{\dagger/2}\E[\P_{\lambda,S}]\bar\P_\lambda^{\dagger/2})^2]\|
\leq \|\E[(\bar\P_\lambda^{\dagger/2}\P_{\lambda,S}\bar\P_\lambda^{\dagger/2})^2]\|=\nu,\\
\nu&\leq \|\bar\P_\lambda^\dagger\|\|\bar\P_\lambda^{\dagger/2}\E[\P_{\lambda,S}^2]\bar\P_\lambda^{\dagger/2}\| \leq\|\bar\P_\lambda^\dagger\|= 1/\mu,
\end{align*}
where we used Jensen's inequality and that $\P_{\lambda,S}\preceq \I$.

Our proof of the convergence guarantee follows closely the steps of \cite{derezinski2024fine}, who introduced inexact updates into the argument of \cite{gower2018accelerated}.
Denote $\e_t \coloneqq \tilde{\w}_t - \w_t$ as the approximation error, and denote $\v_{t+1}^* \coloneqq \v_{t+1} + \gamma \tilde{\w}_t - \gamma\w_t = \v_{t+1} + \gamma \e_t$ as the exact update. Let $\r_t \coloneqq \|\v_t - \x^*\|_{\bar{\P}_{\reg}^\dagger}$ and $\r_t^* \coloneqq \|\v_t^* - \x^*\|_{\bar{\P}_{\reg}^\dagger}$ where $\bar{\P}_{\reg} = \E[\P_{\reg, S}]$. We have the following: 
\begin{align}\label{eq:error_1}
\E[\r_{t+1}^2] = & ~ \E[\|\v_{t+1} - \x^*\|_{\bar{\P}_{\reg}^\dagger}^2] = \E[\|\v_{t+1}^* - \x^* - \gamma\e_t\|_{\bar{\P}_{\reg}^\dagger}^2] \nonumber \\
\leq & ~ (1+\phi)\E[(\r_{t+1}^*)^2] + (1+\frac{1}{\phi})\E[\|\gamma\e_t\|_{\bar{\P}_{\reg}^\dagger}^2],
\end{align}
where we use the fact that $\phi a^2 + \frac{1}{\phi} b^2 \geq 2ab$ for $\phi > 0$, and we will specify $\phi$ later. According to Appendix A.3 in \cite{gower2018accelerated}, we can decompose $(\r_{t+1}^*)^2$ into three parts:
\begin{align*}
(\r_{t+1}^*)^2 = & ~ \underbrace{\left\|\beta \v_t + (1-\beta) \x_t - \x^*\right\|_{\bar{\P}_{\reg}^{\dagger}}^2}_I + \gamma^2 \underbrace{\left\|\P_{\reg,S}\left(\x_t - \x^*\right)\right\|_{\bar{\P}_{\reg}^{\dagger}}^2}_{II} \\
& ~ - 2\gamma \underbrace{\left\langle\beta\left(\v_t - \x^*\right) + (1 - \beta)\left(\x_t - \x^*\right), \bar{\P}_{\reg}^{\dagger} \P_{\reg,S}\left(\x_t - \x^*\right)\right\rangle}_{III}
\end{align*}
We upper bound the three terms separately. For the first term, following \cite{gower2018accelerated} and with the use of a parallelogram identity, we have
\begin{align*}
I = \|\beta(\v_t - \x^*) + (1-\beta) (\x_t - \x^*)\|_{\bar{\P}_{\reg}^{\dagger}}^2 \leq \beta \r_t^2 + (1-\beta) \|\x_t - \x^*\|_{\bar{\P}_{\reg}^{\dagger}}^2.
\end{align*}
For the second term, since $\nu \leq \tilde{\nu}$, we have
\begin{align*}
\E[II \mid \x_t] = & ~ \E\left[\left\|\P_{\reg,S} \left(\x_t - \x^*\right)\right\|_{\bar{\P}_{\reg}^{\dagger}}^2 \mid \x_t \right] = \left\langle\E\left[\P_{\reg,S} \bar{\P}_{\reg}^\dagger \P_{\reg,S}\right] \left(\x_t - \x^*\right), \left(\x_t - \x^*\right)\right\rangle \\
\leq & ~ \nu \cdot \langle \bar{\P}_{\reg} (\x_t - \x^*), \x_t - \x^* \rangle \leq \tilde{\nu} \cdot \|\x_t - \x^*\|_{\bar{\P}_{\reg}}^2
\end{align*}
where in the third step we use the definition of $\nu$. For the third term we have
\begin{align*}
\E[III \mid \x_t, \v_t, \y_t] = & ~ \left\langle \beta \v_t + (1-\beta) \x_t - \x^*, \bar{\P}_{\reg}^{\dagger} \bar{\P}_{\reg} \left(\x_t - \x^*\right)\right\rangle \\
= & ~ \left\langle \beta \v_t + (1-\beta) \x_t - \x^*, \x_t - \x^*\right\rangle \\
= & ~ \left\langle \x_t - \x^* + \frac{\beta(1-\alpha)}{\alpha} (\x_t - \y_t), \x_t - \x^*\right\rangle \\
= & ~ \|\x_t - \x^*\|^2 - \frac{\beta(1-\alpha)}{2\alpha} \left(\|\y_t - \x^*\|^2 - \|\x_t - \y_t\|^2 - \|\x_t - \x^*\|^2\right)
\end{align*}
where the second step follows from the assumption that $\x_t - \x^* \in \range(\bar{\P}_{\reg})$ and also the property of pesudoinverse that $\bar{\P}_{\reg}^\dagger \bar{\P}_{\reg} \w = \w$ for any $\w \in \range(\bar{\P}_{\reg})$, the third step follows from the first equation of \eqref{alg:main_analyze} and the last step follows from the parallelogram identity. Moreover, by using the fact that $\P_{\reg,S}^2 \preceq \P_{\reg,S}$ we have
\begin{align*}
\E\left[\left\|\y_{t+1} - \x^*\right\|^2 \mid \x_t\right] = & ~ \E\left[\left\|\left(\I - \P_{\reg,S}\right)\left(\x_t - \x^*\right) - \e_t\right\|^2 \mid \x_t\right] \\
\leq & ~ (1+\phi) \left\langle(\I - \bar{\P}_{\reg})\left(\x_t - \x^*\right), \x_t - \x^*\right\rangle + \left(1+\frac{1}{\phi}\right)\E[\|\e_t\|^2] \\
= & ~ (1+\phi)\left(\left\|\x_t - \x^*\right\|^2 - \left\|\x_t - \x^*\right\|_{\bar{\P}_{\reg}}^2 \right) + \left(1+\frac{1}{\phi}\right)\E[\|\e_t\|^2].
\end{align*}
By combining the above four bounds, we have the following bound for $\E[(\r_{t+1}^*)^2]$:
\begin{align}\label{eq:error_2}
& ~\E[(\r_{t+1}^*)^2] \nonumber \\
= & ~ I + \gamma^2 \E[II \mid \x_t] - 2\gamma \E[III \mid \x_t, \v_t, \y_t] \nonumber \\
\leq & ~ \beta \r_t^2 + (1-\beta) \|\x_t - \x^*\|_{\bar{\P}_{\reg}^{\dagger}}^2 + \gamma^2 \tilde{\nu} \|\x_t - \x^*\|_{\bar{\P}_{\reg}}^2 \nonumber \\
& ~ - 2\gamma\left(\|\x_t - \x^*\|^2 - \frac{\beta(1-\alpha)}{2\alpha} \left(\|\y_t - \x^*\|^2 - \|\x_t - \y_t\|^2 - \|\x_t - \x^*\|^2\right)\right) \nonumber \\
\leq & ~ \beta \r_t^2 + \frac{1-\beta}{\tilde{\mu}} \|\x_t - \x^*\|^2 \nonumber \\
& ~ + \gamma^2 \tilde{\nu} \left(\|\x_t - \x^*\| - \frac{1}{1+\phi}\E\left[\left\|\y_{t+1} - \x^*\right\|^2 \mid \x_t\right] + \frac{1+1/\phi}{1+\phi} \E[\|\e_t\|^2]\right) \nonumber \\
& ~ - 2\gamma\left(\|\x_t - \x^*\|^2 - \frac{\beta(1-\alpha)}{2\alpha} \left(\|\y_t - \x^*\|^2 - \|\x_t - \x^*\|^2\right)\right)
\end{align}
where in the last step we use the fact that $\|\bar{\P}_{\reg}^\dagger\| \leq 1/\mu$ and $\mu \geq \tilde{\mu}$.
For the bound on $\e_t = \tilde{\w}_t - \w_t$, supposing that $\|\e_t\|\leq\epsilon_1\|\x_t - \x^*\|$, we have
\begin{align}
\E[\|\e_t\|^2] \leq \epsilon_1^2 \cdot \E[\|\x_t - \x^*\|^2] = \epsilon_1^2 \cdot \|\x_t - \x^*\|^2\label{eq:error_3}
\end{align}
and also
\begin{align}
\E[\|\e_t\|_{\bar{\P}_{\reg}^\dagger}^2] \leq \|\bar{\P}_{\reg}^\dagger\| \cdot \E[\|\e_t\|^2] \leq \frac{\epsilon_1^2}{\tilde{\mu}} \cdot \|\x_t - \x^*\|^2.
\label{eq:error_4}
\end{align}

Finally, by combining \eqref{eq:error_1}, \eqref{eq:error_2}, \eqref{eq:error_3} and \eqref{eq:error_4} we have 
\begin{align*}
& ~ \E[\r_{t+1}^2 + \gamma^2 \tilde{\nu} \|\y_{t+1} - \x^*\|^2] \\
\leq & ~ (1+\phi)\E[(\r_{t+1}^*)^2] + (1+\frac{1}{\phi})\E[\|\gamma\e_t\|_{\bar{\P}_{\reg}^\dagger}^2] + \gamma^2 \tilde{\nu} \E[\|\y_{t+1} - \x^*\|^2] \\
\leq & ~ (1+\phi)\beta \r_t^2 + \beta(1+\phi)\underbrace{\frac{\gamma(1-\alpha)}{\alpha}}_{P_1} \|\y_t - \x^*\|^2 \\
& ~ + (1+\phi) \underbrace{\left(\frac{1-\beta}{\tilde{\mu}} + \gamma^2 \tilde{\nu} - 2\gamma - \frac{\gamma\beta(1-\alpha)}{\alpha} + \frac{\gamma^2 \epsilon_1^2 }{\phi}(\tilde{\nu} + \frac{1}{\tilde{\mu}})\right)}_{P_2}\|\x_t - \x^*\|^2.
\end{align*}
By choosing $\alpha = \frac{1}{1+\gamma \tilde{\nu}}$ and $\gamma = \frac{1}{\sqrt{\tilde{\nu}\tilde{\mu}}}$ we have $P_1 = \gamma^2 \tilde{\nu} = \frac{1}{\tilde{\mu}}$. By choosing $\beta = 1 - \sqrt{\frac{\tilde{\mu}}{\tilde{\nu}}}$ we have $\beta(1+\phi) \leq 1 - \frac{1}{2}\sqrt{\frac{\tilde{\mu}}{\tilde{\nu}}}$ holds for $\phi = \frac{\sqrt{\tilde{\mu} / \tilde{\nu}}}{2(1 - \sqrt{\tilde{\mu} / \tilde{\nu}})}$. By further setting $\epsilon_1 \leq \frac{\tilde{\mu}}{4} \leq \frac{\tilde{\mu}}{\sqrt{8(\tilde{\mu}\tilde{\nu}+1)}}$ we also achieve $P_2 \leq 0$. Finally, denote $\Delta_t = \|\v_t-\x^*\|_{\bar{\P}_{\reg}^\dagger}^2 + \frac{1}{\tilde\mu}\|\y_t-\x^*\|^2 = \r_t + \frac{1}{\tilde\mu}\|\y_t-\x^*\|^2$, we conclude that
\begin{align*}
\E[\Delta_{t+1}] \leq \left(1 - \frac{1}{2} \sqrt{\frac{\tilde{\mu}}{\tilde{\nu}}}\right) \cdot \E\left[\r_t^2 + \frac{1}{\tilde{\mu}} \|\y_t - \x^*\|^2\right] = \left(1 - \frac{1}{2} \sqrt{\frac{\tilde{\mu}}{\tilde{\nu}}}\right) \cdot \E[\Delta_t].
\end{align*}
\end{proof}
\subsection{Proof of Lemma~\ref{lem:equivalence}}
\label{s:appendix_proof_equivalence}
\begin{proof}[Proof of Lemma~\ref{lem:equivalence}]
Recall that $1\leq \nu\leq 1/\mu$, which implies $\bar\rho=\sqrt{\mu/\nu}\leq 1/\nu$. Moveover, $\frac{c}{\nu} \le \eta \le \frac{1}{\nu} \le 1$ and $\rho \leq c\bar\rho\leq
  c/\nu\leq \eta$.

(a) Then, 
\begin{align*}
\nu \le \frac{1}{2 \eta} \le \frac{1}{\rho + \eta(1 - \rho)} = \tilde\nu
\end{align*}
and
\begin{align*}\tilde\mu = \frac{c^2\bar\rho^2}{\rho + \eta(1 - \rho)} \le \frac{c^2\bar\rho^2}{\eta} = \frac{c}{\eta} c\bar\rho^2 \le \nu c\bar\rho^2 = \mu c \le \mu.
\end{align*}

(b) Note that the choice of $\tilde\mu$ and $\tilde\nu$ and the statement of Lemma~\ref{lem:converge_accelerate_form2} implies
\begin{align}\label{eq:para_rho}
\beta = 1-\rho, \quad \gamma = 1 + \frac{\eta}{\rho} - \eta, \quad\text{and}\quad \alpha = \frac{\rho}{1+\rho}.    
\end{align}
Let $\m_t := \frac{\beta(1-\alpha)}{\gamma - 1}(\v_t-\y_t)$ for $t \ge 0$. Let's check that then it satisfies the recurrence relation for $\m_t$. Indeed, from the first equation of \eqref{alg:main_analyze}, 
$$\x_t-\v_t = (1-\alpha)(\y_t-\v_t),$$ and then, from the last two equations of \eqref{alg:main_analyze},
\begin{align*}
\v_{t+1}-\y_{t+1}
    &=(\beta\v_t+(1-\beta)\x_t - \gamma \tilde\w_t) - (\x_t-\tilde\w_t)
    \\
    &=\beta(\v_t-\x_t) - (\gamma-1)\tilde\w_t\\
    &=\beta(1-\alpha)(\v_t-\y_t) - (\gamma-1)\tilde\w_t\\
    &=(\gamma-1)(\m_t - \tilde\w_t).
\end{align*}
By combining the above result with \eqref{eq:para_rho}, we have
\begin{align*}
\m_{t+1} = \frac{\beta(1-\alpha)}{\gamma-1}(\v_{t+1} - \y_{t+1}) = \beta(1-\alpha)(\m_t - \tilde\w_t) = \frac{1-\rho}{1+\rho}(\m_t - \tilde\w_t).
\end{align*}
For the update of $\x_{t+1}$, from \eqref{alg:main_analyze}, the definition of $\m_t$, and \eqref{eq:para_rho}, we have
\begin{align*}
\x_{t+1} = \y_{t+1} + \alpha(\v_{t+1}-\y_{t+1}) = (\x_t - \tilde\w_t) + \frac{\alpha(\gamma-1)}{\beta(1-\alpha)}\m_{t+1} = \x_t - \tilde\w_t + \eta \m_{t+1}.
\end{align*}
This concludes the proof of Lemma~\ref{lem:equivalence}.
\end{proof}

\section{Regularized DPPs: Proof of Lemma~\ref{l:mu-reg-exact}}
\label{s:appendix_proof_mu}
\begin{proof}[Proof of Lemma~\ref{l:mu-reg-exact}]
To bound the term $\E[(\I + \frac{m}{k\bar{\lambda}}\A_{\setS_{\DPP}}^\top\A_{\setS_{\DPP}} )^{-1}]$ for $\setS_{\DPP} \sim \DPP(\frac{m}{\bar{\lambda}(m-k)}\A\A^\top + \frac{k}{m-k}\I)$, we first introduce the following notion of Regularized DPP (R-DPP). We then show that our DPP sample is equivalent to an R-DPP (in distribution) in Lemma~\ref{lem:equivalence_dpp}.
\begin{definition}[Regularized DPP, Definition 2 of \cite{derezinski2020bayesian}]
Given matrix $\A\in\R^{m \times n}$, let $\a_i^\top$ denote its $i$th row. For a sequence $p = (p_1, \ldots, p_m)\in[0,1]^m$ and $\lambda>0$, define $\RDPP_p(\A,\lambda)$ as a distribution over $S\subseteq [m]$ such that
\begin{align*}
\Pr\{\setS\} = \frac{\det(\A_{\setS}^\top \A_{\setS} + \lambda\I)}{\det(\sum_i p_i \a_i\a_i^\top + \lambda\I)} \cdot \prod_{i\in S} p_i \cdot \prod_{i\notin S} (1-p_i).
\end{align*}
\end{definition}
\begin{lemma}[Lemma 7 in \cite{derezinski2020bayesian}]\label{lem:equivalence_dpp}
Given $\A\in\R^{m \times n}, \lambda > 0$ and $p \in [0,1)^m$, denote $\D_p \coloneqq \diag(p)$ and $\tilde{\D} \coloneqq \D_p (\I - \D_p)^{-1}$, then we have
\begin{align*}
\RDPP_p(\A, \lambda) = \DPP(\tilde{\D} + \lambda^{-1}\tilde{\D}^{1/2}\A\A^\top\tilde{\D}^{1/2})
\end{align*}
which means that the R-DPP is equivalent to a DPP from Definition \ref{d:dpp}. 
\end{lemma}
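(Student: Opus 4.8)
The plan is to show that $\DPP(\Lb)$ and $\RDPP_p(\A,\lambda)$, with $\Lb := \tilde{\D} + \lambda^{-1}\tilde{\D}^{1/2}\A\A^\top\tilde{\D}^{1/2}$, assign the same mass to every subset $\setS\subseteq[m]$. Since $\Lb$ is PSD, $\DPP(\Lb)$ is well-defined and $\Pr_{\DPP(\Lb)}(\setS) = \det(\Lb_{\setS,\setS})/\det(\I+\Lb)$. Writing $t_i := p_i/(1-p_i)$ so that $\tilde{\D} = \diag(t_1,\dots,t_m)$ and both $\tilde{\D}$ and $\tilde{\D}^{1/2}$ are diagonal (hence restrict to principal submatrices blockwise), the first step is to factor the principal submatrix as $\Lb_{\setS,\setS} = \tilde{\D}_{\setS,\setS}^{1/2}\bigl(\I + \lambda^{-1}\A_{\setS}\A_{\setS}^\top\bigr)\tilde{\D}_{\setS,\setS}^{1/2}$, so that by the Sylvester determinant identity (moving to the $n$-dimensional space) and the rewriting $\prod_{i\in\setS}t_i = \bigl(\prod_i(1-p_i)\bigr)^{-1}\prod_{i\in\setS}p_i\prod_{i\notin\setS}(1-p_i)$,
\begin{align*}
\det(\Lb_{\setS,\setS}) = \frac{\lambda^{-n}}{\prod_i(1-p_i)}\,\det\bigl(\A_{\setS}^\top\A_{\setS}+\lambda\I\bigr)\prod_{i\in\setS}p_i\prod_{i\notin\setS}(1-p_i).
\end{align*}

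The second step is to evaluate the normalizer $\det(\I+\Lb)$. The key observation is $\I+\tilde{\D} = (\I-\D_p)^{-1}$, so conjugating $\I+\Lb = (\I-\D_p)^{-1} + \lambda^{-1}\tilde{\D}^{1/2}\A\A^\top\tilde{\D}^{1/2}$ by $(\I-\D_p)^{1/2}$ and using $(\I-\D_p)^{1/2}\tilde{\D}^{1/2} = \D_p^{1/2}$ reduces it to $\det((\I-\D_p)^{-1})$ times $\det(\I + \lambda^{-1}\D_p^{1/2}\A\A^\top\D_p^{1/2})$; applying Sylvester once more and using $\A^\top\D_p\A = \sum_i p_i\a_i\a_i^\top$ gives
\begin{align*}
\det(\I+\Lb) = \frac{\lambda^{-n}}{\prod_i(1-p_i)}\,\det\Bigl(\sum_i p_i\a_i\a_i^\top+\lambda\I\Bigr).
\end{align*}
Dividing the two displays, the common prefactor $\lambda^{-n}/\prod_i(1-p_i)$ cancels, and $\Pr_{\DPP(\Lb)}(\setS)$ becomes exactly the R-DPP probability from the definition, establishing $\DPP(\Lb) = \RDPP_p(\A,\lambda)$; as a by-product this confirms that the R-DPP weights sum to one.

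The work here is careful linear-algebra bookkeeping rather than anything probabilistic: the main things to watch are that the two Sylvester applications are carried out in the right ambient dimension so that the $\lambda^{-n}$ and $\prod_i(1-p_i)$ factors line up and cancel, and that the boundary case $p_i=0$ (where $t_i=0$) needs no separate treatment, since all of the determinant identities above remain valid in that case and both distributions then assign zero mass to every set containing $i$.
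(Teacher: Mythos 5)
Your proof is correct, and it checks out in detail: the blockwise factorization $\Lb_{\setS,\setS} = \tilde{\D}_{\setS,\setS}^{1/2}(\I + \lambda^{-1}\A_{\setS}\A_{\setS}^\top)\tilde{\D}_{\setS,\setS}^{1/2}$, the two applications of the Sylvester identity in the ambient dimension $n$ (so that both displays carry the same $\lambda^{-n}$ factor), the identity $\I+\tilde{\D} = (\I-\D_p)^{-1}$ used to evaluate $\det(\I+\Lb)$, and the standard L-ensemble normalization $\sum_{\setS}\det(\Lb_{\setS,\setS}) = \det(\I+\Lb)$ all combine to give exactly the R-DPP probability, with the prefactor $\lambda^{-n}/\prod_i(1-p_i)$ cancelling as you say. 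The one thing to note is that the paper does not prove this statement at all: it is imported verbatim as Lemma 7 of the cited reference \cite{derezinski2020bayesian}, so there is no in-paper argument to compare against. Your direct determinantal verification is a legitimate, self-contained substitute for that citation (and essentially reconstructs the computation one would expect in the original source), including the harmless boundary case $p_i=0$ and the observation that the calculation also certifies that the R-DPP weights sum to one.
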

According to Lemma~\ref{lem:equivalence_dpp}, by choosing $p = (\frac{k}{m}, \ldots, \frac{k}{m})$ we have $\DPP(\frac{m}{\bar{\lambda}(m-k)}\A\A^\top + \frac{k}{m-k}\I) = \RDPP_p(\A, \frac{k}{m}\bar{\lambda})$, which shows that this DPP can be equivalently viewed as an R-DPP. For an R-DPP we have the following bound.

\begin{lemma}[Lemma 11 in \cite{derezinski2020bayesian}]\label{lem:dlm11}
For $\setS \sim \RDPP_p(\A,\lambda)$ and $p \in [0,1)^m$, 
\begin{align*}
\E\left[ \left(\A_{\setS}^\top\A_{\setS} + \lambda\I\right)^{-1}\right] \preceq \left(\sum_i p_i \a_i\a_i^\top + \lambda\I \right)^{-1}.
\end{align*}
\end{lemma}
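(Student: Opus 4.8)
The plan is to prove the slightly stronger statement that the bound holds with equality, $\E[(\A_S^\top\A_S+\lambda\I)^{-1}] = (\sum_i p_i\a_i\a_i^\top+\lambda\I)^{-1}$, from which the claimed $\preceq$ is immediate. Write $\Sigma_p := \lambda\I + \sum_{i=1}^m p_i\a_i\a_i^\top$ and let $b_1,\dots,b_m$ be independent Bernoulli variables with $\E[b_i]=p_i$, so that $\sum_{i\in S}\a_i\a_i^\top = \sum_i b_i\a_i\a_i^\top$ when $b$ is the indicator of $S$. The workhorse is an elementary \emph{multilinearity lemma}: for any square matrix $\X$ and vectors $\w_1,\dots,\w_m$ of matching dimension, the map $b\mapsto\det(\X+\sum_i b_i\w_i\w_i^\top)$ is a multilinear polynomial in $b$, hence $\E_b[\det(\X+\sum_i b_i\w_i\w_i^\top)] = \det(\X+\sum_i p_i\w_i\w_i^\top)$. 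I would prove this first for invertible $\X$ via the matrix determinant lemma and Sylvester's identity, $\det(\X+\mathbf{W}^\top\D_b\mathbf{W}) = \det(\X)\det(\I+\D_b\mathbf{W}\X^{-1}\mathbf{W}^\top)$ with $\mathbf{W}$ the matrix whose rows are the $\w_i^\top$ and $\D_b=\diag(b)$, then expand $\det(\I+\D_b\G) = \sum_{S}(\prod_{i\in S}b_i)\det(\G_{S,S})$ for $\G := \mathbf{W}\X^{-1}\mathbf{W}^\top$ to read off multilinearity; the general (possibly singular) case follows by continuity.

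Given this lemma, the rest is short. Taking $\X=\lambda\I$ and $\w_i=\a_i$ shows $\E_b[\det(\lambda\I+\sum_i b_i\a_i\a_i^\top)] = \det\Sigma_p$, which confirms that the R-DPP weights sum to one (so the claim is well posed) and, more usefully, lets the determinant in the R-DPP weight cancel against Cramer's rule $\M^{-1}=\det(\M)^{-1}\mathrm{adj}(\M)$:
\[
\E\big[(\A_S^\top\A_S+\lambda\I)^{-1}\big] = \frac{1}{\det\Sigma_p}\,\E_b\Big[\mathrm{adj}\Big(\lambda\I + \sum_i b_i\a_i\a_i^\top\Big)\Big].
\]
It then remains to show $\E_b[\mathrm{adj}(\lambda\I+\sum_i b_i\a_i\a_i^\top)] = \mathrm{adj}(\Sigma_p)$. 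For arbitrary $\u,\vv\in\R^n$ I would linearize the adjugate with the bordered-determinant identity $\vv^\top\mathrm{adj}(\M)\u = -\det\begin{pmatrix}\M & \u\\ \vv^\top & 0\end{pmatrix}$: with $\M=\lambda\I+\sum_i b_i\a_i\a_i^\top$, the bordered matrix equals $\X_{\u,\vv} + \sum_i b_i\tilde\a_i\tilde\a_i^\top$ where $\X_{\u,\vv}=\begin{pmatrix}\lambda\I & \u\\ \vv^\top & 0\end{pmatrix}$ and $\tilde\a_i = (\a_i;0)\in\R^{n+1}$, so the $b_i$-dependence is still purely through rank-one updates. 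Applying the multilinearity lemma to this $(n{+}1)\times(n{+}1)$ matrix gives $\E_b[\vv^\top\mathrm{adj}(\M)\u] = -\det\begin{pmatrix}\Sigma_p & \u\\ \vv^\top & 0\end{pmatrix} = \vv^\top\mathrm{adj}(\Sigma_p)\u$ for every $\u,\vv$, hence the matrix identity, and therefore $\E[(\A_S^\top\A_S+\lambda\I)^{-1}] = \mathrm{adj}(\Sigma_p)/\det\Sigma_p = \Sigma_p^{-1}$.

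The main obstacle is the multilinearity lemma together with its application to the bordered matrix: one must be careful that it is invoked with a general, non-symmetric and possibly singular matrix $\X=\X_{\u,\vv}$ (handled by the invertible-case-plus-continuity argument), and that the border is placed so that each $b_i$ still enters only via the single rank-one term $b_i\tilde\a_i\tilde\a_i^\top$. Once those two points are set up correctly, Cramer's rule, the cancellation of the normalizing determinant, and extracting the stated positive semidefinite inequality from the equality are all routine.
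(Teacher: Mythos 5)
Your proposal is correct, and there is nothing in the paper to compare it against in detail: the paper does not prove this statement, it simply imports it as Lemma~11 of the cited work \cite{derezinski2020bayesian}. Your argument is a valid self-contained derivation, and it in fact establishes the stronger claim that equality holds, $\E[(\A_S^\top\A_S+\lambda\I)^{-1}] = (\sum_i p_i\a_i\a_i^\top+\lambda\I)^{-1}$, which is consistent with the design of regularized DPPs (the determinantal weight in the R-DPP probability exactly cancels the inverse). The two points you flag as delicate are indeed the right ones and are handled correctly: the multilinearity of $b\mapsto\det(\X+\sum_i b_i\w_i\w_i^\top)$ holds for arbitrary, possibly singular and non-symmetric $\X$ (each $b_i$ enters through a rank-one update, so the determinant is affine in each $b_i$; your invertible-case-plus-continuity argument, or direct column multilinearity, both work), and the bordered matrix is arranged so that the $b_i$-dependence is still through the symmetric rank-one terms $b_i\tilde\a_i\tilde\a_i^\top$, so the same lemma applies entrywise to the adjugate via $\vv^\top\mathrm{adj}(\M)\u = -\det\begin{pmatrix}\M & \u\\ \vv^\top & 0\end{pmatrix}$. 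Note also that $\lambda>0$ (required in the paper's Definition of $\RDPP$) guarantees the invertibility needed for Cramer's rule and for the normalization $\det(\sum_i p_i\a_i\a_i^\top+\lambda\I)$ to be positive; your use of the multilinearity lemma to verify that the R-DPP weights sum to one is the same observation. This "determinantal averaging" route (expectation of determinant-times-adjugate under independent Bernoulli sampling) is essentially the mechanism behind the cited result, so your proof can stand in for the external reference.
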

By applying Lemma~\ref{lem:dlm11} to $\setS_{\DPP} \sim \RDPP_p(\A, \frac{k}{m}\bar{\lambda})$ where $p = (\frac{k}{m}, \ldots, \frac{k}{m})$ we have 
\begin{align*}
\E\left[\left(\I + \frac{m}{k\bar{\lambda}}\A_{\setS_{\DPP}}^\top\A_{\setS_{\DPP}} \right)^{-1}\right] \preceq \frac{k\bar{\lambda}}{m} \left(\sum_i p_i \a_i\a_i^\top + \frac{k\bar{\lambda}}{m}\I \right)^{-1} = \bar{\lambda} \left(\A^\top \A + \bar{\lambda}\I\right)^{-1}.
\end{align*}
\end{proof}

\section{Over-determined Systems: Proof of Lemma~\ref{l:nu_misc}}
We first state two lemmas which will be used in the proof. Here, the first lemma is adapted from Theorem 7.2.1 in \cite{tropp2015matrix}.

\begin{lemma}[Matrix Chernoff]
\label{l:chernoff}
Let $\Z_1, \ldots, \Z_s$ be independent random $n \times n$ psd matrices  with $\mu_{\max} \coloneqq \lambda_{\max}(\sum_t \E[\Z_t])$. Suppose that $\max_t \lambda_{\max}(\Z_t) \leq R$. Then, for any $\epsilon\geq 0$ we have
\begin{align*}
\Pr\left\{\lambda_{\max} \left(\sum_{t=1}^s \Z_t\right) \geq (1+\epsilon) \mu_{\max} \right\} \leq n \cdot \exp\left(- \frac{\epsilon^2 \mu_{\max}}{(2+\epsilon) R}\right).
\end{align*}
\end{lemma}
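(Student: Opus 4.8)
The plan is to prove this via the matrix Laplace transform (Chernoff) method, reproducing the argument underlying Theorem 7.2.1 of \cite{tropp2015matrix} and then carrying out the scalar optimization to reach the stated form. Write $\Y=\sum_{t=1}^s\Z_t$, which is psd. For any $\theta>0$, monotonicity of $r\mapsto e^{\theta r}$ gives $\{\lambda_{\max}(\Y)\ge a\}=\{\lambda_{\max}(e^{\theta\Y})\ge e^{\theta a}\}$, and since $\lambda_{\max}(e^{\theta\Y})\le\tr e^{\theta\Y}$ for a psd matrix, Markov's inequality yields $\Pr\{\lambda_{\max}(\Y)\ge a\}\le e^{-\theta a}\,\E[\tr e^{\theta\Y}]$, which we will eventually evaluate at $a=(1+\epsilon)\mu_{\max}$.

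The one genuinely nontrivial ingredient is the subadditivity of matrix cumulant generating functions: for independent Hermitian matrices, $\E[\tr\exp(\theta\sum_t\Z_t)]\le\tr\exp(\sum_t\log\E[e^{\theta\Z_t}])$. This is a consequence of Lieb's concavity theorem, and I would simply cite \cite{tropp2015matrix} for it rather than re-derive it; everything else is transfer of scalar inequalities plus one-variable calculus. Next I bound each term: on $[0,R]$ convexity gives $e^{\theta z}\le 1+\frac{e^{\theta R}-1}{R}z$, so by the transfer rule applied to the eigenvalues of $\Z_t$ (which lie in $[0,R]$) we get $e^{\theta\Z_t}\preceq\I+\frac{e^{\theta R}-1}{R}\Z_t$; taking expectations and using $\log(\I+\A)\preceq\A$ (from $\log(1+x)\le x$ and operator monotonicity of $\log$), $\log\E[e^{\theta\Z_t}]\preceq\frac{e^{\theta R}-1}{R}\E[\Z_t]$. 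Summing over $t$, using $\mu_{\max}=\lambda_{\max}(\sum_t\E[\Z_t])$ and monotonicity of $\tr\exp$ under $\preceq$, gives $\E[\tr e^{\theta\Y}]\le n\exp(\frac{e^{\theta R}-1}{R}\mu_{\max})$.

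Combining the two bounds, $\Pr\{\lambda_{\max}(\Y)\ge(1+\epsilon)\mu_{\max}\}\le n\exp(\frac{\mu_{\max}}{R}[\,e^{\theta R}-1-(1+\epsilon)\theta R\,])$. Minimizing the exponent over $\theta>0$ is elementary: the optimum is at $\theta R=\log(1+\epsilon)$, giving the sharp bound $n\exp(\frac{\mu_{\max}}{R}[\epsilon-(1+\epsilon)\log(1+\epsilon)])$. Finally, the elementary estimate $\log(1+\epsilon)\ge\frac{2\epsilon}{2+\epsilon}$ for $\epsilon\ge0$ (which reduces, after clearing denominators and differentiating, to $\epsilon^2\ge0$) implies $\epsilon-(1+\epsilon)\log(1+\epsilon)\le-\frac{\epsilon^2}{2+\epsilon}$, producing the claimed bound $n\exp(-\frac{\epsilon^2\mu_{\max}}{(2+\epsilon)R})$. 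The main obstacle, as noted, is the matrix-MGF subadditivity step; since the lemma is explicitly billed as adapted from \cite{tropp2015matrix}, I would invoke that monograph for it and keep the write-up focused on the reduction, the per-term operator bound, and the scalar optimization.
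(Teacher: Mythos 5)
Your proposal is correct, and it is essentially the argument behind the result the paper invokes: the paper gives no proof of Lemma~\ref{l:chernoff}, citing it as an adaptation of Theorem 7.2.1 in \cite{tropp2015matrix}, whose proof is exactly your Laplace-transform-plus-Lieb-subadditivity route. The only content beyond the citation is the weakening of the sharp exponent $\epsilon-(1+\epsilon)\log(1+\epsilon)$ to $-\tfrac{\epsilon^2}{2+\epsilon}$ via $\log(1+\epsilon)\geq\tfrac{2\epsilon}{2+\epsilon}$, which you carry out correctly, so nothing is missing.
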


\begin{lemma}[Lemma 6 in \cite{derezinski2024fine}, Lemma 3.3 in \cite{tropp2011improved}]\label{l:ls_uniform}
Suppose matrix $\U\in\R^{m \times r}$ such that $\U^\top\U = \I$ is transformed by RHT. Then, $\tilde{\U}=\Q\U$ with probability $1-\delta$ has nearly uniform leverage scores, i.e., the norm of the rows satisfies $\|\tilde{\u}_i\| \leq \sqrt{r/m} + \sqrt{8 \log(m/\delta) /m}$ for all $i \in [m]$.
\end{lemma}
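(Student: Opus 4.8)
The plan is to recognize this as the standard fact that a random $\pm1$ sign transform flattens leverage scores, and to prove it by a one-row concentration argument combined with a union bound over the $m$ rows. First I would rewrite the $i$-th row of $\tilde{\U}=\Q\U$ as $\tilde{\u}_i=\U^\top\Q^\top\e_i$, and note that $\Q^\top\e_i=\D\H^\top\e_i$ has $j$-th entry $\tfrac{d_j}{\sqrt m}H_{ij}$; since the $d_j$ are independent Rademacher variables and $H_{ij}\in\{\pm1\}$ are fixed, the vector $\Q^\top\e_i$ is distributed exactly as $\tfrac1{\sqrt m}\z$ for a uniformly random sign vector $\z\in\{\pm1\}^m$. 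Hence it suffices to bound $\|\U^\top\z\|$ for such a $\z$, for each fixed $i$. (The vectors $\{\Q^\top\e_i\}_{i\in[m]}$ are not mutually independent, but only the marginal law of each enters, so a union bound is legitimate.)

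Next I would control the relevant moments and the regularity of the map $\z\mapsto\|\U^\top\z\|$. Using $\U^\top\U=\I$ and $\E[\z\z^\top]=\I_m$ gives $\E\|\U^\top\z\|^2=\tr(\U\U^\top\,\E[\z\z^\top])=\tr(\U^\top\U)=r$, so by Jensen $\E\|\U^\top\z\|\le\sqrt r$. Moreover $\z\mapsto\|\U^\top\z\|$ is convex (a norm composed with a linear map) and $1$-Lipschitz for the Euclidean metric on $\{\pm1\}^m$, since $\big|\,\|\U^\top\z\|-\|\U^\top\z'\|\,\big|\le\|\U^\top(\z-\z')\|\le\|\U^\top\|_{2\to2}\|\z-\z'\|=\|\z-\z'\|$, because every singular value of $\U$ equals $1$. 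The important feature is that the Lipschitz constant is governed by the operator norm of $\U^\top$, not by $\sum_j\|\u_j\|^2$ (where $\u_j^\top$ denotes the $j$-th row of $\U$), so the resulting deviation term is dimension-free.

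Finally I would invoke the concentration inequality for convex Lipschitz functions of independent bounded variables (Talagrand's convex-distance inequality / convex concentration), in the form $\Pr\{f(\z)\ge\E f(\z)+t\}\le\exp(-t^2/8)$ for a convex $1$-Lipschitz $f$ and $\z$ uniform on $\{\pm1\}^m$. Applied to $f(\z)=\|\U^\top\z\|$ this yields $\Pr\{\|\U^\top\z\|\ge\sqrt r+t\}\le\exp(-t^2/8)$; taking $t=\sqrt{8\log(m/\delta)}$ makes the right-hand side at most $\delta/m$. Dividing through by $\sqrt m$ (recall $\tilde{\u}_i$ has the law of $\tfrac1{\sqrt m}\U^\top\z$) and union-bounding over $i\in[m]$ gives $\|\tilde{\u}_i\|\le\sqrt{r/m}+\sqrt{8\log(m/\delta)/m}$ simultaneously for all $i$ with probability $1-\delta$, which is the claim. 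The only delicate choice is that of the concentration inequality: a crude bounded-differences (McDiarmid) bound would use per-coordinate increments of size $\tfrac2{\sqrt m}\|\u_j\|$ and hence drag a spurious $\sqrt r$ into the logarithmic term, so the convex-Lipschitz version is essential; the explicit constant $8$ in the statement merely reflects the particular form of that inequality, and any absolute constant would do for the applications in the paper.
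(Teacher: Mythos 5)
Your argument is correct and is essentially the standard proof of this fact: the paper itself does not prove Lemma~\ref{l:ls_uniform} but imports it from Lemma 3.3 of \cite{tropp2011improved} (restated as Lemma 6 in \cite{derezinski2024fine}), and that source proceeds exactly as you do — reduce row $i$ to $\frac{1}{\sqrt m}\|\U^\top\z\|$ for a Rademacher vector $\z$, bound the mean by $\sqrt r$ via $\tr(\U^\top\U)=r$, apply the concentration inequality for convex $1$-Lipschitz functions of sign vectors (which is where the constant $8$ comes from), and union bound over the $m$ rows. Your remark that a bounded-differences bound would be too weak and that convexity/operator-norm Lipschitzness is the essential ingredient is exactly the right observation.
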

\begin{proof}[Proof of Lemma~\ref{l:nu_misc}]
For any $t \in [s]$, denote $\Z_t \coloneqq \frac{1}{p_{i_t}} (\A^\top\A)^{\dagger/2} \a_{i_t} \a_{i_t}^\top (\A^\top\A)^{\dagger/2}$, where $p_{i_t} = \frac{1}{m}$ is the uniform sampling probability, and $\a_{i_t}$ is the $i_t$-th row of $\A$. Then, we have
\begin{align*}
\left\|\frac{1}{m}\sum_{t=1}^s \Z_t\right\| = \left\|(\A^\top\A)^{\dagger/2} \A_S^\top \A_S (\A^\top\A)^{\dagger/2}\right\| =
\left\|\A_S(\A^\top\A)^\dagger \A_S^\top \right\|.
\end{align*}
Let $\mu_{\max} \coloneqq \lambda_{\max}(\sum_t \E[\Z_t])$.
Notice that for each $t \in [s]$, since $\E[\Z_t] = (\A^\top\A)^{\dagger/2} \cdot \E[\frac{\a_{i_t} \a_{i_t}^\top}{p_{i_t}}] \cdot (\A^\top \A)^{\dagger/2} = (\A^\top\A)^{\dagger/2} (\A^\top\A) (\A^\top \A)^{\dagger/2}$, we have $\mu_{\max} = s\cdot \lambda_{\max}(\E[\Z_t]) = s$.
By applying matrix Chernoff bound (Lemma~\ref{l:chernoff}) to $\{\Z_t\}_{t=1}^s$ we have
\begin{align*}
\Pr\left\{ \left\|\sum_{t=1}^s \Z_t\right\| \geq (1+\epsilon)s \right\} \leq n \cdot \exp\left(-\frac{\epsilon^2 s}{(2+\epsilon)R} \right),
\end{align*}
where, $R>0$ is a parameter that satisfies
\begin{align*}
\max_t \lambda_{\max}(\Z_t) \leq \max_i \frac{1}{p_i} \left\|(\A^\top\A)^{\dagger/2} \a_i\a_i^\top (\A^\top\A)^{\dagger/2} \right\| = m \cdot \max_i \|\a_i\|_{(\A^\top\A)^\dagger}^2 \leq R.
\end{align*}
Let $\A = \U \Sig\V^\top$ be the thin SVD of $\A$ where $\U \in\R^{m \times r}$ satisfies $\U^\top\U = \I$, then we have
\begin{align*}
\|\a_i\|_{(\A^\top\A)^\dagger}^2 = (\u_i^\top\Sig\V^\top) (\V\Sig^{-2}\V^\top) (\V\Sig\u_i) = \u_i^\top \u_i = \|\u_i\|^2
\end{align*}
where $\u_i^\top$ is the $i$-th row of $\U$. Since we assume that $\A$ is transformed by RHT, so is matrix $\U$. According to Lemma~\ref{l:ls_uniform} we have 
\begin{align*}
\max_i \|\u_i\|^2 \leq \left(\sqrt{\frac{r}{m}} + \sqrt{\frac{8\log(m/\delta)}{m}}\right)^2 \leq \frac{2r}{m} + \frac{16 \log(m/\delta)}{m}
\end{align*}
holds with probability $1-\delta$. Conditioned on this event, we let $R = 2r+ 16\log(m/\delta) \geq m \cdot \max_i \|\u_i\|^2$. Given $0<\delta'<1$, by choosing $\epsilon = \frac{2R}{s}\log(n/\delta')-1 > \frac{4r}{s} -1 > 3$ we have
\begin{align*}
& ~ \Pr\left\{ \left\|\A_S(\A^\top\A)^\dagger \A_S^\top \right\| \geq \frac{4r + 32 \log(m/\delta)}{m} \cdot \log(n/\delta')\right\} \\
= & ~ \Pr\left\{ \left\|\sum_{t=1}^s \Z_t\right\| \geq \left(4r + 32 \log(m/\delta) \right) \cdot \log(n/\delta')\right\} \\
\leq & ~ n \cdot \exp\left(-\frac{\epsilon^2 s}{(2+\epsilon)R} \right) \leq n \cdot \exp\left(-\frac{3\epsilon s}{5R} \right) \leq n \cdot \exp\left(-\frac{5R \log(n/\delta')}{5R} \right) = \delta'.
\end{align*}
Thus we conclude that conditioned on an event that happens with probability $1-\delta$ (and only depends on RHT), with probability $1-\delta'$ we have $\left\|\A_S(\A^\top\A)^\dagger \A_S^\top \right\| \leq \frac{4r + 32 \log(m/\delta)}{m} \cdot \log(n/\delta') = \alpha$. Finally, this implies that $\|(\A^\top\A)^{\dagger/2} \A_S^\top\A_S (\A^\top\A)^{\dagger/2}\| \leq \alpha$, which gives that
\begin{align*}
\A_S^\top\A_S \preceq \alpha \A^\top\A = \frac{4r + 32 \log(m/\delta)}{m} \log(n/\delta') \cdot \A^\top\A.
\end{align*}
\end{proof}

\section{Block Memoization: Proofs of Theorem \ref{thm:block_memo} and Corollary~\ref{cor:block_memo}}\label{s:block_memo-proof}
To prove Theorem \ref{thm:block_memo}, we rely on the following Bernstein's inequality for the concentration of symmetric random matrices, adapted from \cite{tropp2015matrix}.

\begin{lemma}[Matrix Bernstein]\label{l:bernstein}
Let $\Z_1, \ldots, \Z_B$ be independent random symmetric $n \times n$ matrices such that $\frac{1}{B}\sum_j \E[\Z_j] = \bar{\Z}$ and $\|\frac{1}{B} \sum_j \E[(\Z_j - \bar{\Z})^2]\| \leq \sigma^2$. Suppose that $\|\Z_j - \E[\Z_j]\| \leq R$ holds for all $j \in [B]$. Then for any $\epsilon \geq 0$,
\begin{align*}
\Pr\bigg(\big\|\frac{1}{B}\sum_{j=1}^B \Z_j - \bar{\Z}\big\| \geq \epsilon \bigg) \leq 2n \cdot \exp\left(-\frac{\epsilon^2 B / 2}{\sigma^2 + \epsilon R / 3}\right).
\end{align*}
\end{lemma}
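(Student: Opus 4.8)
The plan is to obtain this statement as a corollary of the standard matrix Bernstein inequality for sums of independent, centered, symmetric random matrices, as in \cite[Thm.~6.1.1]{tropp2015matrix}: if $\X_1,\dots,\X_B$ are independent symmetric $n\times n$ matrices with $\E[\X_j]=\mathbf{0}$, $\|\X_j\|\leq L$ almost surely, and $v:=\big\|\sum_{j}\E[\X_j^2]\big\|$, then for all $t\geq 0$,
\[
\Pr\Big(\big\|\textstyle\sum_{j}\X_j\big\|\geq t\Big)\ \leq\ 2n\exp\!\Big(\!-\tfrac{t^2/2}{v+Lt/3}\Big).
\]
First I would reduce the lemma to this form by centering and rescaling: set $\X_j:=\tfrac1B(\Z_j-\E[\Z_j])$. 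The $\X_j$ are then independent, symmetric, and mean zero, with $\sum_{j}\X_j=\tfrac1B\sum_{j}\Z_j-\bar\Z$, so the event in the lemma coincides with the one above for $t=\epsilon$. The almost-sure bound transfers immediately: $\|\X_j\|=\tfrac1B\|\Z_j-\E[\Z_j]\|\leq R/B=:L$.

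The only nonroutine step is matching the variance proxy. We have $v=\tfrac1{B^2}\big\|\sum_{j}\E[(\Z_j-\E[\Z_j])^2]\big\|$, whereas the hypothesis controls $\big\|\tfrac1B\sum_{j}\E[(\Z_j-\bar\Z)^2]\big\|\leq\sigma^2$. I would bridge the two via a bias--variance decomposition in the positive semidefinite order: writing $\Z_j-\bar\Z=(\Z_j-\E[\Z_j])+(\E[\Z_j]-\bar\Z)$, squaring, and taking expectations, the two cross terms vanish because $\E[\Z_j-\E[\Z_j]]=\mathbf{0}$, so
\[
\E\big[(\Z_j-\bar\Z)^2\big]=\E\big[(\Z_j-\E[\Z_j])^2\big]+(\E[\Z_j]-\bar\Z)^2\ \succeq\ \E\big[(\Z_j-\E[\Z_j])^2\big].
\]
Summing over $j$ and using monotonicity of the spectral norm on positive semidefinite matrices gives $\big\|\sum_{j}\E[(\Z_j-\E[\Z_j])^2]\big\|\leq\big\|\sum_{j}\E[(\Z_j-\bar\Z)^2]\big\|\leq B\sigma^2$, hence $v\leq\sigma^2/B$.

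Finally I would plug $t=\epsilon$, $L=R/B$, and $v\leq\sigma^2/B$ into the standard bound, then multiply the numerator and denominator of the exponent by $B$ to clear the $1/B$ factors, which yields precisely
\[
\Pr\Big(\big\|\tfrac1B\textstyle\sum_{j=1}^B\Z_j-\bar\Z\big\|\geq\epsilon\Big)\ \leq\ 2n\exp\!\Big(\!-\tfrac{\epsilon^2 B/2}{\sigma^2+\epsilon R/3}\Big).
\]
I do not expect a genuine obstacle here: the only subtlety is the bias--variance bookkeeping above, which is moreover vacuous in the way the lemma is actually applied in Corollary~\ref{cor:block_memo} (there the $\Z_j$ are i.i.d., so $\E[\Z_j]=\bar\Z$ for every $j$). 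If a fully self-contained argument were wanted instead of invoking \cite{tropp2015matrix}, the fallback would be the matrix Laplace-transform method: bound $\Pr(\lambda_{\max}(\sum_{j}\X_j)\geq\epsilon)\leq n\inf_{\theta>0}\exp\!\big(-\theta\epsilon+(e^{\theta L}-\theta L-1)L^{-2}v\big)$ using Lieb's concavity theorem and subadditivity of the matrix cumulant generating function for independent summands, optimize over $\theta$ to extract the Bernstein-type exponent, and apply the same estimate to $-\sum_{j}\X_j$ with a union bound over the two tails.
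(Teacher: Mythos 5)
Your proposal is correct and follows the same route the paper intends: the paper states this lemma as an adaptation of the matrix Bernstein inequality of \cite{tropp2015matrix} without further proof, and your reduction via centering, rescaling by $1/B$, and the bias--variance comparison $\E[(\Z_j-\bar\Z)^2]\succeq\E[(\Z_j-\E[\Z_j])^2]$ supplies exactly the bookkeeping that the citation leaves implicit. Your observation that this last step is vacuous in the paper's actual application (i.i.d.\ $\Z_j$, so $\E[\Z_j]=\bar\Z$) is also accurate.
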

\begin{proof}[Proof of Theorem~\ref{thm:block_memo}]
Let $\Z_j \coloneqq \bar{\P}_{\reg}^{\dagger/2}\P_{\reg, S_j}\bar{\P}_{\reg}^{\dagger/2}$ be a normalized form of the regularized projection matrix $\P_{\reg, S_j}$ associated with subset $S_j$ from $\mathcal{B}$. Then we have $\bar{\Z} \coloneqq \E_{S_j \sim \mathcal{D}}[\Z_j] =  \bar{\P}_{\reg}^{\dagger/2}\bar{\P}_{\reg} \bar{\P}_{\reg}^{\dagger/2}$. Denote $\mathcal{E}_j$ as the event that $\A_{S_j}^\top \A_{S_j} \preceq \alpha \A^\top\A$, and let $\widetilde{\Z}_j \coloneqq \Z_j \cdot \mathbf{1}_{\mathcal{E}_j}$. Then, $\{\widetilde{\Z}_j\}_{j=1}^B$ are a series of independent PSD matrices. We have\footnote{For convenience we shorten the subscript $S_j \sim \mathcal{D}$ when taking expectation.}
\begin{align}\label{eq:tilde_Z}
\E[\widetilde{\Z}_j] 
= (1-\delta')\cdot \E[\Z_j \mid \mathcal{E}_j] \preceq \E[\Z_j]
\end{align}
where $\delta' = \Pr(\mathcal{E}_j)$ and in the last step we use the fact that $\Z_j \succeq \mathbf{0}$. In order to apply matrix Bernstein, we need to bound two terms $\|\widetilde{\Z}_j - \E[\widetilde{\Z}_j]\|$ and $\|\E[(\widetilde{\Z}_j - \E[\widetilde{\Z}_j])^2]\|$. For the first term, notice that according to our assumption we have $\bar\P_{\reg}^\dagger \preceq \frac{1}{c}(\A^\top\A + \bar{\lambda}\I)(\A^\top\A)^\dagger \preceq \frac{1}{c}(\I+\bar{\lambda}(\A^\top\A)^\dagger)$, which gives
\begin{align*}
\left\|\widetilde{\Z}_j \right\| = & ~ \left\|\bar\P_{\reg}^{\dagger/2} \P_{\reg, S_j} \bar\P_{\reg}^{\dagger/2} \cdot \mathbf{1}_{\mathcal{E}_j} \right\| = \left\|\P_{\reg, S_j}^{1/2} \bar\P_{\reg}^\dagger \P_{\reg, S_j}^{1/2} \cdot \mathbf{1}_{\mathcal{E}_j}\right\| \\
\leq & ~ \frac{1}{c}\left\|\P_{\reg, S_j}^{1/2}\left(\I + \bar\lambda(\A^\top\A)^\dagger\right)\P_{\reg, S_j}^{1/2} \cdot \mathbf{1}_{\mathcal{E}_j}\right\| \\
\leq & ~ \frac{1}{c}\left\|\P_{\reg, S_j}\right\| + \frac{\bar\lambda}{c} \left\|\P_{\reg, S_j}^{1/2}(\A^\top\A)^\dagger\P_{\reg, S_j}^{1/2} \cdot \mathbf{1}_{\mathcal{E}_j}\right\| \\
\leq & ~ \frac{1}{c} + \frac{\bar\lambda}{c} \left\|(\A^\top\A)^{\dagger/2}\P_{\reg, S_j}(\A^\top\A)^{\dagger/2} \cdot \mathbf{1}_{\mathcal{E}_j}\right\|.
\end{align*}
Notice that we can expand the last term as follows:
\begin{align*}
& ~ \left\|(\A^\top\A)^{\dagger/2}\P_{\reg, S_j}(\A^\top\A)^{\dagger/2} \cdot \mathbf{1}_{\mathcal{E}_j}\right\| \\
= & ~ \left\|(\A^\top\A)^{\dagger/2}\A_S^\top(\A_S\A_S^\top+\lambda\I)^{-1}\A_S(\A^\top\A)^{\dagger/2} \cdot \mathbf{1}_{\mathcal{E}_j}\right\| \\
= & ~ \left\|(\A_S\A_S^\top+\lambda\I)^{-1/2}\A_S(\A^\top\A)^\dagger\A_S^\top(\A_S\A_S^\top+\lambda\I)^{-1/2} \cdot \mathbf{1}_{\mathcal{E}_j}\right\|.
\end{align*}
If $\mathcal{E}_j$ happens, then by definition $\|\A_S(\A^\top\A)^\dagger \A_S^\top\| = \|(\A^\top\A)^{\dagger/2} \A_S^\top\A_S (\A^\top\A)^{\dagger/2}\| \leq \alpha$, and the above quantity can be bounded by $\alpha \left\|(\A_S\A_S^\top+\lambda\I)^{-1}\right\| \leq \frac{\alpha}{\lambda}$; otherwise it equals to $0$. This gives $\|\widetilde{\Z}_j\| \leq \frac{1}{c}+\frac{\alpha\bar\lambda}{c\lambda}$. We can similarly bound $\|\Z_j\|$ (which will be used later), since the only difference is the $\mathbf{1}_{\mathcal{E}_j}$ term: by using $\|\A_S(\A^\top\A)^\dagger \A_S^\top\| \leq 1$, we have $\|\Z_j\| \leq \frac{1}{c} + \frac{\bar\lambda}{c \lambda}$. Combining this with \eqref{eq:tilde_Z} we obtain the $R$ term in Bernstein's inequality:
\begin{align*}
\big\|\widetilde{\Z}_j - \E[\widetilde{\Z}_j] \big\| \leq & ~ \big\|\widetilde{\Z}_j\big\| + \big\|\E[\widetilde{\Z}_j]\big\| 
\leq \big\|\widetilde{\Z}_j\big\| + \big\|\bar{\Z}\big\| \\
= & ~ \big\|\widetilde{\Z}_j\big\| + \big\|\bar{\P}_{\reg}^{\dagger/2}\bar{\P}_{\reg} \bar{\P}_{\reg}^{\dagger/2}\big\| \leq \frac{1}{c}\big(1+\alpha\frac{\bar\lambda}{\lambda}\big)+1 =: R.
\end{align*}
To obtain the $\sigma^2$ term, since $\E[\widetilde{\Z}_j] \preceq \E[\Z_j] \preceq \I$, observe that:
\begin{align*}
& ~ \big\|\E\big[(\widetilde{\Z}_j - \E[\widetilde{\Z}_j])^2 \big] \big\| = \big\|\E\big[\widetilde{\Z}_j^2\big] - \E[\widetilde{\Z}_j]^2 \big\| \leq \big\|\E\big[\widetilde{\Z}_j^2\big]\big\| + \big\|\E[\widetilde{\Z}_j]^2 \big\| \leq \big\|\E\big[\widetilde{\Z}_j^2\big] \big\| + 1 \\
& ~ \leq \big\|\E[(\bar{\P}_{\reg}^{\dagger/2} \P_{\reg, S_j} \bar{\P}_{\reg}^{\dagger/2})^2] \big\| + 1 \leq  \frac{1}{c}\Big(1 + \frac{\bar\lambda}{\lambda}\big(\alpha + \delta'\|\bar\P_\lambda^\dagger\|\big)\Big) + 1 =: \sigma^2
\end{align*}
where the last step follows from Theorem~\ref{l:nu-reg}.
Applying Bernstein (Lemma~\ref{l:bernstein}) to matrices $\{\widetilde{\Z}_j\}_{j=1}^B$ with parameters $\sigma^2$ and $R$ computed above, by setting $\epsilon=\frac{1}{3}$ and $B \geq \frac{20}{c}(c+1 + \frac{\bar\lambda}{\lambda}(\alpha+\frac{9}{10}\delta' \|\bar{\P}_{\lambda}^\dagger\|))\cdot\log(2n/\delta)$ we have
\begin{align*}
\Pr\bigg(\Big\|\frac{1}{B}\sum_{j=1}^B \widetilde{\Z}_j - \E[\widetilde{\Z}_j]\Big\| \geq \frac{1}{3} \bigg) \leq 2n \cdot \exp\bigg(-\frac{B / 18}{\sigma^2 + R/9}\bigg) \leq \delta.
\end{align*}
Taking a union bound, with probability $1-B\delta' - \delta$ we have the following:
\begin{align*}
\bigg\|\frac{1}{B}\sum_{j=1}^B \Z_j - \bar{\Z} \bigg\| \leq & ~ \bigg\|\frac{1}{B}\sum_{j=1}^B \big(\Z_j - \widetilde{\Z}_j\big) \bigg\| + \bigg\|\frac{1}{B}\sum_{j=1}^B \widetilde{\Z}_j - \E[\widetilde{\Z}_j] \bigg\| + \bigg\|\E[\widetilde{\Z}_j] - \bar{\Z} \bigg\| \\
\leq & ~ 0 + \frac{1}{3} + \delta' \cdot \left\| \Z_j\right\| \leq \frac{1}{3} + \delta' \cdot\left(\frac{1}{c}+\frac{\bar\lambda}{c\lambda}\right) \leq \frac{1}{2}
\end{align*}
where the last step follows by taking $\delta' \leq \frac{c\lambda}{12\bar\lambda}$. Notice that since $0 < c \leq 1$, by further assuming that $\delta' \leq \alpha / \|\bar{\P}_{\lambda}^\dagger\|$, the requirement on $B$ becomes $B \geq \frac{40}{c}(1 + \alpha\frac{\bar\lambda}{\lambda})\cdot\log(2n/\delta)$.
So, the average $\hat\Z :=\frac1B\sum_{j=1}^B\Z_j$ satisfies $\hat\Z\succeq \bar\Z-\frac12\I$ with probability $1-B\delta'-\delta$. Note that $\bar\Z$ is a projection onto a subspace that contains the range of $\hat\Z$, applying $\bar\Z$ on both sides of that inequality we get $\hat\Z = \bar\Z\hat\Z\bar\Z\succeq \bar\Z\bar\Z\bar\Z - \frac12\bar\Z\bar\Z = \frac12\bar\Z$. This gives
\begin{align*}
\frac{1}{B}\sum_{j=1}^B \P_{\reg, S_j} \succeq \bar{\P}_{\reg}^{1/2}\hat\Z\bar{\P}_{\reg}^{1/2} \succeq 
\frac12\bar{\P}_{\reg}^{1/2}\bar\Z\bar{\P}_{\reg}^{1/2}
=\frac{1}{2} \bar{\P}_{\reg},
\end{align*}
which concludes the proof.
\end{proof}

\begin{proof}[Proof of Corollary~\ref{cor:block_memo}]
By Theorem~\ref{l:mu-reg}, under these assumptions we have
\begin{align*}
\|\bar{\P}_{\lambda}^\dagger\| = \frac{1}{\lambda_{\min}(\bar\P_{\lambda})} \leq \frac{2r \bar{\kappa}_k^2}{k}.
\end{align*}
Thus according to Lemma~\ref{l:nu_misc} with choice $\delta' = \frac{k}{2m\bar{\kappa}_k^2} \leq \frac{r}{m\|\bar{\P}_{\lambda}^\dagger\|}$, we have
\begin{align*}
\alpha \leq \frac{c'r \log(n/\delta')}{2m} \leq \frac{c'r \log(mn\bar{\kappa}_k)}{m}
\end{align*}
where $c' = 8+\frac{64}{C} \leq 9$ from the assumption that $C\log(m/\delta)< r$ for some $C\geq 64$. We can also easily ensure that $\alpha\leq 1$.
By plugging these bounds in Theorem~\ref{thm:block_memo} and noticing that $c=\frac{1}{2}$, we obtain the desired requirement on $B$.
\end{proof}

\section{Analysis of SymFHT: Proof of Theorem \ref{t:symFHT}}
\label{s:symfht}
Before we analyze the proposed SymFHT algorithm, we first define the Hadamard matrix and state the standard FHT (Fast Hadamard Transform) algorithm as follows.
\begin{definition}[Hadamard matrix]
For $n = 2^m$, we define Hadamard matrix as: $\H_1 = 1$, and for $m\geq 1$, 
\begin{align*}
\H_n = \begin{bmatrix}
\H_{n/2} & \H_{n/2} \\
\H_{n/2} & -\H_{n/2}
\end{bmatrix} \in \R^{n \times n}.
\end{align*}
\end{definition}

\begin{algorithm}[!ht]
\caption{Fast Hadamard Transform (FHT)}
\label{alg:fht}
\begin{algorithmic}[1]
\Function{FHT}{$\A$} \Comment{Input: $n \times d$ matrix $\A=[\a_1, \ldots, \a_d]$.}
\For{$i = 1, 2, \ldots, d$}
\State Compute $\x_i \leftarrow \text{FHT}(\a_i[n/2:n] + \a_i[1:n/2])$ \Comment{Recursive call.}
\State Compute $\y_i \leftarrow \text{FHT}(\a_i[n/2:n] - \a_i[1:n/2])$ \Comment{Recursive call.}
\State Compute $\tilde{\a}_i \leftarrow [\x_i^\top, \y_i^\top]^\top$
\EndFor
\State \textbf{return} $[\tilde{\a}_1, \ldots, \tilde{\a}_d]$ \Comment{Computes $\H_n \A$.}
\EndFunction
\end{algorithmic}
\end{algorithm}

Denote $\mathcal{T}_{\text{FHT}}(n,d)$ as the FLOPs it takes to compute $\text{FHT}(\A)$ for $\A\in\R^{n \times d}$. Notice that if we set $d = 1$ in Algorithm~\ref{alg:fht}, then it recovers the vector version of FHT, in this case by recursion we have $\mathcal{T}_{\text{FHT}}(n, 1) = 2 \mathcal{T}_{\text{FHT}}(n/2, 1) + n = n \log n$. For standard matrix cases, we have $\mathcal{T}_{\text{FHT}}(n, d) = d \cdot \mathcal{T}_{\text{FHT}}(n, 1) = nd \log n$.

In \algpd\ (Algorithm~\ref{alg:bcd}), we need to compute $\Q \A \Q^\top$ for a symmetric matrix $\A$, where $\Q = \H_n\D$, for $\D = \frac1{\sqrt n}\diag(d_1,...,d_n)$ and $d_i$ are independent random $\pm 1$ signs (Rademacher variables). In order to construct this transformation, we can naively apply FHT twice to matrix $\D\A\D$ and have $\Q\A\Q^\top = \text{FHT}(\text{FHT}(\D\A\D)^\top)$. Notice that the cost of applying FHT to an $n \times n$ matrix is $n^2 \log n$, thus this naive method takes $2n^2 \log n$ FLOPs. However, this method does not take the symmetry of $\A$ into account. We improve on this with our proposed SymFHT (Algorithm~\ref{alg:symfht}).
\begin{proof}[Proof of Theorem~\ref{t:symFHT}]
Denote $\H \coloneqq \H_{n/2}$. Then, for symmetric matrix $\A$ we have the following:
\begin{align*}
\H_n \A \H_n = 
\begin{bmatrix}
\H & \H \\
\H & -\H
\end{bmatrix}
\begin{bmatrix}
\A_{11} & \A_{12} \\
\A_{12}^\top & \A_{22}
\end{bmatrix}
\begin{bmatrix}
\H & \H \\
\H & -\H
\end{bmatrix} = 
\begin{bmatrix}
\C_{11} + \C_{21} & \C_{12} + \C_{22} \\
\C_{12}^\top + \C_{22}^\top & \C_{12} - \C_{22}
\end{bmatrix}
\end{align*}
where matrices $\C_{11}, \C_{12}$ and $\C_{22}$ are given by 
\begin{align*}
\C_{11} = & ~ \H\big(\A_{11} + \A_{12}^\top \big)\H \quad\quad
\C_{12} = \H\big(\A_{11} - \A_{12} \big)\H\\
\C_{21} = & ~ \H\big(\A_{12} + \A_{22} \big)\H \quad\quad
\C_{22} = \H\big(\A_{12}^\top - \A_{22} \big)\H
\end{align*}
Thus if we pre-compute $\B_{11} \coloneqq \H\A_{11}\H$, $\B_{12} \coloneqq \H \A_{12}\H$, $\B_{22} = \H\A_{22}\H$, then
\begin{align*}
\C_{11} = & ~ \B_{11} + \B_{12}^\top \quad\quad
\C_{12} = \B_{11} - \B_{12} \\
\C_{21} = & ~ \B_{12} + \B_{22} \quad\quad
\C_{22} = \B_{12}^\top - \B_{22}
\end{align*}
We note that due to symmetry, we do not need to compute $\H\A_{21}\H = \H\A_{12}^\top \H$. When we compute $\B_{11}$ and $\B_{22}$, since both $\A_{11}$ and $\A_{22}$ are also symmetric, we can compute them recursively, i.e., $\B_{11} = \text{SymFHT}(\A_{11})$, $\B_{22} = \text{SymFHT}(\A_{22})$. However when we compute $\B_{12}$, since $\A_{12}$ is no longer symmetric, we can no longer use the same scheme; instead, we can apply standard FHT twice to this smaller matrix, i.e., $\B_{12} = \text{FHT}(\text{FHT}(\A_{12}^\top)^\top)$.
Notice that the costs for computing $\C_{11}, \C_{12}, \C_{21}$ and $\C_{22}$ are all $(n/2)^2 = n^2 / 4$. In addition, we need to compute $\C_{11} + \C_{21}, \C_{12} +\C_{22}$ and $\C_{12} - \C_{22}$. These sum up to $7n^2 / 4$ FLOPs. Thus, the cost of $\text{SymFHT}$ is governed by the following recursive inequality:
\begin{align*}
\mathcal{T}_{\text{SymFHT}}(n) \leq & ~ 2 \mathcal{T}_{\text{SymFHT}}(n/2) + 2 \mathcal{T}_{\text{FHT}}(n/2, n/2) + 7n^2 / 4 \\
\leq & ~ 2 \mathcal{T}_{\text{SymFHT}}(n/2) + 2(n/2)^2 \log(n/2) + 7n^2 / 4\\
= & ~ 2 \mathcal{T}_{\text{SymFHT}}(n/2) + 2(n/2)^2 (\log n + 2.5) \\
\leq & ~ \frac{1}{2} n^2 (2.5 + \log n) \cdot \left(1 + \frac{1}{2} + \frac{1}{2^2} + \cdots\right) \leq n^2 (2.5 + \log n)
\end{align*}
where we use the fact that $\mathcal{T}_{\text{FHT}}(n, n) = n^2 \log n$ and that $n \geq 4$. Compared to the na\"ive method, our algorithm $\text{SymFHT}$ reduces the FLOPs by about a half.
\end{proof}

\section{Further Numerical Experiments}
\label{s:further-experiments}
In this section we provide the details for our experimental setup, and we give the results for the test matrices and experiments not included in Section \ref{s:experiments}. We also carry out additional experiments evaluating the effect of Tikhonov regularization on the Kaczmarz projection steps. The code for our experiments is available at \url{https://github.com/EdwinYang7/kaczmarz-plusplus}.

First, we discuss the specifics of the construction of our test matrices. We consider two classes of test matrices. 

\paragraph{\textbf{1. Synthetic Low-Rank Matrices.}}
To validate our theories on the effect of the number of large outlying eigenvalues, we carry out experiments on synthetic benchmark matrices using the function \verb~make_low_rank_matrix~ from Scikit-learn \cite{scikit-learn}, which provides random matrices with a bell-shaped spectrum, motivated by data in computer vision and natural language processing. As specified in Scikit-learn, the singular value profile of a matrix $\A\in\R^{n_{\text{samples}}\times n_{\text{features}}}$ generated this way is: $(1-\verb~tail_strength~)\cdot\exp(-(i/\verb~effective_rank~)^2)$ for the top \verb~effective_rank~ singular values, and $\verb~tail_strength~\cdot\exp(i/\verb~effective_rank~)$ for the remaining ones.
We  choose parameter \verb~effective_rank~ among the four values $\{25, 50, 100, 200\}$. Note that parameter \verb~effective_rank~ is approximately the number of singular vectors required to explain most of the data by linear combinations. It can be viewed as ``the number of large singular values $k$'' in our theory, and is also roughly the number of steps needed for Krylov-type methods to construct a good Krylov subspace. Parameter \verb~tail_strength~, which captures the relative importance of the fat noisy tail of the spectrum, is set to $0.01$. 
 
For the task of testing \alg (Algorithm~\ref{alg:kzpp}), each of the test matrices is an $m \times n$ rectangular matrix $\A$ with $m=4096$ and $n=1024$ generated as above, and our task is solving a linear system $\A\x=\b$ where $\b$ is generated from the standard normal distribution. On the other hand, for the task of evaluating \algpd, we use \verb~make_low_rank_matrix~ to construct a $4096\times 4096$ matrix $\mPhi$, and then compute $\A=\mPhi\mPhi^\top$. We then solve a PSD linear system $(\A+\phi\I)\x=\b$, with standard normal $\b$ and $\phi=0.001$. Note that we choose $m$ and $n$ to be powers of $2$ simply for the convenience of implementing randomized Hadamard transform (RHT). In general this is not necessary, since we can still implement it by finding the closest power of $2$ larger than $m$ or $n$, enlarging the matrix to that dimension by padding with $0$ entries, and truncating back to the original dimension at the end.

\paragraph{\textbf{2. Kernel Matrices from Machine Learning.}}
To evaluate our algorithm in a practical setting that naturally exhibits large outlying eigenvalues, we consider applying a kernel transformation to four real-world datasets (Abalone and Phoneme from OpenML \cite{OpenML2013}, California\_housing and Covtype from Scikit-learn \cite{scikit-learn}). We truncate each dataset to its first $n=4096$ rows to get matrix $\mPhi \in \R^{n \times m}$. For $(i,j) \in [n] \times [n]$, we define the kernel matrix $\A \in \mathcal{S}_n^{+}$ so that $\A_{ij} = \mathcal{K}(\mPhi_i, \mPhi_j)$ where $\mPhi_i, \mPhi_j$ are the $i$-th and $j$-th rows of $\mPhi$, respectively, and $\mathcal{K}$ is a kernel function. We consider two types of kernel functions: Gaussian, $\mathcal{K}(\mPhi_i,\mPhi_j)=\exp(-\gamma\|\mPhi_i-\mPhi_j\|^2 )$, and Laplacian, $\mathcal{K}(\mPhi_i,\mPhi_j)=\exp(-\gamma\|\mPhi_i-\mPhi_j\|)$. For both choices, we set the width parameter $\gamma$  among two values, $\{0.1,0.01\}$. This leads to four different test matrices for each of the four datasets, giving a total of 16 matrices. For each matrix, we solve a PSD linear system of the form $(\A+\phi\I)\x=\b$ with standard normal $\b$ and $\phi=0.001$.

\subsection{Testing Projection, Acceleration and Memoization}
\label{s:experiment_acc}
In this section we test the effect of (i) computing the inner projection steps inexactly, (ii) the adaptive acceleration scheme and (iii) the block memoization technique used in our methods (see Figure~\ref{fig:lsqr_steps} and \ref{fig:accel-sample} in Section \ref{s:experiments}, as well as Figure \ref{fig:accel-part1} below). 
For all tasks, we used variants of \alg (Algorithm~\ref{alg:kzpp}) for solving rectangular linear systems. 

To test the the effect of computing the inner projection steps inexactly, we consider two variants of our method:
\begin{itemize}
    \item \texttt{K++(LSQR-X)} for \texttt{X}\,$\in\!\{2,4,8\}$: Algorithm~\ref{alg:kzpp} as given in the pseudocode, with the number of inner LSQR iterations $t_{\max}$ set to \texttt{X};
    \item \texttt{K++(Cholesky)}: \alg\ with exact projections, i.e., Algorithm~\ref{alg:kzpp} with function \texttt{Proj} replaced by a direct solve involving $\mathrm{chol}(\A_S\A_S^\top+\lambda\I)$.
\end{itemize}

We observed that $8$ inner iterations consistently suffices in attaining convergence that nearly matches \alg\ with exact projections. Thus, for the remaining experiments, we use $t_{\max}=8$ as the default. Next, we consider four variants of our method, as shown in Table~\ref{tab:kacz-variants}, to identify the effect of its different components individually. For clarity, we explain their differences below.

\begin{itemize}
    \item \texttt{Kaczmarz}: The classical randomized block Kaczmarz method, without acceleration or block memoization, but still preprocessed with RHT;
    \item \algmemo: Randomized Kaczmarz with block memoization, but without adaptive acceleration, i.e., Algorithm~\ref{alg:kzpp} without lines \ref{kzpp:update_start}-\ref{kzpp:update_end}, and with $\eta = 0$ (as opposed to $\eta = \frac{s}{2n}$), meaning that we no longer maintain the adaptive momentum term $\m_t$ from line \ref{kzpp:momentum};
    \item \algaccel: Randomized Kaczmarz with adaptive acceleration, but without block memoization, i.e.,  Algorithm~\ref{alg:kzpp} modified so that  in line \ref{kzpp:sampling} we always choose to sample a new block $S\sim {[m]\choose s}$, and hence, do not save the Cholesky factors;
    \item \texttt{Full K++}: Algorithm~\ref{alg:kzpp} as given in the pseudocode, equipped with both adaptive acceleration and block memoization.
\end{itemize}

\begin{algorithm}[!ht]
\caption{\alg: Kaczmarz type solver for general systems}
\label{alg:kzpp}
\begin{algorithmic}[1]
\State \textbf{Input:} $\A\in\R^{m\times n}$, $\b \in \R^m$, block size $s$, iterate $\x_0$, regularization $\reg$, inner iterations $t_{\max}$, tolerance $\epsilon$;
\State Sample $\D\leftarrow \frac1{\sqrt m}\diag(d_1,...,d_m)$\text{ for }$d_i\sim\mathrm{Rademacher}$;
\State $\A\leftarrow \FHT(\D\A)$, $\b\leftarrow \FHT(\D\b)$;
\Comment{{\footnotesize Preprocessing with RHT.}}
\State Initialize $\m_0 \leftarrow \mathbf{0}$, $\rho\leftarrow 0$, $\eta\leftarrow \frac s{2n}$, $\Bc\leftarrow \emptyset$, $\zeta\leftarrow \lceil m/s\rceil$, $\Ec_0,\Ec_1\leftarrow 0$, $\tau\leftarrow 2s$;
\For{$t=0,1,...$}
\State \textbf{if} Bernoulli$\big(\min\{\,1,\ \frac1t\cdot \frac ms\log m, \frac1t\cdot \frac ns\log m\,\}\big)$ \label{kzpp:sampling}
\textbf{then}
\State \quad\ $\Bc \leftarrow \Bc \cup\{S\}$ \text{for} $S\sim {[m]\choose s}$; \Comment{{\footnotesize Sample new subset.}}
\State \textbf{else} $S\sim \Bc$; \textbf{end if} 
\State $\r_t \leftarrow \A_S\x_t-\b_S$;\Comment{{\footnotesize Use for error estimation.}}
\State $\w_t \leftarrow \mathsf{Proj}(\A, S, \Bc, \r_t, \tau, \lambda, t_{\max})$; \Comment{{\footnotesize Inner LSQR solver.}}
\State $\m_{t+1} \leftarrow \frac{1-\rho}{1+\rho}\big(\m_t - \w_t\big)$;\Comment{{\footnotesize Adaptive momentum.}} \label{kzpp:momentum}
\State $\x_{t+1} \leftarrow \x_t - \w_t + \eta\,\m_{t+1}$;
\State \textbf{if} $t<\zeta \mod 2\zeta$ \textbf{then} $\Ec_0 \leftarrow \Ec_0+\|\r_t\|^2$; \textbf{else} $\Ec_1 \leftarrow \Ec_1+\|\r_t\|^2$; \label{kzpp:update_start}
\If{$t = 2\zeta-1 \mod 2\zeta$}
\State \textbf{if} $\Ec_1\leq \epsilon^2\|\b\|^2$ \textbf{then} \textbf{return $\x_{t+1}$}; \Comment{{\footnotesize Stopping criterion.}}
\State $r\leftarrow r a_t + (\Ec_1/\Ec_0) (1-a_t)$; \Comment{{\footnotesize Weighted average \eqref{eq:weighted}.}}
\State $\rho\leftarrow 1 - r^{1/\zeta}$;
\Comment{{\footnotesize Convergence rate estimate.}}
\State $\Ec_0,\Ec_1\leftarrow 0$;
\EndIf  \label{kzpp:update_end}
\EndFor
\end{algorithmic}
\end{algorithm}

\begin{algorithm}[!ht]
\caption{Preconditioned LSQR solver $\mathsf{Proj}(\A, S, \Bc, \r, \tau, \lambda, t_{\max})$}
\begin{algorithmic}[1]
\State \textbf{Input: }$\A_S \in \R^{s \times n}, \r \in \R^s$, sketch size $\tau$, regularization $\lambda$, max iterations $t_{\max}$;
\State \textbf{if} $S\not\in \Bc$ \textbf{then}
\State \quad $\hat{\A} \leftarrow\A_S \mPi^\top$; \Comment{{\footnotesize $\mPi \in \R^{\tau \times n}$ is a sketching matrix.}}
\State \quad $\Rb[S] \leftarrow \textrm{chol}(\hat{\A} \hat{\A}^\top+\lambda\I)$; 
\Comment{{\footnotesize Save Cholesky factor.}}
\State \textbf{end if}
\State $\M \leftarrow \Rb[S]^{-\top}\big[\A_S\ \sqrt\lambda\I\big],\ \ \b \leftarrow \Rb[S]^{-\top}\r_t$;
\Comment{{\footnotesize Preconditioned system (implicit).}}
\State $\x \leftarrow \textrm{LSQR}(\M, \b, t_{\max})$;
\State \textbf{Return} $\w \leftarrow \x_{1:n}$
\end{algorithmic}
\end{algorithm}

Throughout this section, we set the block size $s$ to be $\{100, 200\}$, and measure the convergence through the residual error defined as $\epsilon_t \coloneqq\|\A \x_t - \b\| / \|\b\|$. For each plot we run all methods $10$ times and take an average.

By comparing the curves, we can see that in all cases, adding block memoization slightly worsens the convergence rate (by comparing \texttt{Kaczmarz} and \algmemo, or \algaccel and \texttt{Full K++}). This is actually suggested by theory, since with block memoization we are essentially not sampling all possible blocks of coordinates, thus reducing the quality of the block sampling distribution. However, this phenomenon is very insignificant especially when we compare \algaccel with \texttt{Full K++}, which suggests that our online block selection scheme (Section~\ref{s:block_memoization}) works well, and that $\tilde O(\min\{m,n\}/s)$ blocks are sufficient for fast convergence, matching our theory. The computational benefits of block memoization are observed in the FLOPs experiments (Figure \ref{fig:flops-rest}), once we plot the convergence in terms of arithmetic operations, taking advantage of the saved Cholesky factors.

From the plots, we can also see that the adaptive acceleration plays an important role for both Kaczmarz and Kaczmarz with block memoization, showing a comparable improvement for both cases. We also observe that the effect of adaptive acceleration is more significant when the sketch size $s$ is smaller - this also aligns with the theory, since for smaller $s$ the tail Demmel condition number $\bar{\kappa}_s$ is larger, thus the effect of acceleration reducing the dependence on $\bar{\kappa}_s$ from second to first power is also more significant.

\begin{figure}
\begin{center}
\includegraphics[width=0.94\linewidth]{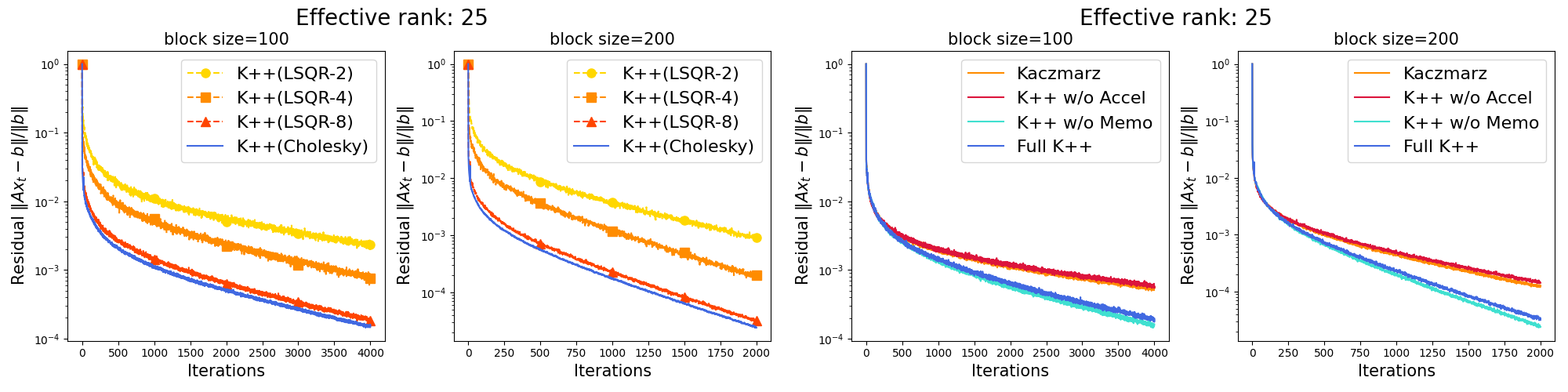}
\includegraphics[width=0.94\linewidth]{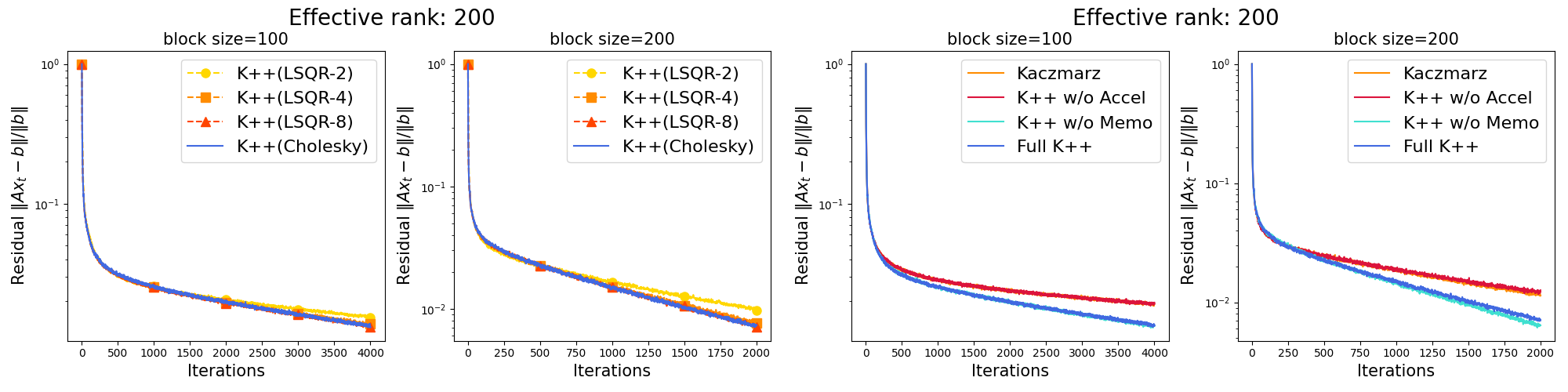}
\end{center}
\vspace{-5mm}
\caption{Convergence plots testing inexact projection steps and different variants of \textnormal{\algshort} (see Table \ref{tab:kacz-variants}), using two block sizes (continuation of Figure \ref{fig:lsqr_steps} and \ref{fig:accel-sample} with effective ranks 25 and 200).}
\label{fig:accel-part1}
\end{figure}

\subsection{Comparison with Krylov Subspace Methods}
\label{s:experiment_flops}

Next, we evaluate the computational cost of our algorithms on kernel matrices based on benchmark machine learning data. Since these are PSD linear systems, we evaluate the convergence of \texttt{Full CD++}, alongside \texttt{CD++ w/o Memo} and \texttt{CD++ w/o RHT}, comparing against Krylov-type methods, conjugate gradient (CG) and GMRES. In this experiment we count the FLOPs it takes to converge for each method, to showcase the advantages of our methods compared to Krylov-type methods.

\begin{figure}[ht]
\centering
\includegraphics[width=0.94\linewidth]{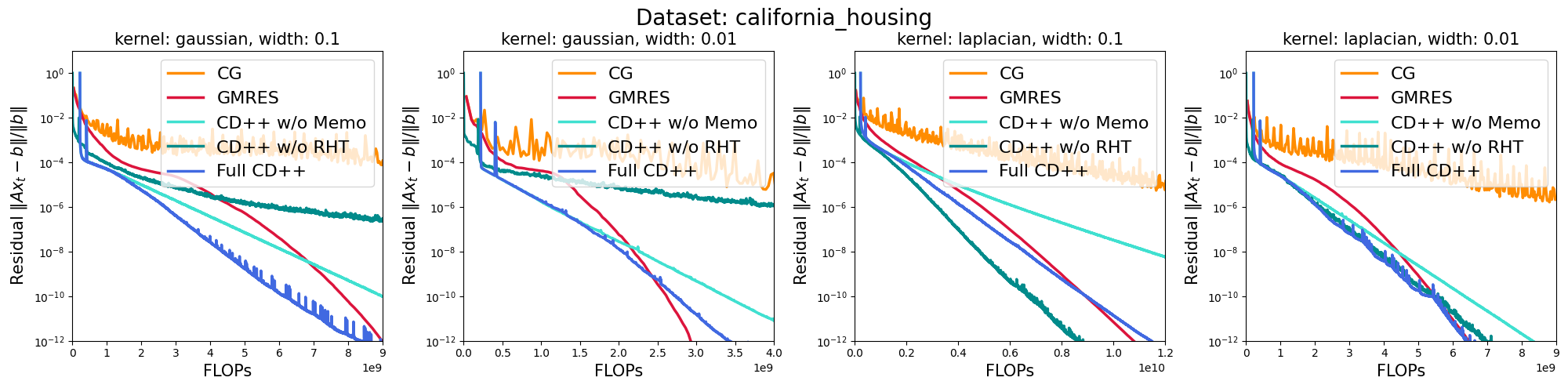}
\includegraphics[width=0.94\linewidth]{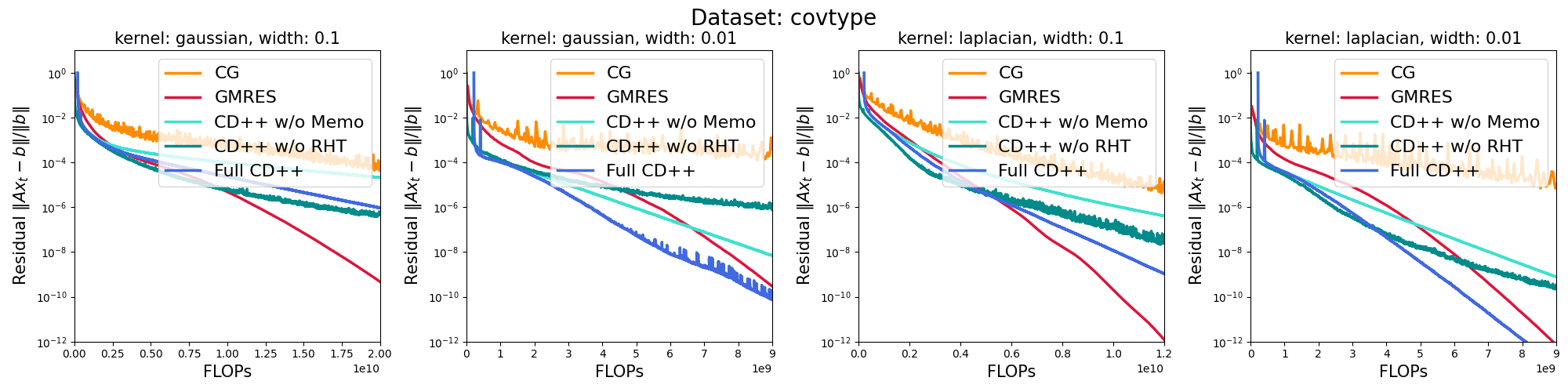}
\includegraphics[width=0.94\linewidth]{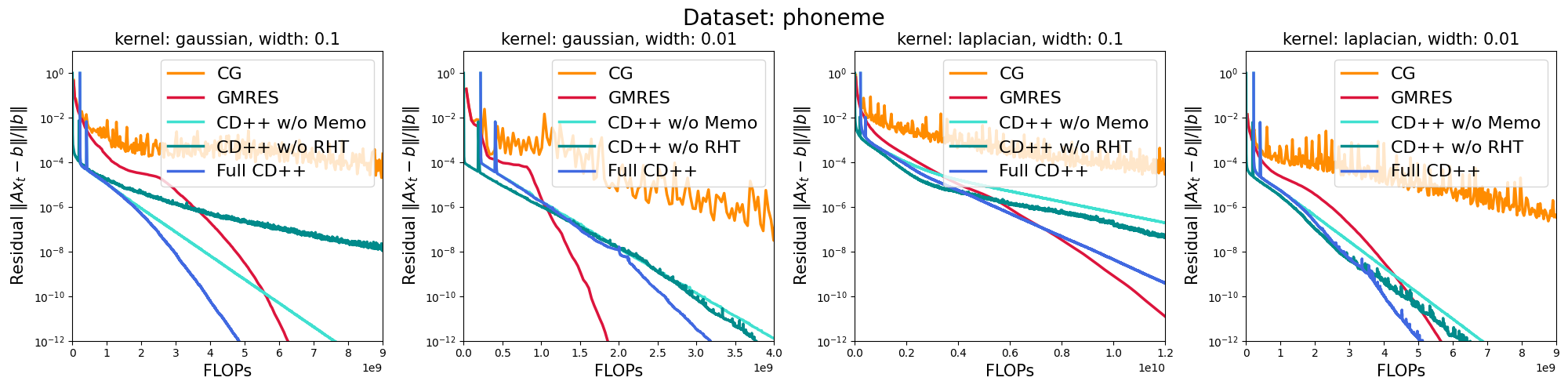}
\includegraphics[width=0.94\linewidth]{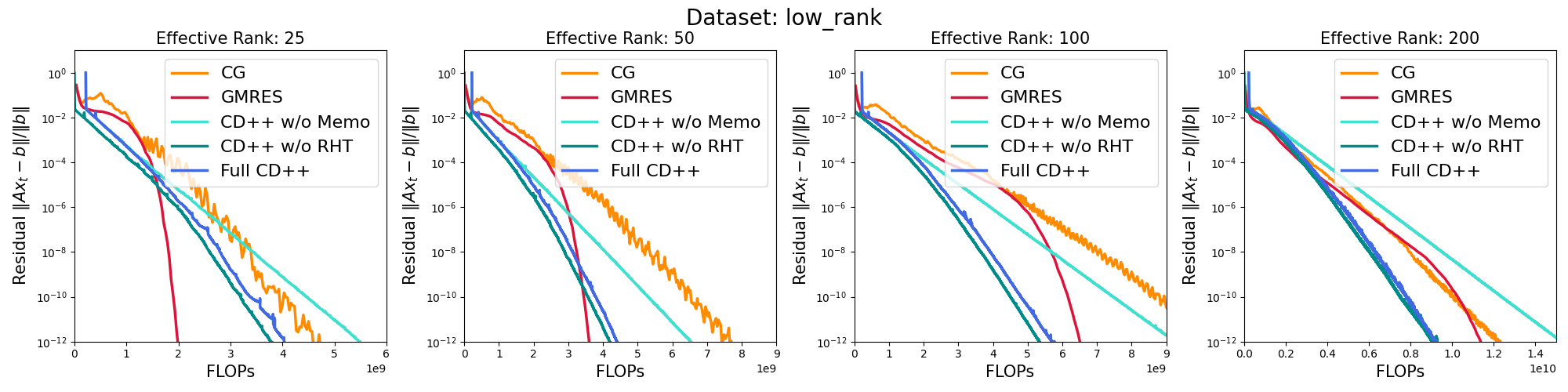}
\vspace{-2mm}
\caption{Computational cost comparison, measuring floating point operations (FLOPs) against the normalized residual error \eqref{eq:residual-error} for \texttt{\emph{Full CD++}}, alongside baselines \texttt{\emph{CD++ w/o Memo}}, CG, and GMRES, and also \texttt{\emph{CD++ w/o RHT}} (continuation of Figure \ref{fig:flops-sample}).}
\label{fig:flops-rest}
\end{figure}

For the FLOPs of CG, we look into the source code from \href{https://github.com/scipy/scipy/blob/v1.14.1/scipy/sparse/linalg/_isolve/iterative.py#L283-L388}{scipy.sparse.linalg} and approximate it by $2n^2 + 11n$ per iteration. For the FLOPs counting of GMRES, we look into the source code from \href{https://pyamg.readthedocs.io/en/latest/_modules/pyamg/krylov/_gmres.html#gmres}{pyamg.krylov} and approximate it by $2n^2T + 4n T (T + 1)$, where $T$ is the number of iterations. For the FLOPs counting of our methods (\texttt{Full CD++}, \texttt{CD++ w/o Memo}, \texttt{CD++ w/o RHT}), we maintain a counter for FLOPs for each iteration.  which includes both the cases of using block memoization (\texttt{Full CD++}, \texttt{CD++ w/o RHT}) and not using it (\texttt{CD++ w/o Memo}). Specifically, for the algorithms with block memoization, if a new block is sampled, then we do the Cholesky factorization and increase the FLOPs by $s^3/3$, where $s$ is the block size; if we sample from the already sampled set of blocks $\Bc$, then Cholesky factorization is not needed and this cost is omitted. Moreover, for \texttt{CD++ w/o Memo} and \texttt{Full CD++}, there is a pre-processing step of applying the RHT which takes extra FLOPs at the beginning (this is omitted from \texttt{CD++ w/o RHT}). We count them by following Appendix~\ref{s:symfht}, and this is reflected in the plots, since the convergence curves of our methods are shifted by the cost of the RHT. Here, we take advantage of our fast SymFHT implementation (Algorithm \ref{alg:symfht}), which reduces that cost by half. Throughout the section, we choose the block size $s = 200$, which tends to work well for the size of our test matrices. Further optimizing the block size, or choosing it dynamically, is an interesting direction for future work. We run each algorithm $5$ times and take average to reduce the noise.

From the experiments (Figure \ref{fig:flops-rest}), we can see that CG converges very slowly and is not comparable with other methods. 
By comparing \texttt{CD++ w/o Memo} and \texttt{Full CD++}, we can see that the block memoization technique gives a significant improvement in FLOPs, since \texttt{Full CD++} successfully reduces the expensive Cholesky factorization step. This improvement is more significant if we are running more iterations (i.e., aiming for higher accuracy). By comparing \texttt{Full CD++} and GMRES, we can see that in most cases \texttt{Full CD++} performs better in the ``low to medium accuracy level'', while GMRES sometimes beats \texttt{Full CD++} in the ``high accuracy level''. 

For example, for the abalone dataset with Gaussian kernel with width=$0.01$, GMRES starts to perform better after the residual reaches $10^{-7}$. However, we note that for abalone, phoneme and california\_housing dataset with Gaussian kernel with width=$0.1$, or with Laplacian kernel with width=$0.01$, our \texttt{Full CD++} outperforms GMRES even when the residual reaches $10^{-12}$. These phenomena are dependent on the spectrum (especially the top eigenvalues) of matrix $\A + \phi \I$, which is reflected in the different choices of kernel and width. 

\begin{table}[]
\centering
\footnotesize
\resizebox{\textwidth}{!}{
\begin{tabular}{|c|c|c|cc|cc|cc|cc|}
\hline
\multicolumn{3}{|c|}{\textbf{Problem}} & \multicolumn{2}{c|}{\textbf{CG}} & \multicolumn{2}{c|}{\textbf{GMRES}} & \multicolumn{2}{c|}{\textbf{\texttt{Full CD++}}} & \multicolumn{2}{c|}{\textbf{\texttt{CD++ w/o RHT}}} \\
\hline
\textbf{Dataset} & \textbf{Kernel} & \textbf{Width} & \textbf{1e-4} & \textbf{1e-8} & \textbf{1e-4} & \textbf{1e-8} & \textbf{1e-4} & \textbf{1e-8} & \textbf{1e-4} & \textbf{1e-8} \\
\hline\hline

\multirow{4}{*}{\textbf{Abalone}}
& \multirow{2}{*}{\textbf{Gaussian}} & 0.1 & 7.63e9 & 3.06e10 & 1.47e9 & 5.17e9 & \textbf{4.64e8} & \textbf{3.26e9} & 6.68e8 & 8.97e9 \\
\cline{3-11}
& & 0.01 & 1.88e9 & 6.89e9 & 8.50e8 & \textbf{1.75e9} & \textbf{2.97e8} & 2.11e9 & 4.04e8 & 2.71e9 \\
\cline{2-11}
& \multirow{2}{*}{\textbf{Laplacian}} & 0.1 & 8.64e9 & 3.22e10 & 2.72e9 & 8.14e9 & {2.22e9} & {8.13e9} & {\textbf{1.74e9}} & {\textbf{6.06e9}} \\
\cline{3-11}
& & 0.01 & 1.11e9 & 1.37e10 & 5.41e8 & 4.11e9 & {2.40e8} & {3.09e9} & {\textbf{2.20e7}} & {\textbf{3.06e9}} \\
\hline

\multirow{4}{*}{\textbf{Phoneme}}
& \multirow{2}{*}{\textbf{Gaussian}} & 0.1 & 7.80e9 & 3.29e10 & 1.79e9 & 4.98e9 & {4.86e8} & \textbf{3.23e9} & {\textbf{4.61e8}} & 1.00e10 \\
\cline{3-11}
& & 0.01 & 3.70e8 & 4.23e9 & 3.37e8 & \textbf{1.33e9} & {2.23e8} & 1.80e9 & {\textbf{4.39e6}} & 1.79e9 \\
\cline{2-11}
& \multirow{2}{*}{\textbf{Laplacian}} & 0.1 & 7.22e9 & 3.73e10 & 2.29e9 & \textbf{8.46e9} & {1.65e9} & 8.96e9 & {\textbf{1.35e9}} & 1.31e10 \\
\cline{3-11}
& & 0.01 & 2.49e9 & 1.41e10 & 7.81e8 & 3.97e9 & {2.80e8} & {3.10e9} & {\textbf{7.47e7}} & {\textbf{3.01e9}} \\
\hline

\multirow{4}{*}{\textbf{\shortstack{California \\ Housing}}}
& \multirow{2}{*}{\textbf{Gaussian}} & 0.1 & 2.45e9 & 3.27e10 & 1.02e9 & 6.14e9 & \textbf{2.40e8} & \textbf{3.88e9} & {3.69e8} & 9.74e9 \\
\cline{3-11}
& & 0.01 & 2.15e9 & 9.51e9 & 5.75e8 & 2.18e9 & \textbf{2.27e8} & \textbf{1.99e9} & {3.64e8} & 7.13e9 \\
\cline{2-11}
& \multirow{2}{*}{\textbf{Laplacian}} & 0.1 & 5.44e9 & 2.34e10 & 2.04e9 & 6.85e9 & {1.52e9} & {6.39e9} & {\textbf{1.24e9}} & {\textbf{4.56e9}} \\
\cline{3-11}
& & 0.01 & 1.31e9 & 1.36e10 & 5.75e8 & 4.15e9 & {2.40e8} & {3.06e9} & {\textbf{4.39e7}} & \textbf{3.45e9} \\
\hline

\multirow{4}{*}{\textbf{Covtype}}
& \multirow{2}{*}{\textbf{Gaussian}} & 0.1 & 1.65e10 & 5.55e10 & {4.87e9} & \textbf{1.70e10} & 5.71e9 & 3.64e10 & {\textbf{3.63e9}} & 3.11e10 \\
\cline{3-11}
& & 0.01 & 9.31e9 & 3.87e10 & 2.00e9 & 7.69e9 & \textbf{9.78e8} & \textbf{5.66e9} & {1.26e9} & 1.22e10 \\
\cline{2-11}
& \multirow{2}{*}{\textbf{Laplacian}} & 0.1 & 6.79e9 & 2.24e10 & 3.23e9 & \textbf{8.34e9} & {2.85e9} & 1.03e10 & {\textbf{1.88e9}} & 8.61e9 \\
\cline{3-11}
& & 0.01 & 3.60e9 & 2.37e10 & 1.23e9 & 5.95e9 & {4.60e8} & \textbf{4.56e9} & {\textbf{3.43e8}} & {5.18e9} \\
\hline\hline

\multirow{4}{*}{\textbf{\shortstack{Synthetic \\ Low-Rank}}}
& \multicolumn{2}{c|}{Effective rank $= 25$} & 1.68e9 & 3.26e9 & 1.44e9 & \textbf{1.83e9} & {1.31e9} & 2.79e9 & {\textbf{1.11e9}} & 2.44e9\\
\cline{2-11}
& \multicolumn{2}{c|}{Effective rank $= 50$} & 2.59e9 & 5.17e9 & 2.43e9 & 3.26e9 & {1.53e9} & {3.21e9} & {\textbf{1.34e9}} & {\textbf{2.92e9}}\\
\cline{2-11}
& \multicolumn{2}{c|}{Effective rank $= 100$} & 3.16e9 & 6.89e9 & 2.65e9 & 5.67e9 & \textbf{1.91e9} & \textbf{3.89e9} & {1.94e9} & {4.16e9} \\
\cline{2-11}
& \multicolumn{2}{c|}{Effective rank $= 200$} & 3.26e9 & 8.00e9 & {2.75e9} & 8.34e9 & 2.92e9 & {6.10e9} & {\textbf{2.69e9}} & {\textbf{5.77e9}} \\
\hline
\end{tabular}
}
\vspace{1mm}
\caption{Comparison of the FLOPs needed to achieve given error threshold, $\epsilon \in \{10^{-4},10^{-8}\}$, for different algorithms. Bold values indicate the best performance for a given error threshold. The last column is included to test the effect of RHT preprocessing.}
\vspace{-2mm}
\label{tab:flops}
\end{table}

In Table~\ref{tab:flops} we show the FLOPs it takes for different methods (CG, GMRES, \texttt{Full CD++}, and \texttt{CD++ w/o RHT}) to achieve the given accuracy in detail. Here we set $\epsilon=10^{-4}$ as the mid-level accuracy and $\epsilon=10^{-8}$ as the high-level accuracy. For mid-level accuracy, we can see that \texttt{Full CD++} outperforms GMRES in 18 out of 20 tasks, showing that our method converges very fast at early stages. For high-level accuracy, we can see that \texttt{Full CD++} still outperforms GMRES at 14 out of 20 tasks. 

\paragraph{\textbf{Effect of RHT.}}
We also evaluate the effect of RHT as a preprocessing step (suggested by theory) on the total cost of our algorithm by comparing \texttt{CD++ w/o RHT} and \texttt{Full CD++}. Note that the only difference here is whether or not to use RHT, while the uniform sampling scheme remains the same. By comparing these two curves in Figure~\ref{fig:flops-rest}, we can see that the RHT step itself is very cheap, and can be measured by how much the beginning of the convergence curve of \texttt{Full CD++} is shifted away from 0 (given that the size of the matrices is $4096\times 4096$, the cost of the RHT step is approximately 2.4e8 FLOPs). This cost is almost negligible in most cases, however, it is noteworthy that in some cases (e.g., Abalone dataset with Laplacian kernel with width 0.01), the cost of RHT dominates the cost of the algorithm when converging to mid-level accuracy. Among all the $20$ test matrices (in Figure~\ref{fig:flops-sample} and \ref{fig:flops-rest}), RHT provides a significant gain in overall computational performance in 6 cases, and a smaller gain in 3 others. Despite this, even without RHT preprocessing, \texttt{CD++ w/o RHT} still exhibits good convergence for most of the problems and is comparable with GMRES, indicating that our algorithm can be effective even for sparse linear systems, where RHT is not desirable.

To conclude, the experiments in this section show that in the measurement of FLOPs, our \algpd outperforms Krylov methods including CG and GMRES in almost all mid-level accuracy tasks, as well as most high-level accuracy tasks. We also show the feasibility of discarding RHT preprocessing step in our algorithm when the input matrix is sparse.

\subsection{Testing Regularization in Projection}\label{s:experiments-regularization}
In this section, we test the effect of explicitly adding regularization to the projection step in \alg/\algpd\ (parameter $\lambda$ in Algorithm \ref{alg:bcd}). We test this experiments on our \texttt{Full CD++} method with $\lambda = \{\text{1e-2}, \text{1e-4}, \text{1e-6}, \text{1e-8}, \text{1e-10}, \text{0}\}$, where in the case of $\lambda = \text{1e-8}$, this recovers the \texttt{Full CD++} used in the remaining experiments. As we can see in Figure \ref{fig:regularization} adding this regularization term does not have a significant effect on the convergence rate of \texttt{Full CD++} (we do not include the plots for the remaining 3 real-world datasets, since they all show the same phenomena). This shows that adding regularization does not sacrifice the convergence rate. Recall that regularization has the benefit of making the computation of the Cholesky factors more stable: in the case where we solve a positive semidefinite (i.e., $\phi=0$) linear system the Cholesky factorization step can be potentially numerically unstable if the block matrix $\A_{S,S}$ is singular. Thus, we recommend using \algpd with a small but positive $\lambda$ to ensure numerical stability.

\begin{figure}[ht]
\includegraphics[width=0.94\linewidth]{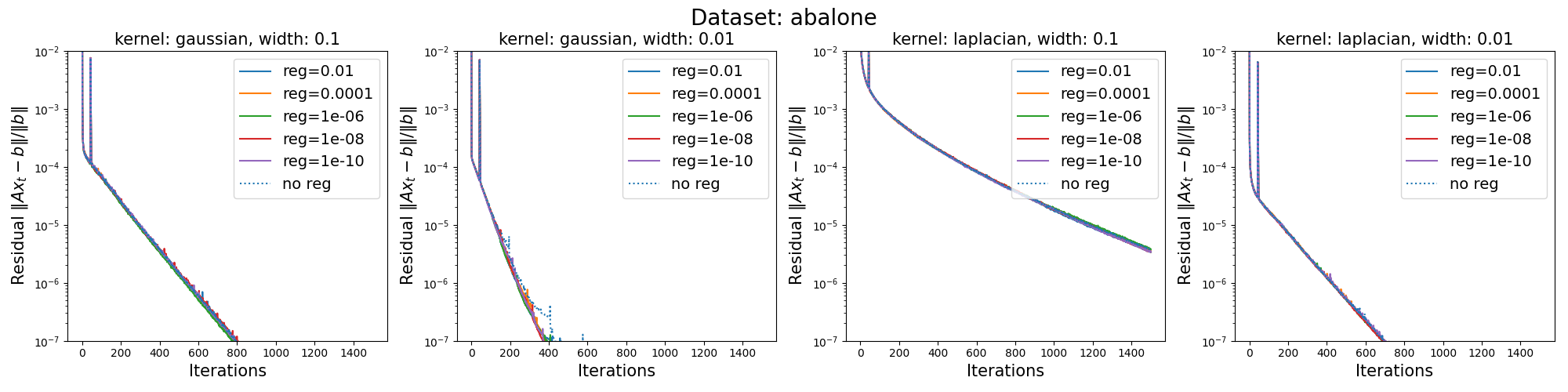}
\includegraphics[width=0.94\linewidth]{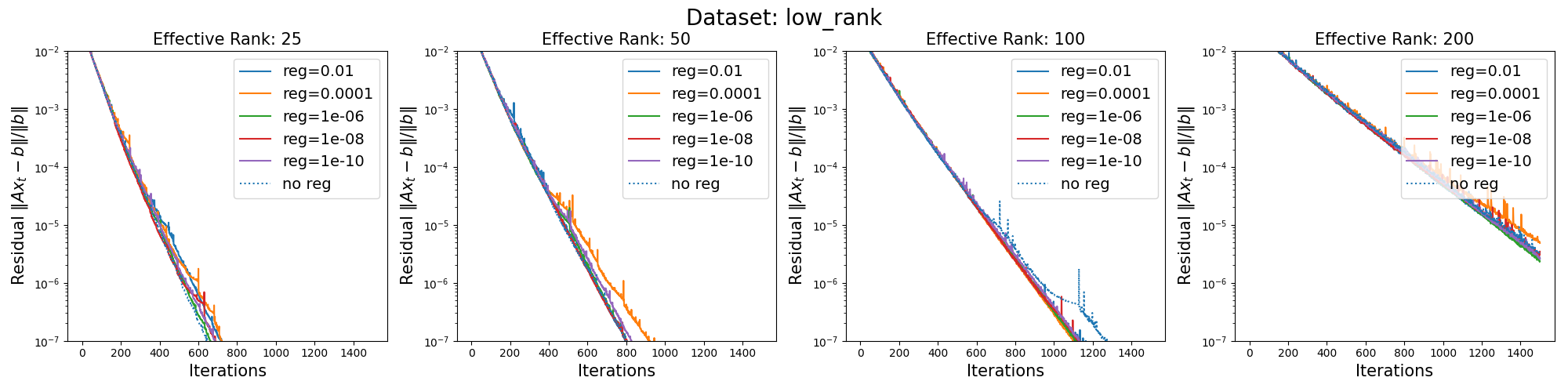}
\caption{Convergence plots showing the stability of \algpd\ with respect to the choice of Tikhonov regularization parameter $\lambda$ in the inner step of \algpd. Check \url{https://github.com/EdwinYang7/kaczmarz-plusplus} for plots on remaining real-world datasets.}
\label{fig:regularization}
\end{figure}

\end{document}